\documentclass[10pt, A4, leqno, showkeys]{amsart} 

\usepackage{graphicx} 

      \usepackage{mathtools,amssymb} 
      \usepackage{amsthm}
      \usepackage{amsfonts}
  \usepackage{url} 
  \usepackage{bm} 
  \usepackage{mathrsfs} 
  \mathtoolsset{showmanualtags} 
    \usepackage{tikz}
    \usetikzlibrary{intersections,calc,arrows}
    \usepackage[inline]{enumitem} 
    \setlist[enumerate]{font=\normalfont, leftmargin = *}
    \usepackage{comment} 
    \usepackage{ifthen} 
    \usepackage{array} 
    \usepackage{cite} 
    \usepackage{hhline} 
    \usepackage{makecell} 

  \newcommand{\paren}[1]{(#1)}
    \newcommand{\cparen}[1]{{\left\{#1\right\}}}
    \newcommand{\sparen}[1]{{\left[#1\right]}}
    \newcommand{\abs}[1]{{\left\lvert#1\right\rvert}}
    
    \bmdefine{\bzero}{0} 
    \bmdefine{\be}{e} 
    \bmdefine{\bv}{v} 
    \bmdefine{\bu}{v} 
    \bmdefine{\bw}{w} 
    \newcommand{\pfrac}[2]{{\frac{\partial #1}{\partial #2}}}
    
    \newcommand{\sfrac}[2]{{\!\left. #1 \middle\slash #2 \right.\!}}
    \newcommand{\lup}[2]{{{{\vphantom{#2}}^{#1}}{#2}}}
    \newcommand{\trans}[1]{{\lup{t}{#1}}}

    \newcommand{\rel}[1]{\mathrel{}#1\mathrel{}} 
    \newcommand{\relmiddle}[1]{\rel{\middle#1}} 
    \newcommand{\mvert}{\relmiddle{\vert}} 
    \newcommand{\smallo}[1]{{o (#1)}}
    
    \newcommand{\Span}[2][]{\operatorname{span}_{#1} (#2)}
    \newcommand{\notni}{\not\ni}
    
    \newcommand{\sgn}{\operatorname{sgn}}

    \newcommand{\id}[1][]{{\mathrm{id}_{#1}}}
    \newcommand{\GL}[2][]{{\operatorname{GL}_{#1} \paren{#2}}}
    
    \newcommand{\restrict}[2]{{\! {\left. {#1} \right\rvert}_{#2}}}
    
      \renewcommand{\phi}{\varphi}
      \renewcommand{\epsilon}{\varepsilon}

    \newcommand{\quatext}[1]{{\quad\text{#1}\quad}}
    \usepackage{mleftright}
    \mleftright
    \newcommand{\mathemph}[1]{{\text{\emph{$#1$}}}}

    \newcommand{\Cusp}[1]{\ensuremath{C_{(#1)}}}
    \newcommand{\A}{\ensuremath{\mathcal{A}}}
    \newcommand{\cL}{\ensuremath{\mathcal{L}}}
    \newcommand{\Aqui}{\A}
      \newcommand{\deriv}[2]{{#1}^{\paren{#2}}}
      \newcommand{\sderiv}[2]{{#1}^{\sparen{#2}}}
      
        \bmdefine{\zero}{0}
        
        \newcommand{\R}{\bm{R}}
        \newcommand{\Z}{\bm{Z}}
        \bmdefine{\V}{V}
        \bmdefine{\bbR}{R}
        \bmdefine{\bbZ}{Z}
        \newcommand{\poZ}{\Z_{> 0}}
        \newcommand{\nnZ}{\Z_{\geq 0}}
        \newcommand{\poR}{\R_{> 0}}
        
        \newcommand{\geZ}[1]{\Z_{\geq #1}}
    \renewcommand{\hat}[1]{\widehat{#1}}
    \renewcommand{\tilde}[1]{\widetilde{#1}}
    
    \newcommand{\nscite}[1]{\!\!\cite{#1}}
    \newcommand{\rscite}[2]{\cite[#1]{#2}}
    \newcommand{\rcite}[2]{\!\!\rscite{#1}{#2}}
        
        \newcommand{\Fcns}[3]{\mathcal{F}^{#1}_{#2} (#3)}
        \newcommand{\cR}[2]{\mathcal{R}^{#1} (#2)}
        \newcommand{\cNR}[2]{\mathcal{NR}^{#1} (#2)}

      \newcommand{\Rpdf}[1][N]{\texorpdfstring{\ensuremath{\R^{#1}}}{\ensuremath{R^{#1}}}}
      \newcommand{\Apdf}{\texorpdfstring{\ensuremath{\mathcal{A}}}{\ensuremath{A}}}
      \renewcommand{\emptyset}{\varnothing}
      \newcommand{\denu}[2]{d_{#1} \paren{#2}}
      \newcommand{\Matrices}[2]{M_{#1} \paren{#2}}
      \newcommand{\parts}[1]{\mathcal{P}_{#1}}
      
    \newcounter{num}
    
    \newcommand{\rnum}[1]{{\setcounter{num}{#1}\roman{num}}}
      \usepackage{caption} 
      \usepackage{subcaption}
      \usepackage{graphicx}
      \usepackage{fancybox} 
    \makeatletter
    \renewcommand{\p@enumii}{}
    \renewcommand{\p@enumiii}{}
    \makeatother

  \usepackage{hyperref}
  \hypersetup{
    colorlinks=true,       
    linkcolor=blue,        
    citecolor=blue,       
    filecolor=blue,     
    urlcolor=magenta          
  }
  \hypersetup{
    linktoc=all,         
    bookmarksnumbered=true 
  }

    \usepackage[capitalize,nameinlink,nosort]{cleveref}
    \usepackage{autonum}
    \usepackage{aliascnt}
    \theoremstyle{plain} 
    \newtheorem{thm}{Theorem}[section] 
        \newaliascnt{defn}{thm}
        \newaliascnt{lem}{thm}
        \newaliascnt{cor}{thm}
        \newaliascnt{prop}{thm}
        \newaliascnt{exm}{thm}
        \newaliascnt{rem}{thm}
        \newaliascnt{fact}{thm}
          \newtheorem{fact}[fact]{Fact}
          \newtheorem{cor}[cor]{Corollary}
          \newtheorem{lem}[lem]{Lemma}
          \newtheorem{prop}[prop]{Proposition}
        \theoremstyle{definition}
          \newtheorem{defn}[defn]{Definition}
          
          \newtheorem{rem}[rem]{Remark}
          \newtheorem*{acknowledgements}{Acknowledgements}
        \theoremstyle{remark}
        \aliascntresetthe{defn}
        \aliascntresetthe{lem}
        \aliascntresetthe{cor}
        \aliascntresetthe{prop}
        \aliascntresetthe{exm}
        \aliascntresetthe{rem}
        \aliascntresetthe{fact}

    \crefname{equation}{}{}
    \crefname{enumi}{}{}
    \crefname{enumii}{}{}
    \crefname{enumiii}{}{}
    \crefname{thm}{Theorem}{Theorems}
    \crefname{defn}{Definition}{Definitions}
    \crefname{lem}{Lemma}{Lemmas}
    \crefname{cor}{Corollary}{Corollaries}
    \crefname{prop}{Proposition}{Propositions}
    \crefname{exm}{Example}{Examples}
    \crefname{rem}{Remark}{Remarks}
    \crefname{prblm}{Problem}{Problems}
    \crefname{footnote}{Footnote}{Footnotes}
    \crefname{claim}{Claim}{Claims}
    \crefname{fact}{Fact}{Facts}
    \crefname{section}{Section}{Sections}
    \crefname{subsection}{Subsection}{Subsections}
      \crefname{figure}{Figure}{Figures}
      \crefname{table}{Table}{Tables}
      \crefrangeformat{equation}{#3#1#4--#5#2#6}
      \crefrangeformat{enumi}{#3#1#4--#5#2#6}
      \crefrangeformat{enumii}{#3#1#4--#5#2#6}
      \crefrangeformat{enumiii}{#3#1#4--#5#2#6}

  \numberwithin{equation}{section}

\title[Curves of Finite Multiplicities in $\R^N$]
{Criteria and Curvatures for Singularities of Finite Multiplicities of Curves in $\boldsymbol{R}^N$}

\author{Jun Matsumoto}
\address[Jun Matsumoto]{ 
Department of Mathematics, \endgraf
Institute of Science Tokyo, \endgraf
O-okayama, Meguro, Tokyo, 152-8551, Japan
}
\email{j.matsumoto.517@gmail.com}

\author{Shuki Sano}
\address[Shuki Sano]{
Department of Mathematics, \endgraf
Institute of Science Tokyo, \endgraf
O-okayama, Meguro, Tokyo, 152-8551, Japan
}
\email{sano.s.7465@m.isct.ac.jp}

\author{Kiyoto Yanagida}
\address[Kiyoto Yanagida]{
Department of Mathematics, \endgraf
Institute of Science Tokyo, \endgraf
O-okayama, Meguro, Tokyo, 152-8551, Japan
}
\email{kiyo1087gida@gmail.com}

\date{\today}

\keywords{Cusp, Frenet--Serret Formula, Multiplicity, Cuspidal Curvature, 
Normalized Curvature}

\subjclass[2020]{Primary 53A04; Secondary 58K40}

\begin{document}

\maketitle

\begin{abstract}
  First, this paper presents a systematic procedure for constructing criteria 
  for singularities of curves of finite multiplicities in $\R^N$.
  Based on this method, we provide explicit criteria for  
  singularities of multiplicities two, three, and four, 
  including specific cusps appearing only in dimensions three or higher.
  Furthermore, we generalize the normalized curvature functions 
  and the cuspidal curvature to singular curves in $\R^N$. 
  Using these generalized curvatures, we reinterpret the existence and uniqueness theorem
  given by Fukui for curves in $\R^N$ of finite multiplicities.
    
\end{abstract}

\section{Introduction} \label{sec:Introduction}

This paper is devoted to studying some criteria and geometric properties for curves in $\R^N$
with singularities---points where the first derivative of a curve vanishes.
When a curve admits singularities, evaluating its geometric properties requires specialized approaches. 
A fundamental problem in singularity theory is the classification of map germs under $\mathcal{A}$-equivalence 
(see \cref{ssec:Taylor_coefficients} for the precise definition). 
In particular, 
it is useful for geometric applications to establish easily computable criteria
for standard cusps in $\R^N$ expressed by 
$t \mapsto (t^{n_1}, t^{n_2}, \dots, t^{n_k})$.
It is known that the planar $(m,n)$-cusp ($t \mapsto (t^m, t^n)$)
has an application in optics (\!\!\cite{Zhang_Pei_oprik}).

For plane curves, various criteria for specific singularities 
of finite multiplicity (\cref{defn:multiplicity}) 
have been extensively investigated and utilized to study geometric invariants
(\cref{rem:criteria_known}). 
Recently, Fukui and Hoshino \cite{Fukui_Hoshino_curvature_criteria} showed criteria for 
$\A$-simple real singularities, which Bruce and Gaffney \cite{BG82_simple_sing} classified
for the complex case,  
by taking a special parameter of curves: the curvature parameter.
Despite these developments, a systematic method to construct criteria for singularities 
of finite multiplicities for curves in general higher-dimensional space $\R^N$ 
has not yet been fully established.
While criteria for singularities have thus far been established primarily for plane curves, 
singular space curves naturally arise as the singular sets of surfaces
(see \cref{fig:sing_surface}). 
Therefore, it is a natural geometric progression 
to extend criteria of singularities for curves in higher-dimensional space.

\begin{figure}[h]
  \centering
  \begin{tabular}{c@{\hspace{2cm}}c}
      \includegraphics[width=50mm]{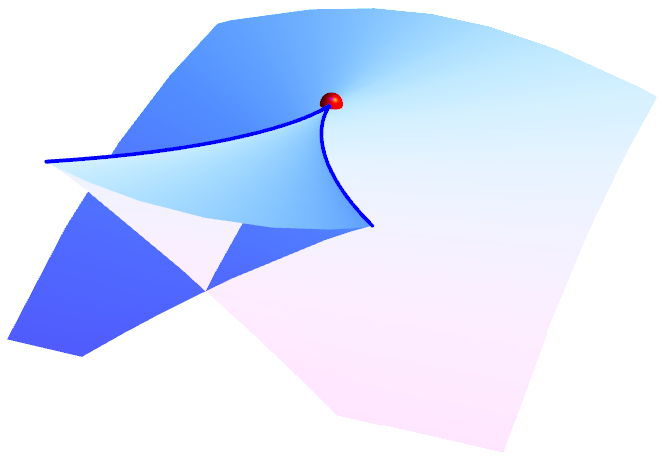} &
      \includegraphics[width=35mm]{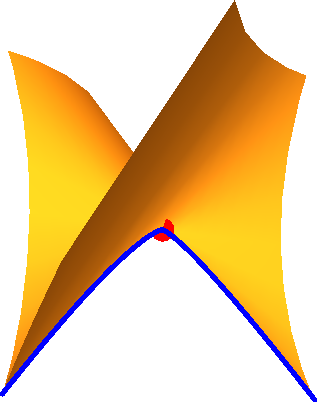}\\
      Swallowtail & Cuspidal butterfly
  \end{tabular}
  \caption{Singular curves appearing on surfaces in $\R^3$} 
  \label{fig:sing_surface}
  
  \vspace{1ex} 
  \begin{minipage}{0.9\linewidth} 
    {
    \small
    The blue curve in each figure above is the image of the singular set 
    of the swallowtail and the cuspidal butterfly.
    The left curve in the above figure is a $(2, 3)$-cusp, and 
    the right curve is a $(3, 4, 5)$-cusp in $\R^3$.
    The red point denotes the singular point of each curve.
    }
  \end{minipage}
\end{figure}

On the other hand, from a differential geometric perspective, 
analyzing the behavior of curvature near singularities is essential. 
For curves with an ordinary cusp (e.g., $(2,3)$-cusp), Shiba and Umehara 
\cite{Umehara_2011_simplification, Shiba_Umehara_2012_curvature_functions}
clarified the behavior of the curvature function at the singularity, 
introducing the normalized curvature function, the half-arclength parameter,
and the cuspidal curvature 
to formulate an existence and uniqueness theorem. 
Fukui \cite{Fukui_2017_multiplicities} later generalized this differential geometric approach, and
established a fundamental theorem for curves with singularities of finite multiplicity. 
Moreover, Honda--Saji \cite{Honda_Saji_25_invariant}
and Martins--Saji--Santos--Teramoto \cite{Martins_Saji_Santos_Teramoto_2024_bdd_geom_inv}
introduced some geometric invariants---secondary curvature, bias, further in general, 
$(m,n)$-bias, and $(m, n)$-cuspidal curvature---for planar $(m, n)$-cusp. 

Building upon these foundational works, there is a natural demand to further generalize these curvature functions 
to curves in $\R^N$ 
and to clarify their differential geometric meanings in connection with specific $\mathcal{A}$-equivalence classes.

The purposes of this paper are as follows:
\begin{itemize}
  \item 
   We give a systematic procedure for constructing criteria for singularities appearing on curves in arbitrary dimension $\R^N$. 
  \item
    We introduce some curvatures for curves in $\R^N$ of finite multiplicities.                                                 
\end{itemize}
\noindent
This paper is organized as follows.

\cref{sec:preliminaries} introduces various preliminary notions needed for the discussions in \cref{ssec:RN.construct_suff} and beyond.
In \cref{ssec:Taylor_coefficients}, we introduce notation and conventions we use in this paper, and we review the known criteria for singularities of plane curves in terms of Taylor coefficients (\cref{rem:criteria_known}).
\cref{ssec:FdB_formula} introduces Fa\`{a} di Bruno's formula for curves in $\R^N$, which we use in the computation of derivatives.
In \cref{ssec:multiplicity}, we introduce the concept of multiplicity of curves defined by Fukui \cite{Fukui_2017_multiplicities}.
\cref{sec:Frobenius.numbers} introduces concepts from number theory, such as the Frobenius problem.
While this material deviates from the discussion of singularities, we present it there because the notation established in \cref{defn:denumerant}, particularly $\cNR{2}{A}$, is utilized in \cref{thm:A.equivalence.suff.cond.1}.

In \cref{ssec:RN.construct_suff,ssec:RN.examples}, we provide examples of the construction of criteria for several \A-equivalence classes of singularities of curves in $\R^N$ based on the following procedure:
\begin{enumerate}[label={\textbf{Step~\arabic*}.},ref={\textbf{Step~\arabic*}}]
  \item \label{item:construct.procedure:step.1}%
        Select a curve $\varGamma$ for which the criteria are to be determined, and determine the set $\cNR{2}{A}$ or $\cNR{2}{\varGamma}$ (see \cref{defn:denumerant,defn:space.F.of.functions}).
        Although sufficient conditions for being \A-equivalent to $\varGamma$ can be obtained directly from \cref{thm:general.property.A.equivalence,thm:A.equivalence.suff.cond.1}, these conditions are typically too strong for practical applications.
  \item \label{item:construct.procedure:step.2}%
        Construct weaker sufficient conditions by employing parameter changes and coordinate transformations of $\R^N$.
  \item \label{item:construct.procedure:step.3}%
        Reformulate the constructed conditions into those based on the linear independence of Taylor coefficients of a curve by utilizing linear transformations of $\R^N$.
  \item \label{item:construct.procedure:step.4}%
        Prove, via explicit computation of derivatives, that the constructed sufficient conditions are also necessary.
        If the constructed conditions fail to be invariant under \A-equivalence, return to \ref{item:construct.procedure:step.2}.
\end{enumerate}
\cref{sec:Singularities.multiplicity.2,sec:Singularities.multiplicity.3,sec:Singularities.multiplicity.4} provide criteria for classifying singularities of multiplicity $2$, $3$, and $4$, respectively (\cref{thm:criteria.2.3.and.2.5.and.2.7,conj:criteria.3.4.5.and.3.4.and.3.5.7.and.3.5,conj:criteria.4.5.and.other}).
All the known criteria mentioned in \cref{rem:criteria_known} can be obtained as the $N = 2$ cases of these theorems.
Moreover, \cref{conj:criteria.3.7.etc} gives another perspective to 
\cite[Theorem~3.4]{Fukui_Hoshino_curvature_criteria}.
Furthermore,
    $\paren{3, 4, 5}$-cusps
    and $\paren{3, 5, 7}$-cusps
    in \cref{conj:criteria.3.4.5.and.3.4.and.3.5.7.and.3.5},
    $\paren{3, 7, 8}$-cusps,
    $\paren{3, 7, 11}$-cusps, and
    $\paren{3, 7_{+8}, 11}$-cusps
    in \cref{conj:criteria.3.7.etc},
    and $\paren{4, 5, 6}$-cusps,
    $\paren{4, 5, 7}$-cusps,
    $\paren{4, 5, 11}$-cusps,
    and $\paren{4, 5_{\pm 7}, 11}$-cusps
    in \cref{conj:criteria.4.5.and.other}, appear only when $N \geq 3$.
Also, $\paren{4, 5, 6, 7}$-cusps in \cref{conj:criteria.4.5.and.other} appear only when $N \geq 4$.

In \cref{sec:curvature}, we begin by recalling basic notions of regular curves in $\R^N$. 
The purpose of the latter part of \cref{sec:curvature} is to extend the notion of the 
normalized curvature function defined by Shiba and Umehara \cite{Shiba_Umehara_2012_curvature_functions} to curves in $\R^N$ with a singularity of finite multiplicity (\cref{defn:m.nom.curv.fcn}).
In addition, for curves in $\R^N$ with a singularity of finite multiplicity, we define another kind of curvature, which is a generalization of the cuspidal curvature introduced by Umehara \cite{Umehara_2011_simplification} for plane curves of multiplicity $2$ (\cref{defn:kappa.m.i}), and give a generalization of the relation between the cuspidal curvature and the normalized curvature function due to Shiba and Umehara \cite{Shiba_Umehara_2012_curvature_functions} (\cref{thm:smoothness.of.nom.curv.fcn} \ref{item:existence.of.sing.curv.}).
Furthermore, we reinterpret the existence and uniqueness theorem given by Fukui \cite{Fukui_2017_multiplicities} for curves in $\R^N$ with singularities of finite multiplicity, using the normalized curvature functions generalized to space curves (\cref{thm:existence.uniqueness.with.sing.}).

\begin{acknowledgements}
The authors would like to thank 
Toshizumi Fukui, Saiki Hoshino, Yoshiki Matsushita, and 
Kotaro Yamada for their helpful advice and comments on this research.
In addition, the authors wish to express their gratitude to Kenta Hayano
for pointing out the connection between \cref{thm:general.property.A.equivalence}
and \cite[Corollary~2.5.1]{Bruce_1987_determinacy_and_unipotency}.
The first author was supported by JST SPRING, Japan Grant Number JPMJSP2180.
\end{acknowledgements}

\section{Preliminaries}
\label{sec:preliminaries}%

  \subsection{Notation and conventions}\label{ssec:Taylor_coefficients}
      
      For $m, n \in \poZ$,
      we denote an $\R^n$-valued $C^{\infty}$-map germ $f$
      defined on an open neighborhood of $\bm{a} \in \R^m$ by
      $f \colon \paren{\R^m, \bm{a}} \to \R^n$,
      and by
      $f \colon \paren{\R^m, \bm{a}} \to \paren{\R^n, \bm{b}}$
      when $f \paren{\bm{a}} = \bm{b} \in \R^n$ is given.
      Throughout this paper,
      for $N \in \geZ{2}$,
      a curve in $\R^N$ is a map germ
      $\gamma \colon \paren{\R, 0} \to (\R^N, \bzero)$.
      For $k \in \cparen{2, \dots, N}$ and $\paren{n_1, \dots, n_k} \in \paren{\geZ{2}}^k$, we denote by $\Cusp{n_1, \dots, n_k}$ the curve $\Cusp{n_1, \dots, n_k} \paren{t} \coloneqq \paren{t^{n_1}, \dots, t^{n_k}, 0, \dots, 0}$ in $\R^N$.
      We denote by
      \begin{alignat}{2}
      \Cusp{3, 7_{+ 8}} (t) & \coloneqq \paren{t^3, t^7 + t^8, 0, \dots, 0},
      \quad &
      \Cusp{3, 7_{+ 8}, 11} (t) & \coloneqq \paren{t^3, t^7 + t^8, t^{11}, 0, \dots, 0}, \\
      \Cusp{4, 5_{+7}} \paren{t} & \coloneqq \paren{t^4, t^5 + t^7, 0, \dots, 0},
      \quad &
      \Cusp{4, 5_{-7}} \paren{t} & \coloneqq \paren{t^4, t^5 - t^7, 0, \dots, 0}, \\
      \Cusp{4, 5_{+7}, 11} \paren{t} & \coloneqq \paren{t^4, t^5 + t^7, t^{11}, 0, \dots, 0},
      \quad &
      \Cusp{4, 5_{-7}, 11} \paren{t} & \coloneqq \paren{t^4, t^5 - t^7, t^{11}, 0, \dots, 0}.
      \end{alignat}
      We also abbreviate
      $\Cusp{3, 7}$ and $\Cusp{3, 7_{+8}}$ as $\Cusp{3, 7_{8 \sigma}}$ ($\sigma = 0, 1$),
      $\Cusp{3, 7, 11}$ and $\Cusp{3, 7_{+8}, 11}$ as $\Cusp{3, 7_{8 \sigma}, 11}$ ($\sigma = 0, 1$),
      and $\Cusp{4, 5}$, $\Cusp{4, 5_{+7}}$, and $\Cusp{4, 5_{-7}}$ as $\Cusp{4, 5_{7 \sigma}}$ ($\sigma = 0, \pm 1$).
          
      We say that two curves
      $\gamma_1, \colon \paren{\bbR, 0} \to \paren{\bbR^N, \bzero}$
      are \emph{\A-equivalent} (or \emph{right-left equivalent})
      to $\gamma_2 \colon \paren{\bbR, 0} \to \paren{\bbR^N, \bzero}$
      if there exist diffeomorphism germs 
      $\phi \colon \paren{\R, 0} \to \paren{\R, 0}$
      and $\varPhi \colon \paren{\R^N, \bzero} \to \paren{\R^N, \bzero}$ 
      such that $\varPhi \circ \gamma_1 \circ \phi = \gamma_2$. 
      In particular, if $\phi = \text{id}$, then $\gamma_1$ and $\gamma_2$ 
      are said to be \emph{\cL-equivalent} (or \emph{left-equivalent}).
      A curve in $\R^N$ is said to have an $X$-cusp (e.g., a $(2, 3)$-cusp) at $t = 0$ if it is \A-equivalent to the model curve $C_X$ (e.g., $\Cusp{2, 3}$).
      For convenience,
      $\phi \colon \paren{\R, 0} \to \paren{\R, 0}$ will be referred to as a \emph{parameter change}, 
      and $\varPhi \colon \paren{\R^N, \bzero} \to \paren{\R^N, \bzero}$ as a \emph{coordinate transformation}.
      We denote by $J_{\varPhi}$ the Jacobian matrix of a coordinate transformation $\varPhi$.
      Unless confusion arises, partial derivatives are abbreviated; for example, 
      $J_{\varPhi} \paren{\bzero}$ is simply written as $J_{\varPhi}$.

      For a curve $\gamma \colon \paren{\R, 0} \to \paren{\R^N, \bzero}$, 
      let us denote by $\sderiv{\gamma}{i}$ the vector
      \begin{equation}
            \mathemph{\sderiv{\gamma}{i}}
            \coloneqq \frac{1}{i!} \,
              \restrict{\frac{d^i \gamma}{d t^i}}{t = 0}
            = \frac{1}{i!} \,
              \deriv{\gamma}{i} \paren{0}
            \in \R^N
            \quad
            (i \in \nnZ),
            \label{eq:defn.divided.derivative}
      \end{equation}
      which is the $i$-th Taylor coefficient of $\gamma$ at $t = 0$. 
      Similarly, we use the notation $f^{[i]}$ for a function germ 
      $f \colon \paren{\R, 0} \to \paren{\R, 0}$. 
      
      For an $n$-dimensional multi-index 
      $\alpha = \paren{\alpha_1, \dots, \alpha_n} \in \paren{\nnZ}^n$, 
      we use the following notation: 
      \begin{itemize}
          \item sum of components
          $\mathemph{\abs{\alpha}}
            \coloneqq \sum_{i = 1}^{n} \alpha_i$,
          \item factorial
          $\mathemph{\alpha!}
            \coloneqq \prod_{i = 1}^{n} \alpha_i !$, and
          \item partial derivative
          $\mathemph{\partial^{\alpha}}
            \coloneqq \frac{\partial^{\abs{\alpha}}}{\partial x_1^{\alpha_1} \cdots \partial x_n^{\alpha_n}}$.
       \end{itemize}
    
    Let $o(t^n)$ be the Landau small $o$ notation as $t \to 0$, and let $\Span{\bv_1, \dots, \bv_k}$ be the linear subspace spanned by vectors $\bv_1, \dots, \bv_k \in \R^N$.
    For brevity, we identify column vectors with their row-vector representations.
    Finally, $\sgn x$ denotes the sign of $x \in \R$, i.e., $\sgn x \in \{0, \pm 1\}$ according to the sign of $x$.

    \begin{rem} \label{rem:criteria_known}
        By using the notation $\gamma^{[i]}$ for the Taylor coefficients, we can prevent the coefficients appearing in the criteria for curves from becoming large. 
        With this notation, known criteria can be rewritten as follows.
        Let $\gamma \colon \paren{\R, 0} \to \paren{\R^2, \bzero}$ be a plane curve.
        \begin{enumerate}[label={(\arabic*)}]
            \item \label{item:cor.known.criteria:2.3}%
                  $\gamma$ is \A-equivalent to
                  $\mathemph{\Cusp{2, 3}}$
                  if and only if
                  $\sderiv{\gamma}{1}
                    = \bzero$
                  and
                  $
                    \det \paren{%
                      \sderiv{\gamma}{2},
                      \sderiv{\gamma}{3}
                    }
                    \neq 0.
                  $
            \item \label{item:cor.known.criteria:2.5}%
                  $\gamma$ is \A-equivalent to
                  $\mathemph{\Cusp{2, 5}}$
                  if and only if there exists $\lambda \in \R$
                  such that
                  $\sderiv{\gamma}{1}
                    = \bzero$,
                  $
                    \sderiv{\gamma}{3}
                    = \lambda \, \sderiv{\gamma}{2},
                  $ 
                  and
                  $
                    \det \paren{%
                      \sderiv{\gamma}{2},
                      \sderiv{\gamma}{5}
                      - 2 \lambda \, \sderiv{\gamma}{4}
                    }
                    \neq 0.
                  $
            \item \label{item:cor.known.criteria:2.7}%
                  $\gamma$ is \A-equivalent to
                  $\mathemph{\Cusp{2, 7}}$
                  if and only if there exist $\lambda, \mu \in \R$
                  such that
                  $\sderiv{\gamma}{1}
                    = \bzero$,
                  $
                    \sderiv{\gamma}{3}
                    = \lambda \, \sderiv{\gamma}{2}$,
                  $\sderiv{\gamma}{5}
                    - 2 \lambda \, \sderiv{\gamma}{4}
                    = \mu \, \sderiv{\gamma}{2},
                  $
                  and
                  \begin{equation}
                    \det \paren{%
                      \sderiv{\gamma}{2},
                      \sderiv{\gamma}{7}
                      - 3 \lambda \, \sderiv{\gamma}{6}
                      + \paren{3 \lambda^3 - 2 \mu} \sderiv{\gamma}{4}
                    }
                    \neq 0.
                  \end{equation}
            \item \label{item:cor.known.criteria:3.4}%
                  $\gamma$ is \A-equivalent to
                  $\mathemph{\Cusp{3, 4}}$
                  if and only if
                  $\sderiv{\gamma}{1}
                    = \sderiv{\gamma}{2}
                    = \bzero$
                  and
                  $
                    \det \paren{%
                      \sderiv{\gamma}{3},
                      \sderiv{\gamma}{4}
                    }
                    \neq 0.
                  $
            \item \label{item:cor.known.criteria:3.5}%
                  $\gamma$ is \A-equivalent to
                  $\mathemph{\Cusp{3, 5}}$
                  if and only if
                  $\sderiv{\gamma}{1}
                    = \sderiv{\gamma}{2}
                    = \bzero,
                    \det \paren{%
                      \sderiv{\gamma}{3},
                      \sderiv{\gamma}{4}
                    }
                    = 0,
                    $
                    and
                    $
                    \det \paren{%
                      \sderiv{\gamma}{3},
                      \sderiv{\gamma}{5}
                    }
                    \neq 0.
                  $
            \item \label{item:cor.known.criteria:4.5}%
                  For each $\sigma \in \cparen{0, \pm 1}$,
                  $\gamma$ is \A-equivalent to 
                  $\Cusp{4, 5}$ $(\sigma = 0)$,
                  $\Cusp{4, 5_{+7}}$ $(\sigma = 1)$, or
                  $\Cusp{4, 5_{-7}}$ $(\sigma = -1)$
                  if and only if
                  $
                    \sderiv{\gamma}{1}
                    = \sderiv{\gamma}{2}
                    = \sderiv{\gamma}{3}
                    = \bzero,
                    \det \paren{%
                      \sderiv{\gamma}{4},
                      \sderiv{\gamma}{5}
                    }
                    \neq 0,
                  $
                  and respectively
                  \begin{equation}
                    \sgn \left(\mu_2 - \frac{5}{4} \, \lambda_2 - \frac{11}{10} \, \mu_1^2\right)
                    = \mathemph{\sigma},
                    \label{eq:cirteria_45etc_sgn}%
                  \end{equation}
                  where
                  $\lambda_2, \lambda_3, \mu_1, \mu_2
                    \in \R$
                  are such that
                  $
                    \sderiv{\gamma}{6}
                    = \lambda_2 \, \sderiv{\gamma}{4}
                    + \mu_1 \, \sderiv{\gamma}{5},
                    \sderiv{\gamma}{7}
                    = \lambda_3 \, \sderiv{\gamma}{4}
                    + \mu_2 \, \sderiv{\gamma}{5}.
                    \label{eq:thm.criteria.4.5.like:defn.lambda23.mu12}%
                  $
        \end{enumerate}
        
        These criteria are collected from the following sources:
        \begin{enumerate*}[label={(\arabic*)}]
            \item \cite[p.12]{Porteous_2001_curves_and_surfaces},
            \item \cite[Theorem~1.23]{Porteous_2001_curves_and_surfaces},
            \item \cite[Theorem~A.1]{Hattori_Honda_Morimoto_2024_Bours_theorem_singularities_arXiv},
            \item \cite[p.12]{Porteous_2001_curves_and_surfaces},
            \item \cite[Fact~2.1]{Martins_Saji_Santos_Teramoto_2019_singular_surfaces}, and
            \item \cite[Theorems~4.1 and 4.13]{Matsushita_2024_classifications_4_5_cusps_arXiv}.
        \end{enumerate*}
    \end{rem}
    
  \subsection{Fa\`{a} di Bruno's Formula for Curves in \Rpdf[N]}
  \label{ssec:FdB_formula}
    \begin{defn}
      \label{defn:partition_of_integer}
      Let $n \in \poZ$.
      A \emph{partition} of $n$ is an $n$-dimensional multi-index
      $\beta \coloneqq \paren{\beta_1, \beta_2, \dots, \beta_n}
        \in \paren{\nnZ}^n$
      satisfying $\sum_{i = 1}^{n} i \, \beta_i = n$.
      We denote by $\mathemph{\parts{n}}$
      the set of partitions of $n$.
     
    \end{defn}
    
    \begin{fact}[Fa\`{a} di Bruno's formula, see \rscite{Theorem~2.1}{Okano_2001_Bruno_applications}]
        \label{fact:Bruno.formula}
        Let $n \in \poZ$.
        We consider two functions
        $g \colon \paren{\R, 0} \to \R$
        and $f \colon \paren{\R, g \paren{0}} \to \R$.
        Then, the composite function
        $f \circ g \colon \paren{\R, 0} \to \R$
        satisfies
        \begin{equation}
            \deriv{\paren{f \circ g}}{n} \paren{s}
            = \sum_{d = 1}^{n}
                \sum_{\substack{%
                    \beta \in \parts{n} \\
                    \abs{\beta} = d
                  }}
                  \frac{n!}{%
                    \beta !
                  }
                  \paren{%
                    \deriv{f}{d} \left(g \paren{s}\right)
                  }
                  \prod_{i = 1}^{n}
                    \left(%
                      \frac{\deriv{g}{i} \paren{s}}{i !}
                 \right)^{\beta_i}.
            \label{eq:Bruno_formula.scalar}
        \end{equation}
       
    \end{fact}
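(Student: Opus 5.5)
We prove Faà di Bruno's formula by induction on $n$, carried out on the equivalent rescaled identity
\begin{equation}
  \frac{\deriv{\paren{f \circ g}}{n} \paren{s}}{n!}
  = \sum_{\beta \in \parts{n}}
    \frac{\deriv{f}{\abs{\beta}} \paren{g \paren{s}}}{\beta!}
    \prod_{i = 1}^{n}
      \paren{\frac{\deriv{g}{i} \paren{s}}{i!}}^{\beta_i}.
\end{equation}
Grouping the sum by $d = \abs{\beta}$ gives exactly \cref{eq:Bruno_formula.scalar}. For $n = 1$ the only partition is $\beta = \paren{1}$, and the formula reduces to the chain rule.

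\textbf{Inductive step: differentiate one term.} Assume the formula holds for $n$ and fix a partition $\beta$ of $n$. Differentiating the corresponding term in $s$ gives two kinds of contributions.

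The derivative may fall on $\deriv{f}{d} \paren{g \paren{s}}$ with $d = \abs{\beta}$. This produces $\deriv{f}{d+1} \paren{g \paren{s}} \, \deriv{g}{1} \paren{s}$, so the term is indexed by the partition $\beta + e_1$ of $n+1$. Here a trailing component $\beta_{n+1} = 0$ is appended.

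The derivative may instead fall on one of the $\beta_i$ factors $\deriv{g}{i} / i!$. This replaces one copy of that factor by $\deriv{g}{i+1} / i!$, so the new partition is $\beta - e_i + e_{i+1}$ of $n+1$. Writing $1/i! = \paren{i+1} / \paren{i+1}!$, the term acquires the coefficient $\beta_i \paren{i+1}$.

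\textbf{Matching coefficients.} Fix a partition $\gamma$ of $n+1$ and collect every contribution to the monomial $\deriv{f}{\abs{\gamma}} \prod_i \paren{\deriv{g}{i} / i!}^{\gamma_i}$. We must show the total coefficient is $\paren{n+1} / \gamma!$, which is what the rescaled formula at order $n+1$ requires.

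The first kind of contribution arises only if $\gamma_1 \geq 1$, from $\beta = \gamma - e_1$. Its coefficient is $1/\paren{\gamma - e_1}! = \gamma_1 / \gamma!$.

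The second kind arises from $\beta = \gamma + e_i - e_{i+1}$ for each $i \geq 1$ with $\gamma_{i+1} \geq 1$. Its coefficient is
\begin{equation}
  \frac{\paren{\gamma_i + 1} \paren{i+1}}{\paren{\gamma + e_i - e_{i+1}}!}
  = \frac{\paren{i+1} \gamma_{i+1}}{\gamma!}.
\end{equation}
Summing both kinds gives
\begin{equation}
  \frac{1}{\gamma!} \paren{\gamma_1 + \sum_{j \geq 2} j \, \gamma_j}
  = \frac{1}{\gamma!} \sum_{j} j \, \gamma_j
  = \frac{n+1}{\gamma!}.
\end{equation}

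\textbf{Closing the induction.} Differentiating the order-$n$ formula yields $\deriv{\paren{f \circ g}}{n+1} / n!$, which by the computation above equals $\paren{n+1} \sum_{\gamma \in \parts{n+1}} \paren{\cdots} / \gamma!$. Dividing by $n+1$ gives the rescaled formula at order $n+1$, which closes the induction.

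The main care point is this bookkeeping step. One must check that the correspondence between $\beta$ and $\gamma$ is bijective, and that terms with $\gamma_{i+1} = 0$ are correctly excluded; the convention that partitions are padded with zeros, via the index range $\beta \in \paren{\nnZ}^n$, handles this.
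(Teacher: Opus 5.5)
Your proof is correct. The paper gives no proof of this statement: it quotes it from Okano et al.\ and uses it only through \cref{cor:Bruno_for_curves}. Your induction therefore supplies a self-contained argument where the paper relies on a citation.

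The key coefficient count is right. Fix a partition $\gamma$ of $n+1$; rename it, since $\gamma$ denotes a curve throughout the paper. It receives $\gamma_1/\gamma!$ from $\beta=\gamma-e_1$ and $(i+1)\gamma_{i+1}/\gamma!$ from $\beta=\gamma+e_i-e_{i+1}$, and these sum to $(n+1)/\gamma!$. Your explicit inverse maps already settle the bijectivity you flag. The only boundary case, $i=n$, forces $\gamma=e_{n+1}$ and $\beta=e_n$, which is harmless.

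A route closer to how the paper actually uses the formula, in terms of Taylor coefficients, is to compare $n$-jets. Put $a_i \coloneqq g^{(i)}(s)/i!$ and substitute $u=\sum_{i=1}^{n} a_i h^i+o(h^n)$ into $f(g(s)+u)=\sum_{d=0}^{n} f^{(d)}(g(s))\,u^d/d!+o(u^n)$. Then read off the coefficient of $h^n$ using the multinomial theorem $\bigl(\sum_i a_i h^i\bigr)^d=\sum_{\abs{\beta}=d}\frac{d!}{\beta!}\prod_i a_i^{\beta_i}h^{\sum_i i\beta_i}$.

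\textbf{Trade-offs.} The jet argument makes the coefficient $n!/\beta!$ transparent, since the multinomial $d!$ cancels the Taylor factor $1/d!$. It also extends directly to the vector version \cref{fact:Bruno.extension} by applying the multinomial theorem coordinatewise. Its cost is a remark on composing Peano remainders and on uniqueness of Taylor coefficients. Your induction needs only the chain and product rules and no remainder estimates. The price is the combinatorial bookkeeping, which would become heavier (tracking the $q_{ij}$) in the multivariate case.
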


    \cref{fact:Bruno.formula}
    can be extended as follows.

    \begin{fact}[\rcite{Theorem 2.1}{Mishkov_2000_Generalization_Bruno}]
        \label{fact:Bruno.extension}%

        Let $n \in \poZ$ and $r \in \geZ{2}$.
        We consider an $\R^r$-valued function
        $\bm{x} \paren{t} = \paren{x_1 \paren{t}, \dots, x_r \paren{t}}
          \colon \paren{\R, 0} \to \R^r$
        and a function
        $f \colon \paren{\R^r, \bm{x} \paren{0}} \to \R$.
        Then, the composite function
        $f \circ \bm{x} \colon \paren{\R, 0} \to \R$
        satisfies
        \begin{equation}
          \deriv{\paren{f \circ \bm{x}}}{n} \paren{t}
          = \sum_{\beta \in \parts{n}}
              \sum_{\paren{q_{i j}} \in \mathcal{Q}_{\beta}}
                \frac{n!}{%
                  \displaystyle
                  \prod_{i = 1}^{n}
                    \paren{i!}^{\beta_i} \,
                  \prod_{i = 1}^{n}
                    \prod_{j = 1}^{r}
                      q_{i j} !
                } \,
                \partial^{\alpha} f
                  \paren{\bm{x} \paren{t}} \,
                \prod_{i = 1}^{n}
                  \prod_{j = 1}^{r}
                    \paren{\deriv{x}{i}_j \paren{t}}^{q_{i j}},
          \label{eq:Bruno_formula.vector}
        \end{equation}
        where
        \begin{equation}
          \mathcal{Q}_{\beta}
          \coloneqq
            \cparen{%
              \paren{q_{i j}}_{1 \leq i \leq n, \, 1 \leq j \leq r} \in \Matrices{n \times r}{\nnZ}
              \mvert
              \text{%
                $\sum_{j = 1}^{r} q_{i j} = \beta_i$
                for each $i = 1, \dots, n$
              }
            }
          \label{eq:Bruno_formula.vec.sum}
        \end{equation}
        and the $r$-dimensional multi-index
        $\alpha = \paren{\alpha_1, \dots, \alpha_r}
          \in \paren{\nnZ}^r$
        is given by
        \begin{equation}
          \alpha_j \coloneqq \sum_{i = 1}^{n} q_{i j}
            \quad (j = 1, 2, \dots, r).
          \label{eq:Bruno_formula.vec.notation}
        \end{equation}
    \end{fact}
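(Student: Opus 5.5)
The plan is to derive the vector-valued formula \eqref{eq:Bruno_formula.vector} from the scalar formula of \cref{fact:Bruno.formula}, with the help of the multivariate Taylor expansion. The most economical route is to work with Taylor coefficients, i.e.\ with formal power series. Fix $t$ and write $\bm{x}(t+h)-\bm{x}(t) = \sum_{i\ge 1} \frac{h^i}{i!}\,\deriv{\bm{x}}{i}(t)$. Expand $f$ around $\bm{x}(t)$ in its Taylor series,
\begin{equation}
f(\bm{x}(t)+\bm{y}) \sim \sum_{\alpha} \frac{\partial^\alpha f(\bm{x}(t))}{\alpha!}\,\bm{y}^\alpha .
\end{equation}
Then $\deriv{(f\circ\bm{x})}{n}(t)/n!$ is the coefficient of $h^n$ in
\begin{equation}
\sum_{\alpha}\frac{\partial^\alpha f}{\alpha!}\prod_{j=1}^r\Bigl(\sum_{i\ge1}\frac{h^i}{i!}\deriv{x}{i}_j(t)\Bigr)^{\alpha_j}.
\end{equation}
Only finitely many $\alpha$, namely those with $\abs{\alpha}\le n$, contribute to this coefficient. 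Smoothness and Taylor's theorem with remainder justify the truncation, since the remainder is $o(h^n)$.

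The key combinatorial step is to expand each power $\bigl(\sum_i a_{ij}h^i\bigr)^{\alpha_j}$, with $a_{ij}=\deriv{x}{i}_j/i!$, by the multinomial theorem. The result is
\begin{equation}
\sum_{(q_{1j},\dots,q_{nj})} \frac{\alpha_j!}{\prod_i q_{ij}!}\prod_i a_{ij}^{q_{ij}}\, h^{\sum_i i q_{ij}},
\end{equation}
where the sum runs over $\sum_i q_{ij}=\alpha_j$. Multiplying over $j$, the factor $\alpha!=\prod_j\alpha_j!$ cancels the $1/\alpha!$ from Taylor's formula. The coefficient of $h^n$ therefore collects exactly the matrices $(q_{ij})$ with $\sum_{i,j} i\,q_{ij}=n$.

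Next, reorganize the terms by setting $\beta_i:=\sum_j q_{ij}$. The condition $\sum_i i\beta_i=n$ says precisely that $\beta\in\parts{n}$ and $(q_{ij})\in\mathcal{Q}_\beta$, while $\alpha_j=\sum_i q_{ij}$ recovers \eqref{eq:Bruno_formula.vec.notation}. Conversely, every pair consisting of $\beta$ and a matrix in $\mathcal{Q}_\beta$ arises from a unique $\alpha$. Multiplying by $n!$ and writing $a_{ij}^{q_{ij}}=(\deriv{x}{i}_j)^{q_{ij}}/(i!)^{q_{ij}}$, we get $\prod_{i,j}(i!)^{q_{ij}}=\prod_i (i!)^{\beta_i}$. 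This is exactly the denominator in \eqref{eq:Bruno_formula.vector}.

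The main obstacle is the bookkeeping in this reindexing. One has to check that the map $(\alpha,\{q_{\cdot j}\})\mapsto(\beta,(q_{ij}))$ is a bijection onto $\bigsqcup_{\beta\in\parts{n}}\mathcal{Q}_\beta$. One also has to check that no multiplicities are lost, i.e.\ that the multinomial coefficients combine to $\prod_{i,j}q_{ij}!$ with no leftover factor. A secondary point is making the "coefficient of $h^n$" argument rigorous for $C^\infty$ rather than analytic germs. I would handle this by noting that the $n$-th derivative depends only on the $n$-jets of $\bm{x}$ and $f$, and then applying Taylor's theorem with Peano remainder to both. As a consistency check, the case $r=1$ gives $q_{i1}=\beta_i$, and the formula reduces to \cref{fact:Bruno.formula} with $d=\abs{\beta}$.
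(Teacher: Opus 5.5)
Your argument is correct. The paper does not prove this statement at all. It cites Mishkov's Theorem~2.1 and only ever uses it through the Taylor-coefficient version \cref{eq:Bruno_curve.coord} of \cref{cor:Bruno_for_curves}~\ref{item:cor_Bruno_curve.coord}. So your proof is a self-contained replacement rather than a variant of something in the text.

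The citation buys brevity. Your coefficient-extraction argument needs only Taylor's theorem and the multinomial theorem: expand $f$ at $\bm{x}(t)$ with Peano remainder, substitute the degree-$n$ Taylor polynomial of $\bm{x}$, apply the multinomial theorem, cancel $\alpha! = \prod_j \alpha_j!$, and regroup by row sums. If you stop before multiplying by $n!$, it produces \cref{eq:Bruno_curve.coord} directly, so the factors $n!$ and $\prod_i (i!)^{\beta_i}$ never need to appear. That is the only form the paper actually uses.

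The ``bijection'' you flag as the main obstacle is trivial. Both index sets are the set of matrices $(q_{ij}) \in \Matrices{n \times r}{\nnZ}$ with $\sum_{i,j} i \, q_{ij} = n$, with $\alpha$ the column sums and $\beta$ the row sums. The multinomial coefficients combine to exactly $\alpha! / \prod_{i,j} q_{ij}!$, with nothing left over.

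For the smooth-versus-analytic point, say explicitly two things:
\begin{itemize}
  \item $\bm{y}(h) = O(h)$ turns the remainder $o(\abs{\bm{y}}^n)$ into $o(h^n)$;
  \item the final identification is uniqueness of the degree-$n$ Taylor polynomial of the $C^n$ function $h \mapsto f(\bm{x}(t+h))$.
\end{itemize}

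Finally, despite your opening sentence, the argument never uses \cref{fact:Bruno.formula}. It reproves that formula as the case $r = 1$, which also shows the hypothesis $r \ge 2$ is inessential.
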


    \begin{rem}
        Note that
        \begin{equation}
          \abs{\alpha}
          = \sum_{j = 1}^{r} \alpha_j
          = \sum_{i = 1}^{n} \sum_{j = 1}^{r} q_{i j}
          = \sum_{i = 1}^{n} \beta_i
          = \abs{\beta}.
        \end{equation}
        \cref{tab:qij_alpha_and_beta} summarizes the relationships among $(q_{i j})$, $\alpha$, and $\beta$ involved in \cref{fact:Bruno.extension}.
        \begin{table}[htbp]
            \centering
            \begin{tabular}{c||ccc|c}
                  & $1$ & $\cdots$ & $r$ & sum \\ \hline \hline
                 $1$ & $q_{1 1}$ & $\cdots$ & $q_{1 r}$ & $\beta_{1}$ \\
                 $\vdots$ & $\vdots$ & $\ddots$ & $\vdots$ & $\vdots$ \\
                 $n$ & $q_{n 1}$ & $\cdots$ & $q_{n r}$ & $\beta_{n}$ \\ \hline
                 sum & $\alpha_{1}$ & $\cdots$ & $\alpha_{r}$ & $\abs{\alpha} = \abs{\beta}$
            \end{tabular}
            \caption{Relation of $(q_{i j})$, $\alpha$, and $\beta$}
            \label{tab:qij_alpha_and_beta}%
        \end{table}
    \end{rem}

    \begin{cor}
      \label{cor:Bruno_for_curves}%
      Let $\gamma \colon \paren{\R, 0} \to \paren{\R^N, \bzero}$
      be a curve in $\R^N$.
      Then, we get the following:
      \begin{enumerate}[label={(\alph*)}, leftmargin = *]
        \item \label{item:cor_Bruno_curve.para}%
          If $\phi \colon \paren{\R, 0} \to \paren{\R, 0}$
          is a parameter change, then the curve
          $\tilde{\gamma} \coloneqq \gamma \circ \phi
            \colon \paren{\R, 0} \to \paren{\R^N, \bzero}$
          satisfies
          \begin{equation}
            \sderiv{\tilde{\gamma}}{n}
            = \sum_{d = 1}^{n}
                \sum_{\substack{%
                    \beta \in \parts{n} \\
                    \abs{\beta} = d
                  }}
                  \frac{d!}{%
                    \beta!
                  } \, 
                  \sderiv{\gamma}{d}
                  \prod_{i = 1}^{n}
                    \left(%
                      \sderiv{\phi}{i}
                    \right)^{\beta_i}.
            \label{eq:Bruno_curve.para}
          \end{equation}
        \item \label{item:cor_Bruno_curve.coord}%
          Let $\gamma$ be expressed as
          $\gamma \paren{t} = \paren{x_1 \paren{t}, \dots, x_N \paren{t}}$.
          If $\varPhi \colon \paren{\R^N, \bm{0}} \to \paren{\R^N, \bm{0}}$
          is a coordinate transformation,
          then the curve
          $\hat{\gamma} \coloneqq \varPhi \circ \gamma
            \colon \paren{\R, 0} \to \paren{\R^N, \bzero}$
          satisfies
          \begin{equation}
            \sderiv{\hat{\gamma}}{n}
            = \sum_{\beta \in \parts{n}}
                \sum_{\paren{q_{i j}} \in \mathcal{Q}_{\beta}}
                  \frac{1}{%
                    \displaystyle
                    \prod_{i = 1}^{n}
                      \prod_{j = 1}^{N}
                        q_{i j}!
                  } \,
                  \partial^{\alpha} \varPhi \paren{\bzero} \,
                  \prod_{i = 1}^{n}
                    \prod_{j = 1}^{N}
                      \left(\sderiv{x}{i}_j\right)^{q_{i j}},
            \label{eq:Bruno_curve.coord}
          \end{equation}
          under the notation
          \cref{eq:Bruno_formula.vec.sum,eq:Bruno_formula.vec.notation}
          with $r$ replaced by $N$.
      \end{enumerate}
    \end{cor}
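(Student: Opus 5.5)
Both parts follow from the two versions of Fa\`{a} di Bruno's formula already stated, evaluated at $t = 0$ and divided by $n!$ so that everything is expressed through the Taylor coefficients $\sderiv{\cdot}{i}$ of \cref{eq:defn.divided.derivative}. In each case I apply the relevant Fact componentwise to the $N$ scalar components of the curve and then collect the components back into a vector identity.

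For \ref{item:cor_Bruno_curve.para}, I apply \cref{fact:Bruno.formula} to each component $f = x_k$ of $\gamma$ and $g = \phi$, and set $s = 0$. Since $\phi(0) = 0$, each factor $\deriv{f}{d}(g(0))$ becomes $\deriv{\gamma}{d}(0) = d!\,\sderiv{\gamma}{d}$. Each factor $\deriv{g}{i}(0)/i!$ is by definition $\sderiv{\phi}{i}$. Dividing both sides by $n!$ gives
\begin{equation}
\sderiv{\tilde\gamma}{n} = \frac{1}{n!}\sum_{d}\sum_{\beta} \frac{n!}{\beta!}\, d!\,\sderiv{\gamma}{d}\prod_i \paren{\sderiv{\phi}{i}}^{\beta_i},
\end{equation}
which simplifies to \cref{eq:Bruno_curve.para}. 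The only point needing care is that the factor $d!$ comes from converting $\deriv{\gamma}{d}(0)$ into a Taylor coefficient, while the $n!$ cancels.

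For \ref{item:cor_Bruno_curve.coord}, I apply \cref{fact:Bruno.extension} with $r = N$ and $\bm{x} = \gamma$, componentwise to $f = \varPhi_k$ for $k = 1, \dots, N$, and evaluate at $t = 0$. Since $\bm{x}(0) = \bzero$, the factor $\partial^\alpha\varPhi_k(\bzero)$ appears; collecting over $k$ gives the vector $\partial^\alpha\varPhi(\bzero)$. Next, I write $\deriv{x_j}{i}(0) = i!\,\sderiv{x}{i}_j$. The product $\prod_{i,j}\paren{\deriv{x_j}{i}(0)}^{q_{ij}}$ then equals
\begin{equation}
\prod_i (i!)^{\sum_j q_{ij}} \prod_{i,j}\paren{\sderiv{x}{i}_j}^{q_{ij}} = \prod_i (i!)^{\beta_i}\prod_{i,j}\paren{\sderiv{x}{i}_j}^{q_{ij}},
\end{equation}
using the constraint $\sum_j q_{ij} = \beta_i$ from \cref{eq:Bruno_formula.vec.sum}. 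This factor $\prod_i (i!)^{\beta_i}$ cancels the identical factor in the denominator of \cref{eq:Bruno_formula.vector}. Dividing by $n!$ then cancels the leading $n!$, which leaves exactly \cref{eq:Bruno_curve.coord}.

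I do not expect a genuine obstacle here; the corollary is bookkeeping. The step most likely to go wrong is the factorial cancellation in \ref{item:cor_Bruno_curve.coord}, where one must use the row-sum constraint $\sum_j q_{ij} = \beta_i$ so that the factor $\prod_i (i!)^{\beta_i}$ cancels exactly. A minor secondary point is that the hypotheses $\phi(0) = 0$ and $\varPhi(\bzero) = \bzero$ are what allow all derivatives to be evaluated at the base points $0$ and $\bzero$.
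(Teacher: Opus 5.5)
Your proposal is correct and matches the paper's proof: evaluate both Fa\`{a} di Bruno formulas at the base point, rewrite the derivatives as Taylor coefficients and divide by $n!$, and in part (b) use the row-sum constraint $\sum_{j} q_{ij} = \beta_i$ to cancel the factor $\prod_{i} (i!)^{\beta_i}$. You are slightly more explicit than the paper about where the factor $d!$ in part (a) comes from.
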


    \begin{proof}
      \begin{enumerate}[label={(\alph*)}]
        \item Substituting $s = 0$
          and $\sfrac{\deriv{\phi}{i} \paren{0}}{i !}
            = \sderiv{\phi}{i}$
          into \cref{eq:Bruno_formula.scalar},
          we get the conclusion.
        \item Substituting $r = N$, $t = 0$, and
          $\deriv{x}{i}_j \paren{0}
            = i! \, \sderiv{x}{i}_j$
          into \cref{eq:Bruno_formula.vector}
          and noting 
          \begin{equation}
            \prod_{i = 1}^{n}
              \prod_{j = 1}^{N}
                \paren{i!}^{q_{i j}}
            = \prod_{i = 1}^{n}
              \paren{i!}^{\sum_{j = 1}^{N} q_{i j}}
            = \prod_{i = 1}^{n}
              \paren{i!}^{\beta_i},
          \end{equation}
          we get the claim.
      \end{enumerate}
    \end{proof}

  \subsection{Multiplicity of a Curve}
  \label{ssec:multiplicity}%

    \begin{defn}[\rcite{Section~1}{Fukui_2017_multiplicities}]
        \label{defn:multiplicity}%
        A curve $\gamma \colon \paren{\bbR, 0}
            \to \paren{\bbR^N, \bzero}$
                 is said to be \emph{of multiplicity $m \in \poZ$}
                  at $t = 0$
                if there exists a curve
                $\overline{\gamma}$ in $\R^N$
                with $\overline{\gamma} \paren{0} \neq \bzero$
                such that $\gamma$ is expressed around $t = 0$ as
                \begin{equation}
                    \gamma \paren{t}
                    = \frac{t^m}{m} \,
                        \overline{\gamma} \paren{t}.
                    \label{eq:multiplicity.defn}
                \end{equation}
    \end{defn}

    \begin{prop}
      \label{prop:multi_and_lin_dep.invariant}%
      \begin{enumerate}[label={(\arabic*)}]
        \item \label{item:prop_inv.multi}%
          The multiplicity of a curve in $\R^N$
          is invariant under \Aqui-equivalence.
        \item \label{item:prop_inv.lin_dep}%
          Let $\gamma \colon \paren{\bbR, 0}
            \to \paren{\bbR^N, \bzero}$
          be of multiplicity $m \in \geZ{2}$.
          Then, for each $n \in \geZ{3}$
          with $m + 1 \leq n \leq 2 m - 1$,
          the property that%
          \begin{equation} \label{eq:inv.span}
            \sderiv{\gamma}{n}
            \in \Span{\sderiv{\gamma}{m}, \dots, \sderiv{\gamma}{n - 1}}
          \end{equation}
          is invariant under \Aqui-equivalence.
      \end{enumerate}
    \end{prop}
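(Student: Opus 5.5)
The plan is to treat the two effects of \A-equivalence separately: the parameter change $\phi$ and the coordinate transformation $\varPhi$. Since $\varPhi\circ\gamma\circ\phi$ can be written as $(\varPhi\circ\gamma)\circ\phi$, it suffices to show that each of the two properties is preserved by either operation alone. The tool in both cases is \cref{cor:Bruno_for_curves}. Throughout, write $\gamma$ of multiplicity $m$, so that $\sderiv{\gamma}{i}=\bzero$ for $i<m$ and $\sderiv{\gamma}{m}=\overline{\gamma}(0)/m\neq\bzero$. Conversely, multiplicity $m$ is equivalent to these Taylor conditions, by Taylor's theorem with integral remainder (Hadamard's lemma applied to $\gamma$).

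For \ref{item:prop_inv.multi}, take the parameter change first. Apply \cref{eq:Bruno_curve.para} to $\tilde\gamma=\gamma\circ\phi$, where $\sderiv{\phi}{1}\neq 0$. A term with $\sderiv{\gamma}{d}$ has $d\le n$, and it vanishes unless $d\ge m$. Hence $\sderiv{\tilde\gamma}{n}=\bzero$ for $n<m$. For $n=m$, only $d=m$ with $\beta=(m,0,\dots)$ survives, giving
\[
\sderiv{\tilde\gamma}{m}=\paren{\sderiv{\phi}{1}}^m\sderiv{\gamma}{m}\neq\bzero .
\]
For the coordinate transformation, write $\varPhi(\bm{x})=J_\varPhi\bm{x}+Q(\bm{x})$ with $Q$ of order $\geq 2$. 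Then $Q(\gamma(t))=O(t^{2m})$, so
\begin{equation}
\sderiv{\hat\gamma}{n}=J_\varPhi\,\sderiv{\gamma}{n}\quad(n\le 2m-1).
\end{equation}
Since $J_\varPhi$ is invertible, the multiplicity is again $m$. This same identity avoids needing the full formula \cref{eq:Bruno_curve.coord}.

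For \ref{item:prop_inv.lin_dep}, the coordinate-change part is immediate from the identity above. The range $n\le 2m-1$ is exactly what is needed, and $J_\varPhi$ preserves span membership. For the parameter change, fix $n$ with $m+1\le n\le 2m-1$ and expand $\sderiv{\tilde\gamma}{k}$ for $m\le k\le n$ using \cref{eq:Bruno_curve.para}. A partition $\beta\in\parts{k}$ with $\abs{\beta}=d\geq m$ and $k\le 2m-1$ has $\beta_1\ge 2d-k\ge 1$. More importantly, the term with $d=k$ is $\paren{\sderiv{\phi}{1}}^k\sderiv{\gamma}{k}$, and all other terms involve $\sderiv{\gamma}{d}$ with $m\le d<k$. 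The expansion is therefore triangular:
\[
\sderiv{\tilde\gamma}{k}=\paren{\sderiv{\phi}{1}}^k\sderiv{\gamma}{k}+\sum_{d=m}^{k-1}c_{k,d}\,\sderiv{\gamma}{d},
\]
where the $c_{k,d}$ are polynomials in the $\sderiv{\phi}{i}$.

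By induction on $k$, it follows that $\Span{\sderiv{\tilde\gamma}{m},\dots,\sderiv{\tilde\gamma}{k}}=\Span{\sderiv{\gamma}{m},\dots,\sderiv{\gamma}{k}}$ for every $k\le 2m-1$. Now write $V_{n-1}$ for the common span at level $n-1$. Then $\sderiv{\tilde\gamma}{n}\equiv\paren{\sderiv{\phi}{1}}^n\sderiv{\gamma}{n}\pmod{V_{n-1}}$, so $\sderiv{\tilde\gamma}{n}\in V_{n-1}$ if and only if $\sderiv{\gamma}{n}\in V_{n-1}$. Since $\phi^{-1}$ is again a parameter change, the property transfers in both directions.

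The main obstacle is the careful use of the bound $n\le 2m-1$ in the coordinate-transformation step. Beyond it, the quadratic part of $\varPhi$ would contribute $\partial^\alpha\varPhi(\bzero)$ applied to products of $\sderiv{\gamma}{m}$'s, and these need not lie in the span. This is also why the hypothesis $n\ge 3$, together with $m\geq 2$, is consistent with $n\le 2m-1$. I will also check the triangular structure of \cref{eq:Bruno_curve.para} explicitly, in particular that the coefficient of $\sderiv{\gamma}{k}$ in $\sderiv{\tilde\gamma}{k}$ is exactly $\paren{\sderiv{\phi}{1}}^k$.
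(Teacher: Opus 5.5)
Your proposal is correct and follows essentially the same route as the paper. Both arguments treat parameter changes and coordinate transformations separately. For parameter changes, \cref{eq:Bruno_curve.para} gives the triangular relation $\sderiv{\tilde\gamma}{k}\in(\sderiv{\phi}{1})^k\sderiv{\gamma}{k}+\Span{\sderiv{\gamma}{m},\dots,\sderiv{\gamma}{k-1}}$. For coordinate transformations, one has $\sderiv{\hat\gamma}{k}=J_{\varPhi}\,\sderiv{\gamma}{k}$ for $k\le 2m-1$.

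The only difference is how you obtain that last identity: you use the elementary estimate $Q(\gamma(t))=O(t^{2m})$ for the nonlinear part of $\varPhi$, whereas the paper reads it off from the multivariate Fa\`{a} di Bruno formula \cref{eq:Bruno_curve.coord} together with \cref{rem:coord_transf.multi_m}. Your version is slightly shorter.
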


    \begin{rem}
      \label{rem:coord_transf.multi_m}%
      For a curve $\gamma$ of multiplicity $m \in \geZ{2}$ and an integer $n \ge m$, when we consider \cref{eq:Bruno_curve.coord}, it suffices to take the sum over indices $\paren{q_{ij}} \in \mathcal{Q}_{\beta}$ such that
      \begin{equation} 
        q_{i j} = 0
        \quatext{for all}
        i = 1, \dots, m - 1
        \quatext{and all}
        j = 1, \dots, N,
      \end{equation}
      since
      $\prod_{i = 1}^{n}
        \prod_{j = 1}^{N}
          \paren{\sderiv{x}{i}_j \paren{0}}^{q_{i j}}$
      vanishes for all other $\paren{q_{ij}}$. If there exists such a $(q_{ij}) \in \mathcal{Q}_{\beta}$, then $\beta$ must satisfy 
      \begin{equation}
        \beta_i
        = \sum_{j = 1}^{N} q_{i j}
        = 0
        \quatext{for all}
        i = 1, \dots, m - 1,
      \end{equation}
      that is, $\beta = (0, \dots, 0, \beta_m, \dots, \beta_n)$.
    \end{rem}

    \begin{proof}[Proof of \cref{prop:multi_and_lin_dep.invariant}]
      We consider (a) parameter changes and (b) coordinate transformations separately.
      Assume that $\gamma \colon \paren{\R, 0} \to \paren{\R^N, \bzero}$
      is of multiplicity $m \in \geZ{2}$.

      (a) Let us consider a parameter change $\phi$
          and the curve $\tilde{\gamma} \coloneqq \gamma \circ \phi$.
          By \cref{cor:Bruno_for_curves} \ref{item:cor_Bruno_curve.para},
          we can compute $\sderiv{\tilde{\gamma}}{k}$
          ($1 \leq k \leq 2 m - 1$) as follows:
          \begin{enumerate}[label={(\Roman*)}]
          \item 
          When $1 \leq k \leq m - 1$, we get $\sderiv{\tilde{\gamma}}{k} = \bzero$, since $\sderiv{\gamma}{d} = \bzero$ for all $d = 1, \dots, k$.
          \item \label{item:diff.of.gamm.tilde}
          When $m \leq k \leq 2 m - 1$, since the only partition $\beta$ of $k \in \poZ$ satisfying $\abs{\beta} = k$ is $\beta = \paren{k, 0, \dots, 0}$, \cref{eq:Bruno_curve.para} implies 
          $\sderiv{\tilde{\gamma}}{k}
            \in \Span{\sderiv{\gamma}{m}, \dots, \sderiv{\gamma}{k - 1}}
            + \paren{\sderiv{\phi}{1}}^{k} \, \sderiv{\gamma}{k}$.
          \end{enumerate}
          Thus, we have \ref{item:prop_inv.multi} for parameter changes.
          To show \ref{item:prop_inv.lin_dep}, we fix $n \in \{m+1, m+2, \dots, 2m-1\}$.
          The above calculation in \ref{item:diff.of.gamm.tilde} gives $\tilde{\gamma}^{[n]} \in \Span{\gamma^{[m]}, ..., \gamma^{[n]}} $ 
          and 
          $\Span{\tilde{\gamma}^{[m]}, ..., \tilde{\gamma}^{[n-1]}} = \Span{\gamma^{[m]}, ..., \gamma^{[n-1]}}$. 
          If \cref{eq:inv.span} holds, then
          $\Span{\gamma^{[m]}, ..., \gamma^{[n]}} \allowbreak = \Span{\gamma^{[m]}, ..., \gamma^{[n-1]}}$.
          They ensure 
          $\tilde{\gamma}^{[n]} \in \Span{\tilde{\gamma}^{[m]}, ..., \tilde{\gamma}^{[n-1]}}$.

          (b) Let $\gamma$ be expressed as
          $\gamma \paren{t} = \paren{x_1 \paren{t}, \dots, x_N \paren{t}}$,
          and let us consider a coordinate transformation $\varPhi$
          and the curve $\hat{\gamma} \coloneqq \varPhi \circ \gamma$.
          By \cref{cor:Bruno_for_curves} \ref{item:cor_Bruno_curve.coord},
          we can compute $\sderiv{\hat{\gamma}}{k}$
          ($1 \leq k \leq 2 m - 1$) as follows:
          \begin{enumerate}[label={(\Roman*)'}]
            \item 
              When $1 \leq k \leq m - 1$, we get $\sderiv{\hat{\gamma}}{k} = \bzero$,
              since $\sderiv{x}{i}_j = 0$ for all $i = 1, \dots, k$ and all $j = 1, \dots, N$.
            \item \label{item:diff.gamm.hat}
              When $m \leq k \leq 2 m - 1$, from \cref{rem:coord_transf.multi_m}, it suffices to take the sum over $\beta \in \mathcal{P}_k$ satisfying
              $2 m > k
                = \sum_{i = m}^{k} i \, \beta_i
                \geq m \, \abs{\beta}$.
              Hence, $\abs{\beta} = 1$, and thus, $\beta = \paren{0, \dots, 0, 1}$.
              Therefore, for each $\paren{q_{i j}} \in \mathcal{Q}_{(0, ..., 0, 1)}$
              there exists $j_0 \in \cparen{1, \dots, N}$
              such that $q_{ij} = \delta_{ik} \, \delta_{jj_0}$ (see \cref{tab:qij_alpha_and_beta}), where $\delta_{i j}$ denotes the Kronecker delta. Consequently, from 
              $\alpha_j = \sum_{i = 1}^{k} q_{i j} = \delta_{j j_0}$ and \cref{eq:Bruno_curve.coord}, we get 
              $\sderiv{\hat{\gamma}}{k} = J_{\varPhi} \paren{\bzero} \, \sderiv{\gamma}{k}$.
          \end{enumerate}
          Hence, we have obtained \ref{item:prop_inv.multi} for coordinate transformations.
          Moreover, for each $m + 1 \leq n \leq 2 m - 1$, if \cref{eq:inv.span} holds, 
          then \ref{item:diff.gamm.hat} implies $\sderiv{\hat{\gamma}}{n}
            \in \Span{\sderiv{\hat{\gamma}}{m}, \dots, \sderiv{\hat{\gamma}}{n - 1}}$.
    \end{proof}

\subsection{Representability of Non-Negative Integers by a Family of Positive Integers}
\label{sec:Frobenius.numbers}
    In the construction of the criteria (specifically in \cref{lem:suff.cond.1:cNR.containment}), 
    we employ notions from elementary number theory.
    It is said that
    ~\cite{Sylvester_1857_partition_of_numbers,Sylvester_1882_subvariants,Sylvester_1884_mathematical_questions}
    contain information about the Frobenius problem.
    In this subsection, 
    let $k \in \geZ{2}$
    and let
    $A \coloneqq \cparen{a_1, \dots, a_k}
      \subset \geZ{2}$.

    We extend the definition of
    the denumerant $\denu{}{a; A}$ given by%
    ~\cite{Sylvester_1857_partition_of_numbers,Liu_Xin_2023_Frobenius_numbers}
    to 
    $\denu{\ge n}{a; A}$,
    with a particular focus on $\denu{\ge 1}{a; A}$ and  $\denu{\ge 2}{a; A}$.

    \begin{defn}
      \label{defn:denumerant}
      Let $n, \, a \in \nnZ$. We define 
      \begin{equation}
        \mathemph{\denu{\geq n}{a; A}}
        \coloneqq
          \# \cparen{%
                \paren{x_1, \dots, x_k}
                \in \paren{\nnZ}^k
              \mvert
                \sum_{i = 1}^{k}
                  a_i \, x_i
                = a
                \quatext{and}
                \sum_{i = 1}^{k}
                  x_i
                \geq n
            },
        \label{eq:defn.denumerant}
      \end{equation}
      where $\#$ denotes the cardinality of a set.

      Let us denote
      \begin{align}
        \mathemph{\cR{n}{A}}
        & \coloneqq
          \cparen{%
              a \in \nnZ
            \mvert
              \denu{\geq n}{a; A}
              > 0
          }
        , \\
        \mathemph{\cNR{n}{A}}
        & \coloneqq
          \nnZ \setminus \cR{n}{A}
        = \cparen{%
              a \in \nnZ
            \mvert
              \denu{\geq n}{a; A}
              = 0
          }
        ,
      \end{align}
      and simply
      $\mathemph{\cR{}{A}}
        \coloneqq \cR{0}{A}$
      and $\mathemph{\cNR{}{A}}
        \coloneqq \cNR{0}{A}$.
      A non-negative integer $a \in \nnZ$
      is said to be \emph{representable} by $A$
      if $a \in \cR{0}{A}$,
      and \emph{not representable} by $A$
      if $a \in \cNR{0}{A}$.
      We also denote $\cR{n}{A}$ and $\cNR{n}{A}$ as $\cR{n}{a_1, \dots, a_k}$ and $\cNR{n}{a_1, \dots, a_k}$, respectively.
    \end{defn}

    Here we introduce some properties of $\cR{n}{A}$ and $\cNR{n}{A}$.
    \begin{prop}
      \label{prop:cR.and.cNR.containment}
      \begin{enumerate}[label={(\arabic*)}]
        \item \label{item:rem.cR.and.cNR:addition}%
          For any $m, n \in \nnZ$,
          $\cR{m + n}{A} = \cR{m}{A} + \cR{n}{A}$.
        \item \label{item:rem.cR.and.cNR:cRn+1.and.cRn}%
          For any $n \in \nnZ$,
          $\cR{n + 1}{A}
            \subset \cR{n}{A}$,
          and thus,
          $\cNR{n}{A}
            \subset \cNR{n + 1}{A}$.
          Moreover, for any $n \in \nnZ$,
          $\cNR{n + 1}{A} \setminus \cNR{n}{A}
            \subset \nnZ$
          is a finite set.
        \item \label{item:rem.cR.and.cNR:cNRn.finite}%
          For any $m, n \in \nnZ$,
          $\cNR{m}{A}$ is a finite set
          if and only if so is $\cNR{n}{A}$.
      \end{enumerate}
    \end{prop}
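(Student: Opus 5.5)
We will work directly from \cref{defn:denumerant}. The set $\cR{n}{A}$ consists of all values $\sum_i a_i x_i$ with $x \in \paren{\nnZ}^k$ and $\abs{x} \geq n$. Equivalently, it is the set of sums of at least $n$ elements of $A$, counted with repetition. Each item then reduces to elementary manipulations of such sums.

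For \ref{item:rem.cR.and.cNR:addition}, both inclusions come from splitting and merging index vectors.
\begin{itemize}
  \item If $a = \sum a_i x_i$ with $\abs{x} \geq m + n$, choose $y \leq x$ componentwise with $\abs{y} = m$; this is possible since $\abs{x} \geq m$. Then $z = x - y$ satisfies $\abs{z} \geq n$, so $a = \sum a_i y_i + \sum a_i z_i \in \cR{m}{A} + \cR{n}{A}$.
  \item Conversely, adding $y$ with $\abs{y} \geq m$ and $z$ with $\abs{z} \geq n$ gives $x = y + z$ with $\abs{x} \geq m + n$.
\end{itemize}

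For \ref{item:rem.cR.and.cNR:cRn+1.and.cRn}, the inclusion $\cR{n+1}{A} \subset \cR{n}{A}$ is immediate, because $\abs{x} \geq n+1$ implies $\abs{x} \geq n$. Taking complements gives $\cNR{n}{A} \subset \cNR{n+1}{A}$.

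For finiteness, write $a_{\max} = \max A$ and $a_{\min} = \min A \geq 2$. Let $a \in \cNR{n+1}{A} \setminus \cNR{n}{A} = \cR{n}{A} \setminus \cR{n+1}{A}$. Then $a$ has a representation $x$ with $\abs{x} \geq n$, but none with $\abs{x} \geq n+1$, so $\abs{x} = n$ exactly. Hence $a = \sum a_i x_i \leq n \, a_{\max}$, so the difference set lies in $\cparen{0, \dots, n a_{\max}}$ and is finite.

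For \ref{item:rem.cR.and.cNR:cNRn.finite}, by symmetry assume $m \leq n$. Iterating \ref{item:rem.cR.and.cNR:cRn+1.and.cRn} gives $\cNR{m}{A} \subset \cNR{n}{A}$, and $\cNR{n}{A} \setminus \cNR{m}{A}$ is a finite union of the finite sets $\cNR{\ell+1}{A} \setminus \cNR{\ell}{A}$ for $m \leq \ell < n$. Therefore $\cNR{m}{A}$ is finite if and only if $\cNR{n}{A}$ is.

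There is no real obstacle. The only point needing care is the bound in \ref{item:rem.cR.and.cNR:cRn+1.and.cRn}, which rests on the observation that every representation of an element of the difference set has exactly $n$ summands.
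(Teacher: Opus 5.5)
Your proof is correct and follows the paper's argument essentially step by step: the same splitting and merging of index vectors for (1), the observation that an element of $\cNR{n+1}{A} \setminus \cNR{n}{A}$ is a sum of exactly $n$ elements of $A$ for (2), and the telescoping union over $m \leq \ell < n$ for (3). Your treatment of (2) is slightly more careful than the paper's, which claims equality with the set of all sums of exactly $n$ elements of $A$ when only your inclusion holds (for $A = \{2, 3\}$ and $n = 2$, the value $6 = 3 + 3 = 2 + 2 + 2$ lies in $\cR{3}{A}$); however, your closing remark should read ``every representation with at least $n$ summands has exactly $n$,'' since shorter representations can exist (for $A = \{2, 4\}$ and $n = 2$, the value $4 = 2 + 2$ lies in the difference set but also has the representation $4 = 4$).
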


    \begin{proof}
        (1) For any
          $a \in \cR{m}{A}$ and $b \in \cR{n}{A}$,
          there exist 
          $\paren{x_1, \dots, x_k}, \paren{y_1, \dots, y_k} \in \paren{\Z_{\ge 0}}^k$
          such that $\sum a_i \, x_i = a$, $\sum a_i \, y_i = b$, $\sum x_i \geq m$, and $\sum y_i \geq n$.
          Then,
          $\paren{x_1 + y_1, \dots, x_k + y_k}$ gives $a + b \in \cR{m + n}{A}$.
          Conversely, for any $a \in \cR{m + n}{A}$, there exists
          $\paren{x_1, \dots, x_k} \in \paren{\Z_{\ge 0}}^k$ such that
          $\sum a_i \, x_i = a$ and $\sum x_i \geq m + n$.
          Then, we can take $\paren{y_1, \dots, y_k} \in \paren{\Z_{\ge 0}}^k$ satisfying 
          $0 \leq y_i \leq x_i$ and $\sum y_i = m$.
          Since $\sum a_i y_i \in \cR{m}{A}$
          and $\sum a_i (x_i - y_i) \in \cR{n}{A}$ hold, 
          we get $a \in \cR{m}{A} + \cR{m}{A}$.
          
        (2) The former part is clear from \cref{defn:denumerant}.
          Moreover, since
          $\cNR{n + 1}{A} \setminus \cNR{n}{A} = \cNR{n + 1}{A} \cap \cR{n}{A}$,
          $a \in \cNR{n + 1}{A} \setminus \cNR{n}{A}$
          if and only if there exists
          $\paren{x_1, \dots, x_k} \in \paren{\Z_{\ge 0}}^k$
          such that
          $\sum a_i \, x_i = a$
          and
          $\sum x_i = n$.
          Hence, 
          $\cNR{n + 1}{A} \setminus \cNR{n}{A}$ equals 
          the finite set $\cparen{\sum a_i \, x_i \mvert \sum x_i = n}$.

        (3) When $m = n$, the assertion is clear.
          When $m \neq n$, by symmetry, we may assume $m < n$. If $\cNR{n}{A}$ is finite, then $\# \cNR{m}{A} \le \# \cNR{n}{A} < \infty$ from the former part of \ref{item:rem.cR.and.cNR:cRn+1.and.cRn}. 
          Assume $\cNR{m}{A}$ is finite. 
          By \ref{item:rem.cR.and.cNR:cRn+1.and.cRn}, we have that
          \begin{equation}
            \cNR{n}{A} 
            = \cNR{m}{A} \cup
                \bigcup_{i = m}^{n - 1}
                  \paren{\cNR{i + 1}{A} \setminus \cNR{i}{A}}
          \end{equation}
          is a finite set.
    \end{proof}

    \begin{fact}[\nscite{Liu_Xin_2023_Frobenius_numbers,Subwattanachai_2024_generalized_Frobenius_number}]
      \label{fact:NR.finite.if.gcd1}%
      The set $\cNR{}{A}$ is finite if and only if $\gcd A = 1$.
    \end{fact}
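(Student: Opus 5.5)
The statement is \cref{fact:NR.finite.if.gcd1}: $\cNR{}{A}$ is finite if and only if $\gcd A = 1$. I would prove the two implications separately, writing $g \coloneqq \gcd A$ throughout.

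\emph{Only if.} Every element of $\cR{}{A}$ has the form $\sum a_i x_i$, so it is a multiple of $g$. If $g \geq 2$, then every integer $a \equiv 1 \pmod g$ lies in $\cNR{}{A}$. There are infinitely many such integers, so $\cNR{}{A}$ is infinite.

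\emph{If.} Assume $g = 1$. The key step is to produce $g_0 \coloneqq \min A \geq 2$ consecutive representable integers.

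1. By B\'ezout's identity there are integers $c_1, \dots, c_k \in \Z$ with $\sum_{i} c_i a_i = 1$.
2. Split the coefficients by sign. Let $P \coloneqq \sum_{c_i > 0} c_i a_i$ and $Q \coloneqq \sum_{c_i < 0} (-c_i) a_i$. Both lie in $\cR{}{A}$, and $P = Q + 1$.
3. Put $M \coloneqq (g_0 - 1) Q$. For each $r \in \cparen{0, \dots, g_0 - 1}$ we have
\begin{equation}
M + r = (g_0 - 1 - r) Q + r P .
\end{equation}
This is a non-negative combination of elements of $\cR{}{A}$, hence lies in $\cR{}{A}$, using that $\cR{}{A}$ is closed under addition (\cref{prop:cR.and.cNR.containment} \ref{item:rem.cR.and.cNR:addition} with $m = n = 0$).
4. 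Since $g_0 \in A$, adding multiples of $g_0$ to the run $M, M + 1, \dots, M + g_0 - 1$ shows that every integer $a \geq M$ is representable.

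Hence $\cNR{}{A} \subset \cparen{0, 1, \dots, M - 1}$, which is finite.

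The main obstacle is the construction of the $g_0$ consecutive representable integers in Step 3. The rest is bookkeeping, since each step is elementary.
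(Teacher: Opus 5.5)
Your proof is correct. The paper gives no proof of this statement; it imports it as a known fact from the Frobenius-number literature (Liu--Xin and Subwattanachai) and then combines it with part (3) of \cref{prop:cR.and.cNR.containment} to obtain \cref{rem:NRn.finite.if.gcd1}.

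Your route is the classical self-contained argument, and every step checks out. The ``only if'' direction is the divisibility obstruction modulo $\gcd A$. For the ``if'' direction, a B\'ezout relation gives $1 = P - Q$ with $P, Q \in \cR{}{A}$. This yields the run of $\min A$ consecutive representable integers $M + r = (\min A - 1 - r)\,Q + r\,P$, which you then propagate by adding multiples of $\min A \in A$. You also get $Q \geq 1$ automatically from $A \subset \Z_{\geq 2}$, though your argument does not need it.

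Your version makes the paper self-contained at this point and gives an explicit, if crude, bound $\max \cNR{}{A} < (\min A - 1)\,Q$. The citation buys brevity and a pointer to much finer results on (generalized) Frobenius numbers. The paper does not need those, since only finiteness is used later, in \cref{thm:A.equivalence.suff.cond.1}.

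Two cosmetic points. Using $g$ for $\gcd A$ and $g_0$ for $\min A$ invites confusion, so another letter for $\min A$ would read better. In the ``only if'' part you should say ``every non-negative integer $a \equiv 1 \pmod g$'', since $\cNR{}{A} \subset \nnZ$.
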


    \begin{rem}
      \label{rem:NRn.finite.if.gcd1}%
      It follows
      from \cref{prop:cR.and.cNR.containment} \ref{item:rem.cR.and.cNR:cNRn.finite}
      and \cref{fact:NR.finite.if.gcd1}
      that for any $n \in \nnZ$,
      $\cNR{n}{A}$ is a finite set
      if and only if $\gcd A = 1$.
    \end{rem}

\section{Sufficient Conditions for a Singularity to Belong to a Given \Apdf-Equivalence Class}
\label{ssec:RN.construct_suff}

    In order to construct sufficient conditions for singularities of curves in $\R^N$ to belong to a given \A-equivalence class, we use the following facts:

    \begin{fact}[The division lemma, see \rscite{Corollary~A.3}{Saji_Umehara_Yamada_2022_singularities}]
        \label{fact:division.lemma}%
        Let $f$ be a $C^{\infty}$-function defined on an open interval $I \subset \R$ containing the origin $0$. If $f(0) = f'(0) = f''(0) = \cdots = f^{(k)}(0) = 0$ for a non-negative integer $k$, then there exists a $C^{\infty}$-function $g$ defined on $I$ such that 
        \begin{equation}
            f(t) = t^{k+1} \, g(t) \quad (t \in I)
        \end{equation}
    \end{fact}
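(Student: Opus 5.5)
The division lemma is a one-variable Taylor formula with integral remainder, so I will prove it directly rather than reduce it to another result. Put $h(t) \coloneqq f^{(k+1)}(t)$. The hypothesis says the Taylor polynomial of $f$ of degree $k$ at $0$ is identically zero. Taylor's theorem with integral remainder then gives, for every $t \in I$ (legitimate because $I$ is an interval containing $0$, so the segment between $0$ and $t$ lies in $I$),
\begin{equation}
  f(t) = \int_0^t \frac{(t-s)^k}{k!}\, f^{(k+1)}(s)\, ds .
\end{equation}
If one prefers not to quote Taylor's theorem, the same identity follows by induction on $k$: write $f(t)=\int_0^t f'(s)\,ds$ and integrate by parts $k$ times, using $f^{(j)}(0)=0$ for $j\le k$ to kill the boundary terms.

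Next substitute $s = tu$ with $u \in [0,1]$. Then $ds = t\,du$ and $(t-s)^k = t^k(1-u)^k$, so
\begin{equation}
  f(t) = t^{k+1} \int_0^1 \frac{(1-u)^k}{k!}\, f^{(k+1)}(tu)\, du \eqqcolon t^{k+1} g(t).
\end{equation}
This identity holds for all $t \in I$, including $t=0$, where both sides vanish.

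It remains to show that $g$ is $C^\infty$ on $I$; this is the only point needing care. The integrand $F(t,u) = \frac{(1-u)^k}{k!} f^{(k+1)}(tu)$ is $C^\infty$ on $I\times[0,1]$. Its $t$-derivatives are $\partial_t^j F = \frac{(1-u)^k}{k!}u^j f^{(k+1+j)}(tu)$, which are continuous and hence bounded on $J\times[0,1]$ for every compact subinterval $J \subset I$. Differentiation under the integral sign is therefore valid to all orders, locally uniformly in $t$. So $g \in C^\infty(I)$, with $g(0) = f^{(k+1)}(0)/(k+1)!$.

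There is no real obstacle beyond justifying differentiation under the integral sign, which compactness of $[0,1]$ handles. The key choice is to use the integral form of the remainder rather than the Lagrange form, because the Lagrange form gives no smoothness of the quotient $f(t)/t^{k+1}$.
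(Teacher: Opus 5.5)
Your proof is correct and complete. The three steps do the job and together give a smooth $g$ on all of $I$ with $g(0)=f^{(k+1)}(0)/(k+1)!$:
\begin{itemize}
\item the integral form of Taylor's remainder, valid on all of $I$ because $I$ is an interval containing $0$;
\item the rescaling $s=tu$, which keeps $tu\in I$;
\item differentiation under the integral sign, justified by continuity of $\partial_t^j F$ on $J\times[0,1]$ for compact $J\subset I$.
\end{itemize}

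The paper gives no proof of this Fact; it simply quotes Corollary~A.3 of the book by Umehara--Saji--Yamada, so there is no in-paper argument to compare against. A common textbook route first proves the case $k=0$ via $f(t)=t\int_0^1 f'(tu)\,du$. It then inducts, using that $f=t\,g_1$ implies $f^{(j)}(0)=j\,g_1^{(j-1)}(0)$, so the vanishing hypotheses pass to $g_1$. Your one-step version avoids the induction and yields $g$ explicitly. The rescaling $s=tu$ you use is the same device the paper employs in its proof of \cref{fact:division.lem.general.order}.
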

  
    The following fact is induced from Whitney's lemma (\!\!\cite{Whitney_1943_Diff_even_fcn}).

    \begin{fact}[\rcite{Lemma~4.8}{Matsushita_2024_classifications_4_5_cusps_arXiv}]
        \label{lem:Matsushita.4.8}%
        Let $k \in \bbZ_{> 0}$.
        For any $C^{\infty}$-function germ
        $f \colon \paren{\R, 0}
            \to \R$,
        there exists a $2^k$-tuple of
        $C^{\infty}$-function germs
        $g_i \colon \paren{\R, 0}
            \to \R$
        $(i \in \{0, 1, 2, 3, \dots, 2^k - 1\})$
        such that
        \begin{equation}
            f \paren{t}
            = \sum_{i = 0}^{2^k - 1}
                t^i \, g_i \paren{t^{2^k}}
            = g_0 \paren{t^{2^k}}
                + t \, g_1 \paren{t^{2^k}}
                + \dots
                + t^{2^k - 1} \, g_{2^k - 1} \paren{t^{2^k}}.
            \label{eq:Matsushita.lem.4.8}
        \end{equation}
    \end{fact}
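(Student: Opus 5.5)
Fix $k$ and $f$. The statement is the $2^k$-fold iterate of Whitney's even/odd splitting, so I will argue by induction on $k$, with Whitney's lemma for $k = 1$ as the only analytic input.

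\emph{Base case $k = 1$.} Write $f(t) = f_{\mathrm{e}}(t) + f_{\mathrm{o}}(t)$, where $f_{\mathrm{e}}(t) \coloneqq \paren{f(t) + f(-t)}/2$ and $f_{\mathrm{o}}(t) \coloneqq \paren{f(t) - f(-t)}/2$. Both parts are smooth germs, the first even and the second odd.

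Since $f_{\mathrm{o}}$ is odd, $f_{\mathrm{o}}(0) = 0$. By \cref{fact:division.lemma} with $k = 0$, we get $f_{\mathrm{o}}(t) = t \, h(t)$ for a smooth $h$. Moreover $h$ is even: for $t \neq 0$ we have $h(-t) = f_{\mathrm{o}}(-t)/(-t) = h(t)$, and continuity extends this to $t = 0$.

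Whitney's lemma \cite{Whitney_1943_Diff_even_fcn} says that every smooth even germ $e$ can be written as $e(t) = g(t^2)$ for a smooth germ $g$. Applying it to $f_{\mathrm{e}}$ and to $h$ gives smooth $g_0, g_1$ with
\begin{equation}
  f(t) = g_0(t^2) + t \, g_1(t^2).
\end{equation}

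\emph{Inductive step.} Assume
\begin{equation}
  f(t) = \sum_{i=0}^{2^k - 1} t^i \, g_i\paren{t^{2^k}}.
\end{equation}
Apply the case $k = 1$ to each germ $g_i$, viewed as a function of its own variable $s$: this gives $g_i(s) = g_{i,0}(s^2) + s \, g_{i,1}(s^2)$. Substituting $s = t^{2^k}$ yields
\begin{equation}
  t^i \, g_i\paren{t^{2^k}} = t^i \, g_{i,0}\paren{t^{2^{k+1}}} + t^{i + 2^k} \, g_{i,1}\paren{t^{2^{k+1}}}.
\end{equation}
As $i$ ranges over $\{0, \dots, 2^k - 1\}$, the exponents $i$ and $i + 2^k$ cover $\{0, \dots, 2^{k+1} - 1\}$ exactly once each. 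Relabelling $g'_i \coloneqq g_{i,0}$ and $g'_{i + 2^k} \coloneqq g_{i,1}$ therefore gives the required decomposition for $k + 1$.

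\emph{Where the difficulty lies.} Everything except Whitney's lemma is bookkeeping. The lemma itself is the only nontrivial point: one must show that $g(u) \coloneqq e\paren{\sqrt{u}}$, extended smoothly to $u < 0$, is smooth at $u = 0$. Since the statement is a cited fact and Whitney's lemma is quoted in the paper, I will invoke it rather than reprove it.
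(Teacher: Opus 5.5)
Your argument is correct, and it is the iteration of Whitney's lemma that the paper alludes to when it says the fact is ``induced from'' \cite{Whitney_1943_Diff_even_fcn}: you split $f$ into even and odd parts, use the division lemma to write the odd part as $t\,h(t)$ with $h$ even, apply Whitney to both even germs, and induct on $k$ with the exponent bookkeeping $i \mapsto i,\ i+2^k$. The paper gives no proof beyond that attribution and the citation of Matsushita's Lemma~4.8, and your write-up is complete.
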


    \begin{lem}
        \label{lem:Matsushita.4.8:derivative}
        Under the decomposition \cref{eq:Matsushita.lem.4.8},
        for any $\lambda, \mu \in \Z_{\geq 0}$
        with $0 \leq \lambda \leq 2^k - 1$,
        \begin{equation}
            \sderiv{f}{\lambda + 2^k \mu}
            = \sderiv{g}{\mu}_{\lambda}.
            \label{eq:Matsushita.lem.4.8:derivative}
        \end{equation}
    \end{lem}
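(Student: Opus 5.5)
The plan is to compare Taylor coefficients of both sides of \cref{eq:Matsushita.lem.4.8} at $t = 0$. Write $K \coloneqq 2^k$. For each $i \in \cparen{0, \dots, K - 1}$ we look at the Taylor expansion of the germ $t \mapsto t^i \, g_i \paren{t^K}$. By Taylor's theorem with remainder, for every $M \in \nnZ$ we have $g_i \paren{s} = \sum_{\nu = 0}^{M} \sderiv{g}{\nu}_i \, s^\nu + \smallo{s^M}$. Substituting $s = t^K$ and multiplying by $t^i$ gives
\begin{equation}
  t^i \, g_i \paren{t^K}
  = \sum_{\nu = 0}^{M} \sderiv{g}{\nu}_i \, t^{i + K \nu} + \smallo{t^{i + K M}}.
\end{equation}
Since the Taylor polynomial of a smooth function at $0$ is characterized by the little-$o$ remainder, the $j$-th Taylor coefficient of $t^i g_i(t^K)$ is determined for all $j \le i + KM$. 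It equals $\sderiv{g}{\nu}_i$ if $j = i + K \nu$, and $0$ if $j \not\equiv i \pmod{K}$.

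Next, sum over $i$ and use linearity of $f \mapsto \sderiv{f}{n}$. Fix $n = \lambda + K \mu$ with $0 \le \lambda \le K - 1$, and take $M \ge \mu$. Among the summands, only the one with $i = \lambda$ can contribute to the coefficient of $t^n$, because $n \equiv \lambda \pmod K$ and the residues $0, \dots, K-1$ are distinct. Its contribution is $\sderiv{g}{\mu}_\lambda$. Hence $\sderiv{f}{\lambda + K \mu} = \sderiv{g}{\mu}_{\lambda}$, which is \cref{eq:Matsushita.lem.4.8:derivative}.

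There is no serious obstacle here. The only point needing care is the justification that Taylor coefficients can be read off from an expansion with a little-$o$ remainder, that is, the uniqueness of the Taylor polynomial for smooth germs. One could instead differentiate directly: the general Leibniz rule combined with \cref{fact:Bruno.formula} for the composition $g_i \circ \paren{t \mapsto t^K}$ gives the same result, since at $t = 0$ only derivatives of the inner monomial of order exactly $K$ survive. The little-$o$ route is shorter, so I would use it.
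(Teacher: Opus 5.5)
Your proposal is correct and is essentially the paper's proof. The paper also expands each $g_i(s)=\sum_{j=0}^{m} a_{ij}s^j+o(s^m)$, substitutes $s=t^{2^k}$, and reads off the coefficient of $t^{\lambda+2^k\mu}$ from $f(t)=\sum_i\sum_j a_{ij}t^{i+2^kj}+o(t^{2^km})$ by uniqueness of the Taylor polynomial.

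One small fix: take $M\ge\mu+1$ rather than $M\ge\mu$, as the paper does. With $M=\mu$ and $\lambda>0$, the degree $\lambda+K\mu$ exceeds $i+KM$ for the summands with $i<\lambda$. Your summand-wise argument, with $M$ arbitrary, already covers this case anyway.
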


    \begin{proof}
        Take $m \in \Z_{\geq \mu + 1}$.
        If $g_i$ has the form
        \begin{equation}
            g_i \paren{s}
            = \sum_{j = 0}^{m} a_{i j} \, s^j + o \paren{s^{m}}
        \end{equation}
        for each $i \in \cparen{0, 1, 2, 3, \dots, 2^k - 1}$,
        then, \cref{eq:Matsushita.lem.4.8} gives
        \begin{align}
            f \paren{t} =
            \sum_{i = 0}^{2^k - 1} 
            \sum_{j = 0}^{m} a_{i j} \, t^{i + 2^k j} + o \paren{t^{2^k m}}.
        \end{align}
        Hence, we have
        $\sderiv{f}{\lambda + 2^k \mu} = a_{\lambda \mu} = \sderiv{g}{\mu}_{\lambda}$.
    \end{proof}

    \begin{defn}
      \label{defn:space.F.of.functions}%
      Let 
      $\gamma = \paren{x_1, \dots, x_N} \colon \paren{\R, 0} \to \paren{\R^N, \bzero}$
      be a curve in $\R^N$.
      Then, we define the sets
      \begin{align}
        \mathemph{\Fcns{1}{i}{\gamma}}
        & \coloneqq
          \cparen{
              f \colon \paren{\R^N, \bzero}
                \to \paren{\R, 0}
            \mvert
              \paren{f \circ \gamma} \paren{t} = t^i
          }, \\
        \mathemph{\Fcns{2}{i}{\gamma}}
        & \coloneqq
          \cparen{%
              f \in \Fcns{1}{i}{\gamma}
            \mvert
              J_f \paren{\bzero}
              = O
              \in M_{1 \times N} \paren{\R}
          }, \\
        \mathemph{\cR{n}{\gamma}}
        & \coloneqq
          \cparen{%
              i \in \poZ
            \mvert
              \Fcns{n}{i}{\gamma}
              \neq \emptyset
          }
        \subset \poZ, \\
        \mathemph{\cNR{n}{\gamma}}
        & \coloneqq
          \poZ \setminus \cR{n}{\gamma}
        = \cparen{%
              i \in \poZ
            \mvert
              \Fcns{n}{i}{\gamma}
              = \emptyset
          }
        \subset \poZ
      \end{align}
      for $n= 1, 2$.
    \end{defn}

    The following lemma is an analogue of \cref{prop:cR.and.cNR.containment}
    \ref{item:rem.cR.and.cNR:addition}
    and
    \ref{item:rem.cR.and.cNR:cRn+1.and.cRn} for curves.

    \begin{lem}
    \label{prop:cR.and.cNR.of.curve}%
      It holds that $\cR{2}{\gamma} \subset \cR{1}{\gamma}$, and thus 
      $\cNR{1}{\gamma} \subset \cNR{2}{\gamma}$.
      Moreover, $\cR{1}{\gamma} + \cR{1}{\gamma} \subset \cR{2}{\gamma}$.
    \end{lem}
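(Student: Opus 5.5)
The first inclusion is immediate from \cref{defn:space.F.of.functions}. By definition $\Fcns{2}{i}{\gamma} \subset \Fcns{1}{i}{\gamma}$ for every $i \in \poZ$. Hence if $i \in \cR{2}{\gamma}$, then $\Fcns{1}{i}{\gamma} \supset \Fcns{2}{i}{\gamma} \neq \emptyset$, so $i \in \cR{1}{\gamma}$. Taking complements in $\poZ$ gives $\cNR{1}{\gamma} \subset \cNR{2}{\gamma}$.

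For the additive statement, I would take $i, j \in \cR{1}{\gamma}$ and choose $f \in \Fcns{1}{i}{\gamma}$ and $g \in \Fcns{1}{j}{\gamma}$. The candidate witness for $i + j \in \cR{2}{\gamma}$ is the product $h \coloneqq f g \colon \paren{\R^N, \bzero} \to \paren{\R, 0}$. This is a $C^{\infty}$ function germ with $h \paren{\bzero} = f \paren{\bzero} \, g \paren{\bzero} = 0$.

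Two properties need to be checked:
\begin{itemize}
  \item $\paren{h \circ \gamma} \paren{t} = \paren{f \circ \gamma} \paren{t} \, \paren{g \circ \gamma} \paren{t} = t^i \, t^j = t^{i + j}$, so $h \in \Fcns{1}{i + j}{\gamma}$.
  \item By the product rule, $J_h \paren{\bzero} = g \paren{\bzero} \, J_f \paren{\bzero} + f \paren{\bzero} \, J_g \paren{\bzero} = O$, because $f \paren{\bzero} = g \paren{\bzero} = 0$.
\end{itemize}
Together these give $h \in \Fcns{2}{i + j}{\gamma}$, hence $i + j \in \cR{2}{\gamma}$.

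No step is a real obstacle. The only point to keep in mind is that elements of $\Fcns{n}{i}{\gamma}$ are required to vanish at the origin, and this is exactly what makes the linear part of the product vanish.
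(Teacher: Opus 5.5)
Your proposal is correct and follows the paper's proof essentially verbatim: the first inclusion comes from $\Fcns{2}{i}{\gamma} \subset \Fcns{1}{i}{\gamma}$. The additive part takes the product of witnesses $f \in \Fcns{1}{i}{\gamma}$ and $g \in \Fcns{1}{j}{\gamma}$ and uses the product rule together with $f(\bzero) = g(\bzero) = 0$ to get a vanishing Jacobian at the origin.
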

    
    \begin{proof}
      The former part is immediate from
      $\Fcns{2}{i}{\gamma} \subset \Fcns{1}{i}{\gamma}$.
      If $i, j \in \cR{1}{\gamma}$,
      then we can take $f_i \in \Fcns{1}{i}{\gamma}$ and $f_j \in \Fcns{1}{j}{\gamma}$.
      Thus, $((f_i \, f_j) \circ \gamma) (t) = t^{i + j}$
      holds with
      $f_i \, f_j \colon \paren{\R^N, \bzero} \to \paren{\R, 0}$
      satisfying $J_{f_i \, f_j} = f_j \, J_{f_i} + f_i \, J_{f_j}$.
      Since
      $f_i \paren{\bzero} = f_j \paren{\bzero} = 0$,
      we get $J_{f_i \, f_j} \paren{\bzero} = O$,
      and hence, $i + j \in \cR{2}{\gamma}$.
    \end{proof}

    We examine the similarity between
    $\cNR{n}{A}$ and $\cNR{n}{\varGamma}$ as follows.

    \begin{prop}
      \label{lem:suff.cond.1:cNR.containment}
      Let $k \in \geZ{2}$ satisfy $2 \leq k \leq N$,
      let
      $A \coloneqq \{a_1, \dots, a_k \} \subset \geZ{2}$,
      and let
      $\varGamma \colon \paren{\R, 0}
        \to \paren{\R^N, \bzero}$
      be a curve of the form%
      \begin{equation}
        \varGamma \paren{t}
        = \paren{X_1 \paren{t}, \dots, X_N \paren{t}}
        = \paren{%
          t^{a_1},
          t^{a_2},
          \dots,
          t^{a_k},
          o \paren{t^{a_k}},
          \dots,
          o \paren{t^{a_k}}
        }
        \in \R^N.
        \label{eq:suff.cond.1:curve.Gamma}
      \end{equation}
      Then, for each $n \in \cparen{1, 2}$,
      the curve $\varGamma$ satisfies
      \begin{equation}
        \cR{n}{A}
        \subset \cR{n}{\varGamma},
        \quatext{and thus,}
        \cNR{n}{\varGamma}
        \subset \cNR{n}{A}.
      \end{equation}
    \end{prop}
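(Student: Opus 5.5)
The plan is to prove the containment $\cR{n}{A} \subset \cR{n}{\varGamma}$ directly, by writing down an explicit monomial function for each representable integer, and then pass to complements.

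First, fix $n \in \cparen{1, 2}$ and $a \in \cR{n}{A}$. By \cref{defn:denumerant}, there is $c = \paren{c_1, \dots, c_k} \in \paren{\nnZ}^k$ with $\sum_{i=1}^{k} a_i \, c_i = a$ and $\abs{c} = \sum_i c_i \geq n \geq 1$. In particular $a \geq 2$, so $a \in \poZ$, which is where $\cR{n}{\varGamma}$ lives. Define the monomial
\begin{equation}
  f \paren{x_1, \dots, x_N} \coloneqq \prod_{i = 1}^{k} x_i^{c_i}.
\end{equation}
It depends only on the first $k$ coordinates. Since $X_i \paren{t} = t^{a_i}$ exactly for $i \leq k$, the $o \paren{t^{a_k}}$ tail in \cref{eq:suff.cond.1:curve.Gamma} plays no role, and
\begin{equation}
  \paren{f \circ \varGamma} \paren{t} = \prod_{i=1}^{k} t^{a_i c_i} = t^{a}.
\end{equation}
Because $\abs{c} \geq 1$, we have $f \paren{\bzero} = 0$, so $f \in \Fcns{1}{a}{\varGamma}$ and hence $a \in \cR{1}{\varGamma}$.

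For $n = 2$, the monomial has total degree $\abs{c} \geq 2$. Every first partial derivative is then a monomial of degree at least $1$ (or zero), so $J_f \paren{\bzero} = O$. Thus $f \in \Fcns{2}{a}{\varGamma}$ and $a \in \cR{2}{\varGamma}$. As an alternative route, \cref{prop:cR.and.cNR.containment} \ref{item:rem.cR.and.cNR:addition} gives $\cR{2}{A} = \cR{1}{A} + \cR{1}{A}$. Combining this with the $n = 1$ case and \cref{prop:cR.and.cNR.of.curve} yields $\cR{2}{A} \subset \cR{1}{\varGamma} + \cR{1}{\varGamma} \subset \cR{2}{\varGamma}$.

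Finally, take complements. We have $\cNR{n}{\varGamma} = \poZ \setminus \cR{n}{\varGamma}$. Since $\cR{n}{A} \subset \cR{n}{\varGamma}$, this set is contained in $\poZ \setminus \cR{n}{A} \subset \nnZ \setminus \cR{n}{A} = \cNR{n}{A}$. The only point needing care is this difference in ambient sets ($\poZ$ versus $\nnZ$), and it is harmless because $0 \notin \cR{n}{A}$ for $n \geq 1$. There is no real obstacle here; the substantive observation is that the first $k$ components of $\varGamma$ are exact monomials, so products of coordinate functions reproduce $t^a$ exactly.
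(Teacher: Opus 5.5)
Your proof is correct and follows the paper's argument. The monomial $\prod_{i=1}^{k} x_i^{c_i}$ handles $n = 1$ exactly as in the paper, and your alternative route for $n = 2$ via $\cR{2}{A} = \cR{1}{A} + \cR{1}{A}$ and \cref{prop:cR.and.cNR.of.curve} is precisely the paper's argument. Your direct observation, that a monomial of total degree at least $2$ has vanishing Jacobian at $\bzero$, is an equally valid shortcut, and your remark on $\poZ$ versus $\nnZ$ in the complement step fills in a detail the paper leaves implicit.
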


    \begin{proof}
      We only prove
      $\cR{n}{A} \subset \cR{n}{\varGamma}$.
      The remaining part follows directly by taking the complement.
      First, let us consider the $n = 1$ case.
          For each $a \in \cR{1}{A}$,
          there exists
          $\paren{c_1, \dots, c_k} \in \paren{\nnZ}^k$
          such that
          $\sum a_i \, c_i = a$ and $\sum c_i \geq 1$
          by \cref{defn:denumerant}.
          Thus, we have
          $(f \circ \varGamma) (t) = t^a$, 
          where
          $f \paren{x_1, \dots, x_N}
            \coloneqq \prod_{i = 1}^{k}
                x_{i}^{c_i}$.
          Hence, we get $f \in \Fcns{1}{a}{\varGamma}$,
          and thus, $a \in \cR{1}{\varGamma}$.
        Next, let us consider the $n = 2$ case.
          Applying 
          \cref{prop:cR.and.cNR.containment}
          \ref{item:rem.cR.and.cNR:addition},
          $\cR{1}{A} \subset \cR{1}{\varGamma}$,
          and
          \cref{prop:cR.and.cNR.of.curve},
          we get
          $\cR{2}{A}
            = \cR{1}{A} + \cR{1}{A}
            \subset \cR{1}{\varGamma} + \cR{1}{\varGamma}
            \subset \cR{2}{\varGamma}$.
    \end{proof}
    
    Now, we construct sufficient conditions for singularities of curves in $\R^N$ to belong to a given \A-equivalence class based on the set $\cNR{2}{\gamma}$.

    \begin{thm}
      \label{thm:general.property.A.equivalence}%
      Let $\varGamma \colon \paren{\R, 0}
        \to \paren{\R^N, \bzero}$
      be a curve in $\R^N$
      with $\cNR{2}{\varGamma}$ finite.
      %
      Then, for any curve
      $M \colon \paren{\R, 0}
        \to \paren{\R^N, \bzero}$
      satisfying
      \begin{equation}
        \sderiv{M}{i} = \bzero
        \quatext{for all}
        i \in \cNR{2}{\varGamma},
        \label{eq:general.property:assumption.1}
      \end{equation}
      the curve $\varGamma + M$ is \cL-equivalent to $\varGamma$.
      In particular, $\varGamma + M$ is \A-equivalent to $\varGamma$.
    \end{thm}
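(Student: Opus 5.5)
The plan is to construct the left-equivalence explicitly as a coordinate transformation of the form $\varPhi = \id - H$, where $H \colon (\R^N, \bzero) \to (\R^N, \bzero)$ has vanishing Jacobian at $\bzero$ and satisfies $H \circ \varGamma = M$. Then $\varPhi \circ (\varGamma + M)$ is close to $\varGamma$, but not exactly equal to it. The cleaner route is to build $\varPhi$ with $\varPhi \circ \varGamma = \varGamma + M$ directly. Since $J_\varPhi(\bzero) = I$, such a $\varPhi$ is a diffeomorphism germ, so $\varGamma + M = \varPhi \circ \varGamma$ is $\cL$-equivalent to $\varGamma$. Thus the whole task reduces to writing each component $M_j$ of $M$ as $h_j \circ \varGamma$ with $h_j \in \mathfrak{m}^2$, i.e.\ $h_j(\bzero) = 0$ and $J_{h_j}(\bzero) = O$.

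Set $c \coloneqq \max \cNR{2}{\varGamma} + 1$, which is finite by hypothesis; if $\cNR{2}{\varGamma}$ is empty, take $c = 1$. Every $i \ge c$ lies in $\cR{2}{\varGamma}$, so there exist $f_i \in \Fcns{2}{i}{\varGamma}$ with $f_i \circ \varGamma = t^i$ and $J_{f_i}(\bzero) = O$.

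First, handle the polynomial part. Write each component as $M_j(t) = \sum_{i < c} M_j^{[i]} t^i + t^{c} r_j(t)$, using \cref{fact:division.lemma} for the remainder. For $i < c$, either $i \in \cNR{2}{\varGamma}$, in which case $M_j^{[i]} = 0$ by \cref{eq:general.property:assumption.1}, or $i \in \cR{2}{\varGamma}$ (note $i\ge 1$ since $M(0)=\bzero$), in which case $M_j^{[i]} t^i = M_j^{[i]} f_i \circ \varGamma$. Hence the polynomial part equals $P_j \circ \varGamma$ with $P_j \coloneqq \sum M_j^{[i]} f_i \in \mathfrak{m}^2$.

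The main obstacle is the flat remainder $t^{c} r_j(t)$: one must show that every smooth germ of sufficiently high order is of the form $h \circ \varGamma$ with $h \in \mathfrak{m}^2$. My first attempt uses a fixed $i_0 \ge c$ and the function $u \coloneqq f_{i_0}$. Writing $t^c r_j(t) = t^c\, g(t)$, I would try to express a general germ as $\sum_{\lambda} t^{\lambda} g_\lambda(t^{m})$ via \cref{lem:Matsushita.4.8} (Whitney). Here one chooses $m = 2^k$ with $m \ge c$, and all $\lambda \in [c, c+m)$, after first absorbing low orders. Then $t^\lambda = f_\lambda \circ \varGamma$ and $g_\lambda(t^m) = g_\lambda(f_m \circ \varGamma)$, so
\begin{equation}
  t^{c} r_j(t) = \Bigl(\sum_{\lambda} f_\lambda \cdot (g_\lambda \circ f_m)\Bigr) \circ \varGamma ,
\end{equation}
and each summand lies in $\mathfrak{m}^2$ because $f_\lambda \in \mathfrak{m}^2$. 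To make the indices fit, I would apply \cref{lem:Matsushita.4.8} to $r_j$, writing $r_j(t) = \sum_{i=0}^{m-1} t^i g_i(t^m)$. Then $t^c r_j(t) = \sum_{i} t^{c+i} g_i(t^m)$, and $c + i \ge c$ lies in $\cR{2}{\varGamma}$; this is exactly why the threshold $c$ is needed. I expect verifying this decomposition carefully, together with smoothness of $g_i\circ f_m$ as a germ on $\R^N$ (which is clear since it is a composition of smooth germs), to be the only genuinely nontrivial point.

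Finally, set $h_j \coloneqq P_j + \sum_i f_{c+i}\,(g_{i}\circ f_m)$ and $\varPhi \coloneqq \id + (h_1, \dots, h_N)$. Then $J_\varPhi(\bzero) = I$, so $\varPhi$ is a diffeomorphism germ by the inverse function theorem, and $\varPhi \circ \varGamma = \varGamma + M$. Taking $\varPhi^{-1}$ shows $\varGamma + M$ is $\cL$-equivalent, hence $\A$-equivalent, to $\varGamma$.
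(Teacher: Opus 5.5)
Your proof is correct, and its core mechanism is the paper's. You write $M = \psi \circ \varGamma$ with $\psi(\bzero) = \bzero$ and $J_\psi(\bzero) = O$, where $\psi$ is a sum of products $f \cdot (g \circ f')$ with $f$ having vanishing $1$-jet and $g$ coming from the Whitney decomposition of \cref{lem:Matsushita.4.8}. You then set $\varPhi = \id + \psi$.

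The difference lies in how the low orders are handled:
\begin{itemize}
\item The paper applies \cref{lem:Matsushita.4.8} to $M$ itself, using the smallest power $2^l \in \cR{1}{\varGamma}$. For each residue class $r$ modulo $2^l$ it takes the first $j_r$ with $r + 2^l j_r \in \cR{2}{\varGamma}$. The hypothesis on $M$ is transferred to the pieces $M_r$ through \cref{lem:Matsushita.4.8:derivative}, and \cref{fact:division.lemma} gives $M_r(s) = s^{j_r} \tilde{M}_r(s)$. This yields a uniform formula for $\psi$ with only $2^l + 1$ auxiliary functions.
\item You instead cut $M$ at the threshold $c = \max \cNR{2}{\varGamma} + 1$. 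Below $c$ the hypothesis is used term by term. Whitney is applied only to $r_j$, where $t^{c} r_j(t)$ is the remainder, and nothing needs to vanish there because every exponent $\geq c$ lies in $\cR{2}{\varGamma}$.
\end{itemize}
Your route makes \cref{lem:Matsushita.4.8:derivative} and the residue-class minima unnecessary. The price is more auxiliary functions: $f_i$ for every representable $i < c$, plus $f_c, \dots, f_{c+m-1}$ and $f_m$.

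Three small remarks:
\begin{itemize}
\item You only need $m = 2^k \in \cR{1}{\varGamma}$, not $m \geq c$.
\item The case $\cNR{2}{\varGamma} = \emptyset$ never occurs. For any $f$ with $J_f(\bzero) = O$ we have $(f \circ \varGamma)'(0) = J_f(\bzero)\, \varGamma'(0) = 0$, so $1 \in \cNR{2}{\varGamma}$.
\item The exploratory ``first attempt'' with $u = f_{i_0}$ plays no role and can be dropped.
\end{itemize}
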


    \begin{proof}
      The assumption and \cref{prop:cR.and.cNR.of.curve}
      give that $\cNR{2}{\varGamma}$ and $\cNR{1}{\varGamma}$ are finite.
      Thus, we set
      \begin{equation}
        l
        \coloneqq
          \min \cparen{\lambda \in \poZ
              \mvert
                2^{\lambda} \in \cR{1}{\varGamma}
            }
      \end{equation}
      and take $f \in \Fcns{1}{2^l}{\varGamma}$.
      Also, for each $r \in \cparen{0, 1, \dots, 2^l - 1}$,
      we set
      \begin{equation}
        j_r
        \coloneqq
          \min \cparen{j \in \nnZ
              \mvert
                r + 2^l j \in \cR{2}{\varGamma}
            }
      \end{equation}
      and take $f_r \in \Fcns{2}{r + 2^l j}{\varGamma}$.
      By \cref{lem:Matsushita.4.8},
      there exist curves
      $M_r \colon \paren{\R, 0}
        \to \paren{\R^N, \bzero}$
      ($r \in \cparen{0, 1, \dots, 2^l - 1}$)
      such that
      \begin{equation}
          M \paren{t}
          = \sum_{r = 0}^{2^l - 1}
            t^{r} \, M_{r} \paren{t^{2^l}}.
          \label{eq:pf:general.property6:eq.1}
      \end{equation}
      Fix $r \in \{0, 1, \dots, 2^l-1\}$.
      Then, by the assumption
      \cref{eq:general.property:assumption.1}
      and \cref{lem:Matsushita.4.8:derivative},
      we have
      $\sderiv{M}{j}_{r} = \sderiv{M}{r + 2^l j} = \bm{0}$ for all
      $j \in \cparen{0, \dots, j_r - 1}$,
      since $r + 2^l j \in \cNR{2}{\varGamma}$.
      Thus, by \cref{fact:division.lemma},
      there exist curves
      $\tilde{M}_{r} \colon \paren{\R, 0} \to \paren{\R^N, \bzero}$
      such that
      $M_r \paren{s} = s^{j_r} \, \tilde{M}_{r} \paren{s}$.
      Substituting these results into
      \cref{eq:pf:general.property6:eq.1},
      we obtain
      \begin{equation}
        M \paren{t}
        = \sum_{r = 0}^{2^l - 1}
          t^{r + 2^l j_r} \,
          \tilde{M}_{r} \paren{t^{2^l}}.
      \end{equation}
      Hence, the $C^{\infty}$-map
      $\psi \colon \paren{\R^N, \bzero} \to \R^N$
      defined by
      \begin{equation}
        \psi \paren{\bm{x}}
        \coloneqq
          \sum_{r = 0}^{2^l - 1}
            f_r \paren{\bm{x}} \,
            \tilde{M}_r \paren{f \paren{\bm{x}}}
      \end{equation}
      satisfies
      \begin{equation}
        M (t)
        = \sum_{r = 0}^{2^l - 1}
          t^{r + 2^l j_r} \, \tilde{M}_{r} (t^{2^l})
        = \sum_{r = 0}^{2^l - 1}
          (f_r \circ \varGamma) (t)
          \cdot (\tilde{M}_{r} \circ f \circ \varGamma) (t)
        = (\psi \circ \varGamma) (t).
      \end{equation}
      Moreover, since $f_r\paren{\bzero} = 0$
      for each $r$, we get
      $\psi \paren{\bzero} = \bzero$,
      and $J_{f_r} \paren{\bzero} = O$ also implies
      \begin{align}
        J_{\psi} \paren{\bzero}
         = \sum_{r = 0}^{2^l - 1}
            \paren{%
              \paren{\tilde{M}_r \circ f} \paren{\bzero} \,
                J_{f_r} \paren{\bzero}
              + f_r \paren{\bzero} \,
                J_{M_r \circ f} \paren{\bzero}
            }
        = O.
      \end{align}
      Now, the $C^{\infty}$-map
      $\varPsi \coloneqq \id[\R^N] + \psi 
        \colon \paren{\R^N, \bzero} \to \paren{\R^N, \bzero}$
      satisfies
      $\varPsi \circ \varGamma
        = \varGamma + \paren{\psi \circ \varGamma}
        = \varGamma + M$
      and $J_{\varPsi} \paren{\bzero}$ is the identity matrix.
      Therefore,
      $\varPsi \colon \paren{\R^N, \bzero}
        \to \paren{\R^N, \bzero}$
      is a coordinate transformation,
      and thus, $\varGamma + M = \varPsi \circ \varGamma$
      is $\mathcal{L}$-equivalent to $\varGamma$.
    \end{proof}

    When $\varGamma$ takes a special form,
    \cref{thm:general.property.A.equivalence}
    can be applied in a specific way as follows.

    \begin{cor}
      \label{thm:A.equivalence.suff.cond.1}%
      Let $k \in \geZ{2}$ and $A$ be as in \cref{lem:suff.cond.1:cNR.containment} and suppose that $\gcd A = 1$.
      Let a curve $\varGamma$ be as in \cref{eq:suff.cond.1:curve.Gamma}.
      Then, for any curve
      $M \colon \paren{\R, 0}
        \to \paren{\R^N, \bzero}$
      satisfying
      \begin{equation}
        \sderiv{M}{i} = \bzero
        \quatext{for all}
        i \in \cNR{2}{A},
        \label{eq:suff.cond.1:assumption.1}
      \end{equation}
      the curve $\varGamma + M$ is \cL-equivalent to $\varGamma$.
      In particular, $\varGamma + M$ is \A-equivalent to $\varGamma$.
    \end{cor}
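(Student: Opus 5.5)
The plan is to reduce the corollary directly to \cref{thm:general.property.A.equivalence}. That theorem needs two inputs: that $\cNR{2}{\varGamma}$ is finite, and that $\sderiv{M}{i} = \bzero$ for all $i \in \cNR{2}{\varGamma}$. Both will follow from the containment $\cNR{2}{\varGamma} \subset \cNR{2}{A}$ of \cref{lem:suff.cond.1:cNR.containment}, applied with $n = 2$. That proposition applies because $\varGamma$ has exactly the form \cref{eq:suff.cond.1:curve.Gamma}.

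For finiteness, the hypothesis $\gcd A = 1$ together with \cref{rem:NRn.finite.if.gcd1} (with $n = 2$) shows that $\cNR{2}{A}$ is finite. Hence its subset $\cNR{2}{\varGamma}$ is finite as well.

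One bookkeeping point needs care. $\cNR{2}{A}$ is defined as a subset of $\nnZ$, while $\cNR{2}{\varGamma}$ is a subset of $\poZ$. The containment from \cref{lem:suff.cond.1:cNR.containment} is obtained by complementing $\cR{2}{A} \subset \cR{2}{\varGamma}$. This complement step is harmless if we intersect with $\poZ$: the index $0$ plays no role, since $\sderiv{M}{0} = M(0) = \bzero$ automatically.

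For the vanishing condition, take any $i \in \cNR{2}{\varGamma}$. Then $i \in \cNR{2}{A}$, so the assumption \cref{eq:suff.cond.1:assumption.1} gives $\sderiv{M}{i} = \bzero$. This is exactly \cref{eq:general.property:assumption.1}.

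Invoking \cref{thm:general.property.A.equivalence} then shows that $\varGamma + M$ is \cL-equivalent, and hence \A-equivalent, to $\varGamma$. There is no real obstacle here. The only point needing care is the $\nnZ$ versus $\poZ$ mismatch in the complement step, which is handled as above.
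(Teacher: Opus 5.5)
Your proposal is correct and follows the paper's proof: $\gcd A = 1$ makes $\cNR{2}{A}$ finite, the containment $\cNR{2}{\varGamma} \subset \cNR{2}{A}$ then makes $\cNR{2}{\varGamma}$ finite, and the general theorem finishes the argument. You also spell out how the vanishing hypothesis passes from $\cNR{2}{A}$ to $\cNR{2}{\varGamma}$, which the paper leaves implicit; your $\nnZ$ versus $\poZ$ check is fine but not needed, since $\cNR{2}{\varGamma} \subset \poZ$ already lies inside $\cNR{2}{A}$.
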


    \begin{proof}
      Note that
      $\gcd A = 1$
      implies $\# \cNR{2}{A} < \infty$
      as stated in \cref{rem:NRn.finite.if.gcd1}.
      Furthermore, by \cref{lem:suff.cond.1:cNR.containment},
      $\cNR{2}{\varGamma}$ is finite.
      Therefore, \cref{thm:general.property.A.equivalence} gives the conclusion.
    \end{proof}

\section{Examples of the Construction of Criteria for Singularities}
\label{ssec:RN.examples}

  In this section, we provide examples of
  the construction of criteria for several \A-equivalence classes of singularities, following the procedure outlined in \cref{sec:Introduction}.   
  As the criteria in each subsection demonstrate,
  $\paren{3, 4, 5}$-cusps and $\paren{3, 5, 7}$-cusps in \cref{conj:criteria.3.4.5.and.3.4.and.3.5.7.and.3.5}, and $\paren{4, 5, 7}$-cusps, $\paren{4, 5, 6}$-cusps, $\paren{4, 5, 11}$-cusps, and $\paren{4, 5_{\pm 7}, 11}$-cusps in \cref{conj:criteria.4.5.and.other} only appear when $N \geq 3$.
  Also, $\paren{4, 5, 6, 7}$-cusps in \cref{conj:criteria.4.5.and.other} only appear when $N \geq 4$.

    In the construction of sufficient conditions for a singularity to belong to a given \A-equivalence class,
    we use the following elementary lemma.
    In the sequel,
      let $\paren{\be_i}_{1 \leq i \leq N}$
      be the standard basis of $\R^N$.
    \begin{lem}
      \label{lem:lin_change_to_standard_basis}%
      Let $1 \leq k \leq N$
      and let $(\bv_1, \dots, \bv_k) \in (\R^N)^k$
      be a linearly independent
      tuple of vectors.
      Then, there exists
      $T \in \GL{N, \R}$
      such that
      $
        T \bv_i
        = \be_i
        \, 
        (1 \leq i \leq k).
      $
    
    \end{lem}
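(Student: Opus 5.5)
The plan is to extend the given linearly independent tuple to a basis of $\R^N$ and then invert the matrix whose columns are these basis vectors.

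First, since $(\bv_1, \dots, \bv_k)$ is linearly independent in the $N$-dimensional space $\R^N$, the basis extension theorem gives vectors $\bv_{k+1}, \dots, \bv_N \in \R^N$ such that $(\bv_1, \dots, \bv_N)$ is a basis of $\R^N$. Concretely, I would adjoin standard basis vectors $\be_j$ one at a time, keeping $\be_j$ only when it does not already lie in the span of the vectors chosen so far. Since the $\be_j$ span $\R^N$, this process ends with exactly $N$ vectors.

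Next, let $P \in \Matrices{N}{\R}$ be the matrix whose $i$-th column is $\bv_i$. Then $P \be_i = \bv_i$ for every $i$. Because the columns of $P$ form a basis, $P$ is invertible, so $P \in \GL{N, \R}$. Setting $T \coloneqq P^{-1}$ gives $T \in \GL{N, \R}$ and $T \bv_i = P^{-1} P \be_i = \be_i$ for $1 \leq i \leq k$, which is the claim.

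There is no real obstacle here. The only step that needs any justification is the basis extension, and it is standard linear algebra. In the case $k = N$, no vectors are added and $P$ is invertible directly.
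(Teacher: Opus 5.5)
Your proof is correct: extending $(\bv_1, \dots, \bv_k)$ to a basis $(\bv_1, \dots, \bv_N)$ and taking $T = P^{-1}$, where $P$ is the matrix with columns $\bv_1, \dots, \bv_N$, is exactly the standard argument. The paper gives no proof of its own and simply calls this an elementary lemma, so there is nothing further to compare.
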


  \subsection{Singularities of Multiplicity $2$ in \Rpdf[N]}
  \label{sec:Singularities.multiplicity.2}%

    In this subsection, we construct criteria for classifying singularities of multiplicity $2$ in $\R^N$,
    in particular,
    $\paren{2, 3}$-cusps,
    $\paren{2, 5}$-cusps,
    and $\paren{2, 7}$-cusps in $\R^N$.
    Our goal in this subsection is to prove the following theorem:

    \begin{thm}
      \label{thm:criteria.2.3.and.2.5.and.2.7}%
      Let $\gamma \colon \paren{\R, 0} \to \paren{\R^N, \bzero}$
      be a curve in $\R^N$.
      Then, the curve
      $\gamma$ is \Aqui-equivalent to
         $(1)\, \Cusp{2, 3} ( t ) = \paren{t^2, t^3, 0, \dots, 0}$,
         $(2) \, \Cusp{2, 5} ( t ) = \paren{t^2, t^5, 0, \dots, 0}$,
         $(3) \, \Cusp{2, 7} ( t ) = \paren{t^2, t^7, 0, \dots, 0}$
      if and only if
      $\sderiv{\gamma}{1}
        = \bzero
        \neq \sderiv{\gamma}{2}$
      and respectively,
      \begin{enumerate}[label={(\arabic*)}]
        \item \label{item:criteria.2.3.criteria}%
          $\sderiv{\gamma}{3}
            \notin \Span{\sderiv{\gamma}{2}}$,
        \item \label{item:criteria.2.5.criteria}%
          there exists $\lambda \in \R$ such that
          $
            \sderiv{\gamma}{3}
            = \lambda \, \sderiv{\gamma}{2}
          $
          and
          $
            \sderiv{\gamma}{5}
              - 2 \, \lambda \, \sderiv{\gamma}{4}
            \notin \Span{\sderiv{\gamma}{2}},
          $
        \item \label{item:criteria.2.7.criteria}%
          there exist $\lambda, \mu \in \R$ such that
            $\sderiv{\gamma}{3} = \lambda \, \sderiv{\gamma}{2}$,
            $\sderiv{\gamma}{5}
              - 2 \, \lambda \, \sderiv{\gamma}{4}
            = \mu \, \sderiv{\gamma}{2}$,
            and
            $\sderiv{\gamma}{7}
              - 3 \, \lambda \, \sderiv{\gamma}{6}
              + \paren{3 \, \lambda^3 - 2 \, \mu} \, \sderiv{\gamma}{4}
            \notin \Span{\sderiv{\gamma}{2}}$.
      \end{enumerate}
    \end{thm}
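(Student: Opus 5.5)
We follow the four-step procedure of \cref{sec:Introduction}. For sufficiency, the base tool is \cref{thm:A.equivalence.suff.cond.1} with $A = \{2, 2p+1\}$ for $p = 1, 2, 3$.

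\textbf{The sets $\cNR{2}{A}$.} Recall that $\cR{1}{2,2p+1}$ consists of the positive integers that are even or at least $2p+1$. Hence $\cR{2}{A} = \cR{1}{A} + \cR{1}{A}$ contains every even integer $\geq 4$ and every integer $\geq 2p+3$. It follows that
\begin{equation}
\cNR{2}{A} = \{1, 2, 3, 5, \dots, 2p+1\}.
\end{equation}

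\textbf{Sufficiency.} Assume the Taylor conditions of the theorem. By \cref{lem:lin_change_to_standard_basis} we may take $\sderiv{\gamma}{2} = \be_1$.

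We then use a parameter change $t \mapsto t + a_2 t^2 + \cdots$ to eliminate the $\be_1$-components that spoil $\cNR{2}{A}$. Here \cref{cor:Bruno_for_curves}\ref{item:cor_Bruno_curve.para} gives $\sderiv{\tilde\gamma}{3} = \sderiv{\gamma}{3} + 2a_2\,\sderiv{\gamma}{2}$, so choosing $a_2 = -\lambda/2$ kills $\sderiv{\gamma}{3}$ in cases (2) and (3). Similarly, $\sderiv{\tilde\gamma}{5}$ and $\sderiv{\tilde\gamma}{7}$ become the stated combinations $\sderiv{\gamma}{5} - 2\lambda\,\sderiv{\gamma}{4}$ and $\sderiv{\gamma}{7} - 3\lambda\,\sderiv{\gamma}{6} + (3\lambda^3 - 2\mu)\,\sderiv{\gamma}{4}$, modulo $\Span{\sderiv{\gamma}{2}}$. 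The $\be_1$-parts of these are absorbed by further choices of $a_4$ and $a_6$; there is no freedom to be found in $a_3$ and $a_5$, which only shift even coefficients.

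Next we apply a coordinate transformation $x_1 \mapsto x_1 + c\,x_1^2 + \cdots$ and a linear map fixing $\be_1$. After this, $\sderiv{\gamma}{2} = \be_1$, the odd coefficients of order $< 2p+1$ vanish, and $\sderiv{\gamma}{2p+1} = \be_2$. This is possible because the hypothesized vector is not in $\Span{\be_1}$.

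The remaining task is to make the curve agree with $\Gamma(t) = (t^2, t^{2p+1}, o, \dots)$ at all indices in $\cNR{2}{A}$, including the exact leading terms. The $x_1$ component is normalized to $t^2$ exactly by the parameter change $\tilde t = \sqrt{x_1(t)}$. After that, the $x_2$ component is $t^{2p+1}$ plus terms that are either even or of order $> 2p+1$. Even terms $t^{2j}$ equal $x_1^j$ and are removed by the coordinate change $x_2 \mapsto x_2 - \sum c_j x_1^j$. Then \cref{thm:A.equivalence.suff.cond.1} (or the division-lemma argument) finishes.

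I expect the main obstacle to be checking that the Bruno-formula bookkeeping produces exactly the coefficients $-2\lambda$, $-3\lambda$ and $3\lambda^3 - 2\mu$. Rather than carrying this out abstractly, I would first normalize $x_1 = t^2$ by reparametrization. In that chart the odd part of each remaining component becomes $t^3 g(t^2)$, and the conditions reduce to the vanishing of $g$'s Taylor coefficients. I would then track how the substitution $\tilde t = t\sqrt{1 + \lambda t + \cdots}$ transforms the odd coefficients, expanding to order $7$.

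\textbf{Necessity.} First, $\sderiv{\gamma}{1} = \bzero \neq \sderiv{\gamma}{2}$ says that the multiplicity is $2$, which is invariant by \cref{prop:multi_and_lin_dep.invariant}. The three conditions are mutually exclusive, and every curve of multiplicity $2$ either satisfies one of them or has higher degeneracy. So it suffices to show that each condition is \A-invariant. Condition (1) is invariant by \cref{prop:multi_and_lin_dep.invariant}\ref{item:prop_inv.lin_dep} with $n = 3$. For (2) and (3), I would verify invariance directly under a parameter change and under a coordinate change, using \cref{cor:Bruno_for_curves}. Under a parameter change the expressions transform by a nonzero scalar plus an element of $\Span{\sderiv{\gamma}{2}}$, with $\lambda$ and $\mu$ transforming accordingly. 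Under a coordinate change, the quadratic terms of $\varPhi$ contribute only multiples of $\partial^2\varPhi\,(\sderiv{\gamma}{2},\cdot)$, and these cancel in the combination. This invariance computation is where return to Step 2 might be needed if it fails.
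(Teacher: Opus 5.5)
Your proposal is correct and follows essentially the paper's route: compute $\cNR{2}{2, 2p+1}$, kill the odd $\be_1$-coefficients with your $a_2, a_4, a_6$ (the paper's $c_1, c_3, c_5$), apply a linear map and \cref{thm:A.equivalence.suff.cond.1}, then check invariance under parameter changes and coordinate transformations via \cref{cor:Bruno_for_curves}. Your extra normalizations ($x_1 = t^2$ exactly, removing even terms) are harmless but unnecessary, because after the linear map the curve already agrees with $\Cusp{2, 2p+1}$ at every index in $\cNR{2}{2, 2p+1}$. One correction to your necessity sketch for (3): the cubic part of $\varPhi$ also contributes $\frac{1}{6}\, D^3\varPhi(\bzero)(\sderiv{\gamma}{2}, \sderiv{\gamma}{2}, \sderiv{\gamma}{2})$ to $\sderiv{\hat{\gamma}}{6}$ and $\frac{\lambda}{2}$ times the same vector to $\sderiv{\hat{\gamma}}{7}$, and these cancel in $\sderiv{\hat{\gamma}}{7} - 3 \lambda \, \sderiv{\hat{\gamma}}{6}$ alongside the quadratic terms.
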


    \begin{proof}
        We prove the claim following the procedure we outlined in \cref{sec:Introduction}.
              First, we prove the ``if'' part of the claim.
              
              \textbf{Step 1}.
              We note that for any $n \in \poZ$,
              \begin{equation} \label{eq:NR.2.2,2n+1}
                \cNR{2}{2, 2 n + 1} = \cparen{0, 2} \cup \cparen{1, 3, \dots, 2 n + 1}.
              \end{equation}

            \textbf{Step 2}.
              Let $\gamma$ be of multiplicity $2$ at $t = 0$, 
              let $T_1$ be the invertible matrix defined in \cref{lem:lin_change_to_standard_basis}. 
              Then, the curve 
              $\varGamma \paren{t} = T_1 \, \gamma \paren{t}$
              satisfies
              $\sderiv{\varGamma}{1} = \bzero$ and
              $\sderiv{\varGamma}{2} = \be_1$.
            Hence, the curve $\varGamma$ takes the form
            \begin{equation}
              \label{eq:multi_2.standard_form}%
              \varGamma \paren{t}
                = \left(%
                      t^2 + \sum_{i = 3}^{7} a_{i - 2} \, t^i,
                      0, \dots, 0
                    \right)
                + \sum_{i = 3}^{7} t^i \, \bv_i
                + \smallo{t^7}
            \end{equation}
            with constants $a_{i - 2} \in \R$
            and $\bv_i \in \Span{\be_2, \dots, \be_N}$
            ($3 \leq i \leq 7$).

              Now, we show that, if
                 (1) $\bv_3 \neq \bzero$,
                 (2) $\bv_3 = \bzero$
                  and $\bv_5 - 2 a_1 \, \bv_4 \neq \bzero$, or
                 (3) $\bv_3 = \bzero$,
                  $\bv_5 - 2 a_1 \, \bv_4 = \bzero$,
                  and $\bv_7 - 3 a_1 \, \bv_6 + \paren{3 a_1^3 + 4 a_1 a_2 - 2 a_3} \, \bv_4 \neq \bzero$,
              then the curve $\varGamma$
              expressed in the form of \cref{eq:multi_2.standard_form}
              is \Aqui-equivalent to
                (1) $\Cusp{2, 3}$, 
                (2) $\Cusp{2, 5}$, or
                (3) $\Cusp{2, 7}$, 
              respectively.

              This can be verified as follows: 
              there exist $c_{i} \in \R$ ($i = 1, 3, 5$)
              such that the parameter change
              $\phi \paren{s}
                = s + \sum_{i = 1, 3, 5} c_{i} \, s^{i + 1}$
              yields
              \begin{equation}
                \paren{\varGamma \circ \phi} \paren{s}
                = \paren{%
                      s^2 + \tilde{a}_2 \, s^4 + \tilde{a}_4 \, s^6,
                      0, \dots, 0
                    }
                  + \sum_{i = 3}^{7} s^i \, \tilde{\bv}_i
                  + \smallo{s^7}
              \end{equation}
              with constants $\tilde{a}_2, \tilde{a}_4 \in \R$
              and $\tilde{\bv}_i \in \Span{\be_2, \dots, \be_N}$
              ($3 \leq i \leq 7$).
              Indeed, by a direct substitution, we get
              \begin{align}
                \paren{\varGamma \circ \phi} \paren{s}
                & = \bigl(%
                      s^2
                      + \left(a_1 + 2 c_1\right) s^3
                      + \left(a_2 + 3 a_1 c_1 + c_1^2\right) s^4 \\
                  & \qquad 
                      + \left(a_3 + 4 a_2 c_1 + 3 a_1 c_1^2 + 2 c_3\right) s^5 \\
                  & \qquad 
                      + \left(a_4 + 5 a_3 c_1 + 6 a_2 c_1^2 + a_1 \left(c_1^3 + 3 c_3\right) + 2 c_1 c_3\right) s^6 \\
                  & \qquad 
                      + \left(a_5 + 6 a_4 c_1 + 10 a_3 c_1^2 + 4 a_2 \left(c_1^3 + c_3\right) + 6 a_1 c_1 c_3 + 2 c_5\right) s^7, 
                      0, \dots, 0
                    \bigr) \\
                  & \quad
                   + s^3 \bv_3
                   + s^4 \left(\bv_4 + 3 c_1 \bv_3\right)
                   + s^5 \left(\bv_5 + 4 c_1 \bv_4 + 3 c_1^2 \bv_3\right) \\
                  & \quad
                   + s^6 \left(\bv_6 + 5 c_1 \bv_5 + 6 c_1^2 \bv_4 + \left(c_1^3 + 3 c_3\right) \bv_3\right) \\
                  & \quad
                   + s^7 \left(\bv_7 + 6 c_1 \bv_6 + 10 c_1^2 \bv_5 + 4 \left(c_1^3 + c_3\right) \bv_4 + 6 c_1 c_3 \bv_3\right)
                   + \smallo{s^7}.
              \end{align}
              Therefore, solving the system of equations
              \begin{align}
                  a_1 + 2 c_1
                  & = a_3 + 4 a_2 c_1 + 3 a_1 c_1^2 + 2 c_3 \\
                  & = a_5 + 6 a_4 c_1 + 10 a_3 c_1^2 + 4 a_2 \left(c_1^3 + c_3\right) + 6 a_1 c_1 c_3 + 2 c_5
                  = 0,
              \end{align}
              we get
              \begin{gather}
                c_{1} = - \frac{1}{2} a_1
                \quad
                c_{3} = \frac{1}{8} \paren{- 3 a_1^3 + 8 a_1 a_2 - 4 a_3}, \\
                c_{5} = \frac{1}{16} \left(-9 a_1^5 + 40 a_1^3 a_2 - 32 a_1^2 a_3 - 32 a_1 a_2^2 + 24 a_1 a_4 + 16 a_2 a_3 - 8 a_5\right).
              \end{gather}
              Also, we have
              \begin{gather}
                  \tilde{\bv}_3
                  = \bv_3, 
                \quad
                  \tilde{\bv}_5
                   = \bv_5 + 4 c_1 \bv_4 + 3 c_1^2 \bv_3, \\
                 \tilde{\bv}_7
                  = \bv_7 + 6 c_1 \bv_6 + 10 c_1^2 \bv_5 + 4 \paren{c_1^3 + c_3} \bv_4 + 6 c_1 c_3 \bv_3.
              \end{gather}
              
              (1) If $\bv_3 \neq \bzero$,
                  then $\be_1$ and $\tilde{\bv}_3$ are linearly independent, and hence,
                  by using a matrix $T_2 \in \GL{N,\R}$ obtained in \cref{lem:lin_change_to_standard_basis} from $(\bm{e}_1, \tilde{\bm{v}}_3)$,
                  the curve $T_2 \, \paren{\varGamma \circ \phi} \paren{s}$ satisfies
                  $\sderiv{(T_2 \, (\varGamma \circ \phi))}{1} = \bzero$, 
                  $\sderiv{(T_2 \, (\varGamma \circ \phi))}{2} = \be_1$, 
                  and $\sderiv{(T_2 \, (\varGamma \circ \phi))}{3} = \be_2$.
                  Hence, it takes the form
                    $T_2 \, \paren{\varGamma \circ \phi} \paren{s}
                    = \paren{s^2, s^3, 0, \dots, 0}
                      + \smallo{s^3}$.
                  This curve is \Aqui-equivalent to $\Cusp{2, 3}$ 
                  by \cref{thm:A.equivalence.suff.cond.1} and \eqref{eq:NR.2.2,2n+1}.
                  
                (2) If $\bv_3 = \bzero$
                  and $\bv_5 - 2 a_1 \bv_4 \neq \bzero$,
                  then $\tilde{\bv}_3 = \bzero$
                  and
                  $\tilde{\bv}_5
                    = \bv_5 - 2 a_1 \, \bv_4
                    \neq \bzero$.
                  Thus, $\be_1$ and $\tilde{\bv}_5$ are linearly independent,
                  and hence,
                  using
                  $T_3
                    \in \GL{N, \R}$
                  in \cref{lem:lin_change_to_standard_basis},
                  we obtain
                  \begin{equation}
                    T_3 \, \paren{\varGamma \circ \phi} \paren{s}
                    = \paren{s^2, s^5, 0, \dots, 0}
                      + s^4 \, \bw_4
                      + \smallo{s^5}
                  \end{equation}
                  with a constant $\bw_4 \in \R^N$.
                  This curve is \Aqui-equivalent to $\Cusp{2, 5}$ by
                  \cref{thm:A.equivalence.suff.cond.1}
                  and \eqref{eq:NR.2.2,2n+1}.
                
                (3) If $\bv_3 = \bzero$,
                  $\bv_5 - 2 a_1 \, \bv_4 = \bzero$,
                  and $\bv_7 - 3 a_1 \, \bv_6 + \paren{3 a_1^3 + 4 a_1 a_2 - 2 a_3} \, \bv_4 \neq \bzero$,
                  then $\tilde{\bv}_3 = \tilde{\bv}_5 = \bzero$
                  and
                  $\tilde{\bv}_7
                    = \bv_7 - 3 a_1 \, \bv_6
                        + \paren{3 a_1^3 + 4 a_1 a_2 - 2 a_3} \, \bv_4 
                    \neq \bzero$.
                  Thus, $\be_1$ and $\tilde{\bv}_7$ are linearly independent,
                  and hence,
                  using
                  $T_4
                    \in \GL{N, \R}$
                  in \cref{lem:lin_change_to_standard_basis},
                  we obtain
                  \begin{equation}
                    T_4 \, \paren{\varGamma \circ \phi} \paren{s}
                    = \paren{s^2, s^7, 0, \dots, 0}
                      + s^4 \, \bw_4
                      + s^6 \, \bw_6
                      + \smallo{s^7}
                  \end{equation}
                  with constants $\bw_4, \bw_6 \in \R^N$.
                  This curve is \Aqui-equivalent to $\Cusp{2, 7}$ by
                  \cref{thm:A.equivalence.suff.cond.1} and \eqref{eq:NR.2.2,2n+1}.

          \textbf{Step 3}.
              We note that $\sderiv{\varGamma}{2} = \be_1$ and
              $\sderiv{\varGamma}{i}
                = a_{i - 2} \, \be_1 + \bv_i
                = a_{i - 2} \, \sderiv{\varGamma}{2} + \bv_i$ 
            for each $3 \leq i \leq 7$.

                (1) Since
                  $\sderiv{\varGamma}{3}
                    = a_1 \, \sderiv{\varGamma}{2} + \bv_3$,
                  the condition $\sderiv{\varGamma}{3} \notin \Span{\sderiv{\varGamma}{2}}$
                  means $\bv_3 \neq \bzero$.
                  Hence, by \textbf{Step 2},
                  the curve $\varGamma$ is \Aqui-equivalent to $\Cusp{2, 3}$.
                  
                (2) The condition
                  $\sderiv{\varGamma}{3} = \lambda \, \sderiv{\varGamma}{2}$
                  means $a_1 = \lambda$ and $\bv_3 = \bzero$.
                  Then, 
                  $
                    \sderiv{\varGamma}{5}
                      - 2 \lambda \, \sderiv{\varGamma}{4}
                     = \paren{a_3 - 2 \lambda \, a_2} \, \sderiv{\varGamma}{2}
                      + \paren{\bv_5 - 2 a_1 \, \bv_4}.
                  $
                  Thus, the condition
                  $\sderiv{\varGamma}{5}
                      - 2 \, \lambda \, \sderiv{\varGamma}{4}
                    \notin \Span{\sderiv{\varGamma}{2}}$
                  means $\bv_5 - 2 a_1 \, \bv_4 \neq \bzero$.
                  Hence, by \textbf{Step 2},
                  the curve $\varGamma$ is \Aqui-equivalent to $\Cusp{2, 5}$.
                  
                (3) Note that $a_1 = \lambda$ and $\bv_3 = \bzero$.
                  The condition 
                  $\sderiv{\varGamma}{5}
                      - 2 \lambda \, \sderiv{\gamma}{4}
                    = \mu \, \sderiv{\varGamma}{2}$
                  means $a_3 - 2 \lambda \, a_2 = \mu$
                  and $\bv_5 - 2 a_1 \, \bv_4 = \bzero$.
                  Then,
                  \begin{align}
                    \sderiv{\varGamma}{7}
                    &  - 3 \lambda \, \sderiv{\varGamma}{6}
                       + \paren{3 \lambda^3 - 2 \mu} \, \sderiv{\varGamma}{4} \\
                    & = \paren{%
                            a_5 - 3 \lambda \, a_4
                            + \paren{3 \lambda^3 + 4 \lambda \, a_2 - 2 a_3} \, a_2
                          } \, \sderiv{\varGamma}{2} \\
                    &    \qquad \qquad
                    + \paren{%
                            \bv_7 - 3 a_1 \, \bv_6
                            + \paren{3 a_1^3 + 4 a_1 \, a_2 - 2 a_3} \, \bv_4
                          }.
                  \end{align}
                  Thus, the condition
                  $\sderiv{\varGamma}{7}
                      - 3 \lambda \, \sderiv{\varGamma}{6}
                      + \paren{3 \lambda^3 - 2 \mu} \, \sderiv{\varGamma}{4}
                    \notin \Span{\sderiv{\varGamma}{2}}$
                  means
                  $\bv_7 - 3 a_1 \, \bv_6
                      + \paren{3 a_1^3 + 4 a_1 \, a_2 - 2 a_3} \, \bv_4
                    \neq \bzero$.
                  By \textbf{Step 2},
                  the curve $\varGamma$ is \Aqui-equivalent to $\Cusp{2, 7}$.
                  This completes the proof of the ``if'' part of the theorem.
              
              \textbf{Step 4}.
              Finally, we prove the ``only if'' part of the theorem.
              Each of the standard singularities
              $\Cusp{2, 3}$, $\Cusp{2, 5}$, and $\Cusp{2, 7}$
              satisfies the respective conditions stated in the claim.
              Hence, we only need to verify that the conditions stated in the claim are invariant under \A-equivalence.
              Applying \cref{prop:multi_and_lin_dep.invariant} for $m = 2$ and $n = 3$ directly shows \ref{item:criteria.2.3.criteria} in the claim.
              It remains to verify the invariance of conditions \ref{item:criteria.2.5.criteria} and \ref{item:criteria.2.7.criteria}.
              Let us separately show the invariance under
             (a) parameter changes and (b) coordinate transformations.

                (a) We take a parameter change $\phi$
                  and consider the curve $\tilde{\gamma} \coloneqq \gamma \circ \phi$.
                  Direct computation gives that
                  $
                    \sderiv{\tilde{\gamma}}{2}
                    = \paren{\sderiv{\phi}{1}}^2 \, \sderiv{\gamma}{2}
                  $ 
                   and
                  $ 
                    \sderiv{\tilde{\gamma}}{3}
                    = \paren{\sderiv{\phi}{1}}^3 \, \sderiv{\gamma}{3}
                      + 2 \sderiv{\phi}{1} \, \sderiv{\phi}{2} \, \sderiv{\gamma}{2}
                  $    
                  by \cref{cor:Bruno_for_curves} \ref{item:cor_Bruno_curve.para}.
                  
                  Assume that there exists $\lambda \in \R$ satisfying 
                  \ref{item:criteria.2.5.criteria}.
                  Then,
                  $
                    \tilde{\lambda}
                    \coloneqq \lambda \, \sderiv{\phi}{1}
                      + \sfrac{2 \sderiv{\phi}{2}}{\sderiv{\phi}{1}}
                  $
                  satisfies
                  $\sderiv{\tilde{\gamma}}{3}
                    = \tilde{\lambda} \, \sderiv{\tilde{\gamma}}{2}$.
                Then,
                  \begin{equation}
                    \sderiv{\tilde{\gamma}}{5}
                      - 2 \, \tilde{\lambda} \, \sderiv{\tilde{\gamma}}{4}
                    \in \paren{\sderiv{\phi}{1}}^5 \,
                        \paren{%
                          \sderiv{\gamma}{5}
                          - 2 \, \lambda \, \sderiv{\gamma}{4}
                        }
                      + \Span{\sderiv{\gamma}{2}},
                  \end{equation}
                  which shows that the condition \ref{item:criteria.2.5.criteria}
                  is invariant under parameter changes.
                  
                  On the other hand,
                  if there exist $\lambda, \mu \in \R$ satisfying \ref{item:criteria.2.7.criteria},
                  then taking the above $\tilde{\lambda}$, we know that
                  \begin{align}
                    \tilde{\mu}
                    &\coloneqq
                      \mu \, \paren{\sderiv{\phi}{1}}^3
                      - 6 \lambda^2 \, \sderiv{\phi}{1} \, \sderiv{\phi}{2}
                      - \lambda \, \sderiv{\phi}{3} \\
                    &\hspace{3cm}  
                    + \frac{2 \sderiv{\phi}{4} - 11 \lambda \, \paren{\sderiv{\phi}{2}}^2}{\sderiv{\phi}{1}}
                      - \frac{6 \sderiv{\phi}{2} \, \sderiv{\phi}{3}}{\paren{\sderiv{\phi}{1}}^2}
                      - \frac{4 \paren{\sderiv{\phi}{2}}^3}{\paren{\sderiv{\phi}{1}}^3}
                  \end{align}
                  satisfies
                  $\sderiv{\tilde{\gamma}}{5}
                      - 2 \, \tilde{\lambda} \, \sderiv{\tilde{\gamma}}{4}
                    = \tilde{\mu} \, \sderiv{\tilde{\gamma}}{2}$.
                  Moreover, we observe
                  \begin{align}
                    &\sderiv{\tilde{\gamma}}{7}
                        - 3 \, \tilde{\lambda} \, \sderiv{\tilde{\gamma}}{6}
                        + \paren{3 \, \tilde{\lambda}^3 - 2 \, \tilde{\mu}} \, \sderiv{\tilde{\gamma}}{4} \\
                    & \hspace{2cm}
                    \in \paren{\sderiv{\phi}{1}}^7 \,
                        \paren{%
                          \sderiv{\gamma}{7}
                          - 3 \, \lambda \, \sderiv{\gamma}{6}
                          + \paren{3 \, \lambda^3 - 2 \, \mu} \, \sderiv{\gamma}{4}
                        }
                      + \Span{\sderiv{\gamma}{2}},
                  \end{align}
                  and hence,
                  the condition \ref{item:criteria.2.7.criteria}
                  is invariant under parameter changes.
                  
                (b) We take a coordinate transformation $\varPhi$
                  and consider the curve $\hat{\gamma} \coloneqq \varPhi \circ \gamma$.
                  By
                  \cref{cor:Bruno_for_curves} \ref{item:cor_Bruno_curve.coord},
                  and \cref{rem:coord_transf.multi_m}
                  with $m = 2$, we have
                  $
                    \sderiv{\hat{\gamma}}{2}
                     = J_{\varPhi} \, \sderiv{\gamma}{2},
                     \, 
                    \sderiv{\hat{\gamma}}{3}
                    = J_{\varPhi} \, \sderiv{\gamma}{3}.
                  $
                  Assume that there exists $\lambda \in \R$ such that
                  $\sderiv{\gamma}{3} = \lambda \, \sderiv{\gamma}{2}$.
                  Then, 
                  $
                    \sderiv{\hat{\gamma}}{3}
                    = J_{\varPhi} \, \sderiv{\gamma}{3}
                    = \lambda \, J_{\varPhi} \, \sderiv{\gamma}{2}
                    = \lambda \, \sderiv{\hat{\gamma}}{2}.
                  $

                  Let us show the invariance of the condition \ref{item:criteria.2.5.criteria}
                  under coordinate transformations.
                  We suppose that $\sderiv{\gamma}{5} - 2 \lambda \sderiv{\gamma}{4} 
                  \not \in \Span{\sderiv{\gamma}{2}}$.
                  Since 
                  
                  \begin{align}
                    \sderiv{\hat{\gamma}}{4}
                    & = J_{\varPhi} \, \sderiv{\gamma}{4}
                        + \frac{1}{2}
                          \sum_{i, j = 1}^{N}
                          \pfrac{^2 \varPhi}{x_i \, \partial x_j} \,
                          \sderiv{x}{2}_i \,
                          \sderiv{x}{2}_j, \quad
                    \sderiv{\hat{\gamma}}{5}
                     = J_{\varPhi} \, \sderiv{\gamma}{5}
                        + \lambda
                          \sum_{i, j = 1}^{N}
                            \pfrac{^2 \varPhi}{x_i \, \partial x_j} \,
                            \sderiv{x}{2}_i \,
                            \sderiv{x}{2}_j,
                  \end{align}
                  we obtain
                  $
                    \sderiv{\hat{\gamma}}{5}
                        - 2 \, \lambda \, \sderiv{\hat{\gamma}}{4}
                     = J_{\varPhi}
                        \paren{%
                          \sderiv{\gamma}{5}
                          - 2 \, \lambda \, \sderiv{\gamma}{4}
                        },
                  $ 
                  which implies the assertion.

                  Let us finally show the invariance of the 
                  condition \ref{item:criteria.2.7.criteria} under
                  coordinate transformations.

                  If there exist the above $\lambda$ and $\mu \in \R$ such that
                  $\sderiv{\gamma}{5}
                      - 2 \, \lambda \, \sderiv{\gamma}{4}
                    = \mu \, \sderiv{\gamma}{2}$ and 
                    $\sderiv{\gamma}{7}
                      - 3 \, \lambda \, \sderiv{\gamma}{6}
                      + \paren{3 \, \lambda^3 - 2 \, \mu} \, \sderiv{\gamma}{4}
                    \notin \Span{\sderiv{\gamma}{2}}$, 
                  then
                  \begin{align}
                    \sderiv{\hat{\gamma}}{6}
                    & = J_{\varPhi} \, \sderiv{\gamma}{6}
                       + \sum_{i, j = 1}^{N}
                          \pfrac{^2 \varPhi}{x_i \, \partial x_j} \,
                          \sderiv{x}{2}_i \,
                          \sderiv{x}{4}_j
                        + \frac{\lambda^2}{2}
                          \sum_{i, j = 1}^{N}
                            \pfrac{^2 \varPhi}{x_i \, \partial x_j} \,
                            \sderiv{x}{2}_i \,
                            \sderiv{x}{2}_j \\
                    &   \hspace{4cm} + \frac{1}{6}
                          \sum_{i, j, k = 1}^{N}
                            \pfrac{^3 \varPhi}{x_i \, \partial x_j \, \partial x_k} \,
                            \sderiv{x}{2}_i \,
                            \sderiv{x}{2}_j \,
                            \sderiv{x}{2}_k,\\
                    \sderiv{\hat{\gamma}}{7}
                    & = J_{\varPhi} \, \sderiv{\gamma}{7}
                        + \sum_{i, j = 1}^{N}
                          \pfrac{^2 \varPhi}{x_i \, \partial x_j} \,
                          \paren{%
                            3 \lambda \, \sderiv{x}{2}_i \, \sderiv{x}{4}_j
                            + \mu \, \sderiv{x}{2}_i \, \sderiv{x}{2}_j
                          } \\
                        & \hspace{4cm} + \frac{\lambda}{2}
                            \sum_{i, j, k = 1}^{N}
                              \pfrac{^3 \varPhi}{x_i \, \partial x_j \, \partial x_k} \,
                              \sderiv{x}{2}_i \,
                              \sderiv{x}{2}_j \,
                              \sderiv{x}{2}_k.
                  \end{align}
                  yield 
                  $
                    \sderiv{\hat{\gamma}}{7}
                        - 3 \, \lambda \, \sderiv{\hat{\gamma}}{6}
                        + \paren{3 \, \lambda^3 - 2 \, \mu} \, \sderiv{\hat{\gamma}}{4}
                    = J_{\varPhi}
                        \paren{%
                          \sderiv{\gamma}{7}
                          - 3 \, \lambda \, \sderiv{\gamma}{6}
                          + \paren{3 \, \lambda^3 - 2 \, \mu} \, \sderiv{\gamma}{4}
                        },
                $
                which gives the conclusion and
              completes the proof of \cref{thm:criteria.2.3.and.2.5.and.2.7}.
    \end{proof}

  \subsection{Singularities of Multiplicity $3$ in \Rpdf[N]}
  \label{sec:Singularities.multiplicity.3}%

    This subsection is devoted to
    constructing criteria for classifying singularities of multiplicity $3$,
    in particular,
    $\paren{3, 4}$-cusps,
    $\paren{3, 4, 5}$-cusps,
    $\paren{3, 5}$-cusps,
    $\paren{3, 5, 7}$-cusps,
    $\paren{3, 7}$-cusps,
    $\paren{3, 7_{+8}}$-cusps,
    $\paren{3, 7, 8}$-cusps,
    $\paren{3, 7, 11}$-cusps, and
    $\paren{3, 7_{+8}, 11}$-cusps.

    \begin{thm}
        \label{conj:criteria.3.4.5.and.3.4.and.3.5.7.and.3.5}%
        Let $\gamma \colon \paren{\R, 0} \to \paren{\R^N, \bzero}$
        be a curve in $\R^N$.
        \begin{enumerate}[label={(\arabic*)}]
            \item \label{item:thm.criteria:3.4.5.and.3.4}
                The curve $\gamma$ is \A-equivalent to
                    $\mathrm{(i)} \ \Cusp{3, 4, 5}
                            ( t ) = \paren{t^3, t^4, t^5, 0, \dots, 0}$, 
                    $\mathrm{(ii)} \ \Cusp{3, 4}
                            ( t ) = \paren{t^3, t^4, 0, \dots, 0}$
                if and only if
                $
                    \sderiv{\gamma}{1}
                    = \sderiv{\gamma}{2}
                    = \bzero
                    \neq \sderiv{\gamma}{3}
                $, 
                $
                    \sderiv{\gamma}{4}
                    \notin \Span{\sderiv{\gamma}{3}},
                $
                and respectively, 
                \begin{enumerate}[label={(\roman*)}]
                    \item $\sderiv{\gamma}{5}
                            \notin \Span{\sderiv{\gamma}{3}, \sderiv{\gamma}{4}}$,
                    \item $\sderiv{\gamma}{5}
                            \in \Span{\sderiv{\gamma}{3}, \sderiv{\gamma}{4}}$.
                \end{enumerate}
            \item \label{item:thm.criteria:3.5.7.and.3.5}%
                The curve $\gamma$ is \A-equivalent to
                    $\mathrm{(i)} \ \Cusp{3, 5, 7}(t) = \paren{t^3, t^5, t^7, 0, \dots, 0}$,
                    $\mathrm{(ii)} \ \Cusp{3, 5}(t) = \paren{t^3, t^5, 0, \dots, 0}$
                if and only if there exists $\lambda \in \R$
                such that
                \begin{equation}
                    \sderiv{\gamma}{1}
                    = \sderiv{\gamma}{2}
                    = \bzero
                    \neq \sderiv{\gamma}{3},
                    \quad
                    \sderiv{\gamma}{4}
                    = \lambda \, \sderiv{\gamma}{3},
                    \quad
                    \sderiv{\gamma}{5}
                    \notin \Span{\sderiv{\gamma}{3}},
                    \label{eq:criteria.3.5.7.and.3.5:conditions.1}
                \end{equation}
                and respectively,
                \begin{enumerate}[label={(\roman*)}]
                    \item $\sderiv{\gamma}{7}
                          - 2 \, \lambda \, \sderiv{\gamma}{6}
                        \notin \Span{\sderiv{\gamma}{3}, \sderiv{\gamma}{5}}$,
                    \item $\sderiv{\gamma}{7}
                          - 2 \, \lambda \, \sderiv{\gamma}{6}
                        \in \Span{\sderiv{\gamma}{3}, \sderiv{\gamma}{5}}$.
                \end{enumerate}
        \end{enumerate}
    \end{thm}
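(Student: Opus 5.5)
We follow the four-step procedure of \cref{sec:Introduction}, exactly as in the proof of \cref{thm:criteria.2.3.and.2.5.and.2.7}.

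\textbf{Step 1.} We compute the relevant sets $\cNR{2}{A}$. Sums of at least two elements of $\{3,4,5\}$ give every integer $\geq 6$, so $\cNR{2}{3,4,5}=\{0,1,2,3,4,5\}$. For $\{3,4\}$ the two-fold sums are $6,7,8,\dots$ except $5$; hence $\cNR{2}{3,4}=\{0,\dots,5\}$. For $\{3,5,7\}$ the two-fold sums are $6,8,9,10,\dots$, so $\cNR{2}{3,5,7}=\{0,\dots,5,7\}$. Similarly $\cNR{2}{3,5}=\{0,\dots,5,7\}$. In particular, by \cref{thm:A.equivalence.suff.cond.1}, any curve $(t^3,t^4,t^5,0,\dots)+o(t^5)$ is a $(3,4,5)$-cusp, and any $(t^3,t^4,0,\dots)+o(t^5)$ is a $(3,4)$-cusp. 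For $\{3,5\}$ we need the terms of order $4$, $6$ (allowed) and $7$ to be controlled: a curve $(t^3,t^5,0,\dots)+t^6\bw_6+o(t^7)$ is a $(3,5)$-cusp, and $(t^3,t^5,t^7,0,\dots)+t^6\bw_6+o(t^7)$ is a $(3,5,7)$-cusp. Note that $4\in\cNR{2}{\cdot}$ in every case, so order-$4$ terms must also be killed or absorbed into a model coordinate.

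\textbf{Steps 2 and 3.} For \ref{item:thm.criteria:3.4.5.and.3.4}, the conditions say that $\sderiv{\gamma}{3},\sderiv{\gamma}{4}$ (and in case (i) also $\sderiv{\gamma}{5}$) are independent. We use \cref{lem:lin_change_to_standard_basis} to send them to $\be_1,\be_2$ (and $\be_3$). Then $\gamma=(t^3,t^4,t^5,\dots)+o(t^5)$ in case (i). In case (ii), $\sderiv{\gamma}{5}=a\be_1+b\be_2$; a parameter change $t\mapsto t+ct^2$ shifts the $t^4$ component of the first coordinate and the $t^5$ component of the second, and a linear shear removes the rest. We aim for $(t^3+\dots,t^4+\dots,\dots)$ with all $t^5$ coefficients zero; more simply, we apply the shear $x_1\mapsto x_1-ax_1\cdot 0$. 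The cleanest route is to do the parameter change first to kill the $\be_2$-part of $\sderiv{}{5}$, then apply the linear map $\be_1$-shear $x_1\mapsto x_1 - (\text{coef})x_2$-type corrections. These adjust only coefficients in $\Span{\be_1,\be_2}$ at orders $4,5$, which is exactly what is needed.

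For \ref{item:thm.criteria:3.5.7.and.3.5}, we normalize $\sderiv{\gamma}{3}=\be_1$, so the curve is $(t^3+\sum a_{i-3}t^i,0,\dots)+\sum_{i\geq 4} t^i\bv_i$ with $\bv_4=\bzero$ (because $\sderiv{\gamma}{4}=\lambda\sderiv{\gamma}{3}$, so $a_1=\lambda$) and $\bv_5\neq\bzero$. The parameter change $\phi(s)=s+c_1s^2+c_3s^4$ with $c_1=-\lambda/3$ kills the $s^4$ term of the first coordinate and makes the order-$7$ transverse part equal to $\bv_7+5c_1\bv_6+\dots$. We expect this to become $\bv_7-2\lambda\bv_6$ modulo $\Span{\bv_5}$ after a suitable choice of $c_3$ and after using the coordinate change $x_1\mapsto x_1+\kappa x_1^2$-type terms (note $6\in\cR{2}{}$). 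We then send $\be_1,\tilde{\bv}_5$ (and $\tilde{\bv}_7$ in case (i)) to standard basis vectors. In case (ii), the $\be_2$-component at order $7$ is removed by another parameter adjustment or by the change $x_1\mapsto x_1 + \text{const}\cdot x_2\cdot$ (order $8$, harmless), and the remaining $\be_1$ and $\be_2$ order-$7$ coefficients are dealt with by a reparametrization $s\mapsto s+cs^5$. This gives the normal form of Step~1.

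\textbf{Step 4.} For necessity, we check that the model curves satisfy the conditions and that the conditions are \A-invariant. For the multiplicity and for the membership conditions $\sderiv{\gamma}{4}\in\Span{\sderiv{\gamma}{3}}$ and $\sderiv{\gamma}{5}\in\Span{\sderiv{\gamma}{3},\sderiv{\gamma}{4}}$, this follows from \cref{prop:multi_and_lin_dep.invariant} with $m=3$, $n=4,5$. The remaining invariants are $\sderiv{\gamma}{5}\notin\Span{\sderiv{\gamma}{3}}$ when $\sderiv{\gamma}{4}=\lambda\sderiv{\gamma}{3}$, and the order-$7$ condition. As in \cref{thm:criteria.2.3.and.2.5.and.2.7}, we treat parameter changes via \cref{cor:Bruno_for_curves}\ref{item:cor_Bruno_curve.para}, setting $\tilde\lambda=\lambda\sderiv{\phi}{1}+3\sderiv{\phi}{2}/\sderiv{\phi}{1}$ (hmm, to be computed). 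We treat coordinate changes via \cref{cor:Bruno_for_curves}\ref{item:cor_Bruno_curve.coord}: up to order $7$ the only quadratic contributions are at orders $6$ ($\sderiv{x}{3}\sderiv{x}{3}$) and $7$ ($2\sderiv{x}{3}\sderiv{x}{4}=2\lambda\sderiv{x}{3}\sderiv{x}{3}$). Hence $\sderiv{\hat\gamma}{7}-2\lambda\sderiv{\hat\gamma}{6}=J_\varPhi(\sderiv{\gamma}{7}-2\lambda\sderiv{\gamma}{6})$, which also explains the coefficient $2$.

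\textbf{Main obstacle.} The hardest part is the parameter-change invariance of the order-$7$ condition in (2). Here $\sderiv{\tilde\gamma}{7}-2\tilde\lambda\sderiv{\tilde\gamma}{6}$ must equal $(\sderiv{\phi}{1})^7(\sderiv{\gamma}{7}-2\lambda\sderiv{\gamma}{6})$ modulo $\Span{\sderiv{\gamma}{3},\sderiv{\gamma}{5}}$. This requires carefully tracking the contributions of $\sderiv{\gamma}{4}$ and $\sderiv{\gamma}{6}$ through Fa\`a di Bruno's formula and verifying that all stray terms lie in that span.
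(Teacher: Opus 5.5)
Your route is the paper's. You use the same sets $\cNR{2}{3,4}=\cNR{2}{3,4,5}=\{0,\dots,5\}$ and $\cNR{2}{3,5}=\cNR{2}{3,5,7}=\{0,\dots,5,7\}$, linear normalization by \cref{lem:lin_change_to_standard_basis} followed by \cref{thm:A.equivalence.suff.cond.1}, and the same identity $\sderiv{\hat\gamma}{7}-2\lambda\sderiv{\hat\gamma}{6}=J_{\varPhi}(\sderiv{\gamma}{7}-2\lambda\sderiv{\gamma}{6})$ for coordinate changes. However, the reductions in the two ``(ii)'' cases fail as written.

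In (1)(ii), take $\sderiv{\varGamma}{5}=a\,\be_1+b\,\be_2$. The substitution $t=s+cs^2$ and a shear $x_1\mapsto x_1+p\,x_2$ (in either order) give $\bigl(s^3+(3c+p)s^4+(3c^2+4cp+pb+a)s^5,\ s^4+(4c+b)s^5\bigr)$ up to $\smallo{s^5}$. Killing the $s^4$-term and the second $s^5$-term forces $c=-b/4$ and $p=3b/4$, which leaves $(a+3b^2/16)\,s^5$ in the first coordinate. Since $5\in\cNR{2}{3,4}$, this term must vanish, and only a cubic term in the reparametrization can remove it. The paper uses $\phi(s)=s-\frac{b}{4}s^2-\frac{16a+3b^2}{48}s^3$ with $x_1\mapsto x_1+\frac{3b}{4}x_2$.

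In (2), the $s^7$-coefficient coming from $t^6\bv_6$ is $6c_1\bv_6$ (from $\phi^6=s^6+6c_1s^7+\cdots$), not $5c_1\bv_6$. So $c_1=-\lambda/3$ already gives exactly $\bv_7-2\lambda\bv_6$ modulo $\bv_5$, and no $c_3s^4$ or $x_1^2$-correction is needed. For case (ii), normalize $\sderiv{\varGamma}{5}=\be_2$ first. Then:
\begin{itemize}
\item the $\be_2$-component at order $7$ is removed by a cubic term $c_2s^3$ (contributing $5c_2$ through $\phi^5$);
\item the $\be_1$-component this creates at order $5$ is removed by the shear $x_1\mapsto x_1+p_1x_2$;
\item the $\be_1$-component at order $7$ is removed by $c_4s^5$ (contributing $3c_4$ through $\phi^3$).
\end{itemize}
This is the paper's $\phi(s)=s+c_1s^2+c_2s^3+c_4s^5$ with $\varPhi=(x_1+p_1x_2,x_2,\dots,x_N)$. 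By contrast, a change of $x_1$ cannot affect the $\be_2$-component, and $s\mapsto s+cs^5$ changes only the $\be_1$-component at order $7$.

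What you call the main obstacle is a short computation, and it is the only part of Step~4 still missing. The condition $\sderiv{\gamma}{5}\notin\Span{\sderiv{\gamma}{3}}$ is already covered by \cref{prop:multi_and_lin_dep.invariant} with $n=5$, because $\Span{\sderiv{\gamma}{3},\sderiv{\gamma}{4}}=\Span{\sderiv{\gamma}{3}}$. Expanding the powers $\phi^k$ and using $\sderiv{\gamma}{4}=\lambda\sderiv{\gamma}{3}$, one finds, modulo $\Span{\sderiv{\gamma}{3},\sderiv{\gamma}{5}}$,
\[
  \sderiv{\tilde\gamma}{6}\equiv(\sderiv{\phi}{1})^6\,\sderiv{\gamma}{6},
  \qquad
  \sderiv{\tilde\gamma}{7}\equiv(\sderiv{\phi}{1})^7\,\sderiv{\gamma}{7}+6(\sderiv{\phi}{1})^5\sderiv{\phi}{2}\,\sderiv{\gamma}{6}.
\]
With your (correct) $\tilde\lambda=\lambda\sderiv{\phi}{1}+3\sderiv{\phi}{2}/\sderiv{\phi}{1}$, the $\sderiv{\gamma}{6}$-terms cancel, so $\sderiv{\tilde\gamma}{7}-2\tilde\lambda\sderiv{\tilde\gamma}{6}\equiv(\sderiv{\phi}{1})^7(\sderiv{\gamma}{7}-2\lambda\sderiv{\gamma}{6})$. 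Moreover $\sderiv{\tilde\gamma}{3}=(\sderiv{\phi}{1})^3\sderiv{\gamma}{3}$ and $\sderiv{\tilde\gamma}{5}\in(\sderiv{\phi}{1})^5\sderiv{\gamma}{5}+\Span{\sderiv{\gamma}{3}}$, so the span is unchanged, and this gives the invariance. With these repairs your outline becomes the paper's proof.
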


    \begin{proof}
        We prove the claim following the procedure we outlined in \cref{sec:Introduction}.
        First, we prove the ``if'' part of (1).
        
        \textbf{Step 1 of (1)}.
        We observe
        \begin{equation} \label{eq:NR.2.3,4.NR.2.3,4,5}
            \cNR{2}{3, 4} = \cNR{2}{3, 4, 5} = \cparen{0, 1, 2, 3, 4, 5}.
        \end{equation}
        
        \textbf{Steps 2 and 3 of (1)}.
                (i) If $\gamma$ satisfies
                  $\sderiv{\gamma}{4}
                    \notin \Span{\sderiv{\gamma}{3}}$
                  and
                  $\sderiv{\gamma}{5}
                    \notin
                      \Span{%
                        \sderiv{\gamma}{3},
                        \sderiv{\gamma}{4}
                      }$,
                  then the curve
                  $\varGamma \paren{t}
                    = T_1 \,
                        \gamma \paren{t}$,
                  obtained by 
                  $T_1 \in \GL{N, \R}$
                  given in \cref{lem:lin_change_to_standard_basis},
                  satisfies
                  $\sderiv{\varGamma}{1}
                    = \sderiv{\varGamma}{2}
                    = \bzero$ 
                  and 
                  $\sderiv{\varGamma}{i}
                    = \be_{i - 2}
                    \quad
                    (i = 3, 4, 5)$.
                  Hence, the curve $\varGamma$ takes the form
                    $
                    \varGamma \paren{t}
                    = \paren{t^3, t^4, t^5, 0, \dots, 0}
                      + \smallo{t^5}$.
                  This curve $\varGamma$ is \Aqui-equivalent to $\Cusp{3, 4, 5}$ by
              \cref{thm:A.equivalence.suff.cond.1} and \eqref{eq:NR.2.3,4.NR.2.3,4,5}.
              
              (ii) If $\gamma$ satisfies
                  $\sderiv{\gamma}{4}
                    \notin \Span{\sderiv{\gamma}{3}}$
                  and
                  $\sderiv{\gamma}{5}
                    \in
                      \Span{%
                        \sderiv{\gamma}{3},
                        \sderiv{\gamma}{4}
                      }$,
                  then the curve
                  $\varGamma \paren{t}
                    = T_2\,\gamma \paren{t}$ 
                  satisfies
                    $\sderiv{\varGamma}{1}
                    = \sderiv{\varGamma}{2}
                    = \bzero$, 
                    $\sderiv{\varGamma}{3}
                    = \be_1$, 
                    $\sderiv{\varGamma}{4}
                    = \be_2$, and 
                    $\sderiv{\varGamma}{5}
                    \in \Span{\be_1, \be_2}$,
                    where $T_2 \in \GL{N,\R}$ is in \cref{lem:lin_change_to_standard_basis}.
                  Hence, the curve $\varGamma$ takes the form
                  \begin{equation}
                    \label{eq:34.standard_form}%
                    \varGamma \paren{t}
                    = \paren{t^3 + a_2 \, t^5, t^4 + b_1 \, t^5, 0, \dots, 0}
                      + \smallo{t^5}
                  \end{equation}
                  with constants $a_2, b_1 \in \R$.
                  Through the parameter change
                  $\phi (s)
                    = s - (b_1s^2) / 4
                        + ((- 16 a_2 - 3 b_1^2) \, s^3) / 48 $
                  and the coordinate transformation 
                    $\varPhi (x_1, x_2, \dots, x_N)
                    = (
                          x_1 + 3 b_1 x_2 / 4,
                          x_2, \dots, x_N
                        )$,
                  we get
                    $\paren{\varPhi \circ \varGamma \circ \phi} \paren{s}
                    = \paren{s^3, s^4, 0, \dots, 0}
                      + \smallo{s^5}$.
                  This curve is \Aqui-equivalent to $\Cusp{3, 4}$ by
                  \cref{thm:A.equivalence.suff.cond.1} and \eqref{eq:NR.2.3,4.NR.2.3,4,5}.

            \textbf{Step 1 of (2)}.
            Next, let us prove the ``if'' part of (2). 
            We observe
            \begin{equation} \label{eq:NR.2.3,5.NR.2.3,5,7}
                \cNR{2}{3, 5} = \cNR{2}{3, 5, 7} = \cparen{0, 1, 2, 3, 4, 5, 7}.
            \end{equation}

            \textbf{Step 2 of (2)}.
            If $\gamma$ satisfies
              $\sderiv{\gamma}{4}
                \in \Span{\sderiv{\gamma}{3}}
                \notni \sderiv{\gamma}{5}$,
              then there exists $\lambda \in \R$
              such that
              $\sderiv{\gamma}{4} = \lambda \, \sderiv{\gamma}{3}$.
              Thus, the curve
              $\varGamma \paren{t}
                = T_3 \,
                    \gamma \paren{t}$
              satisfies
                $\sderiv{\varGamma}{1}
                = \sderiv{\varGamma}{2}
                = \bzero$, 
                $\sderiv{\varGamma}{3}
                = \be_1$, 
                $\sderiv{\varGamma}{4}
                = \lambda \, \be_1$, and 
                $\sderiv{\varGamma}{5}
                = \be_2$, 
                where $T_3 \in \GL{N,\R}$ is in \cref{lem:lin_change_to_standard_basis}.
              Hence, the curve $\varGamma$ takes the form
              \begin{equation}
                \label{eq:35.standard_form}%
                \varGamma \paren{t}
                = \left(%
                      t^3 + \lambda \, t^4 + a_{3} \, t^{6} + a_{4} \, t^{7},
                      t^5 + b_{1} \, t^{6} + b_{2} \, t^{7},
                      0, \dots, 0
                    \right)
                  + t^6 \, \bv_6
                  + t^7 \, \bv_7
                  + \smallo{t^7}
              \end{equation}
              with constants $a_3, a_4, b_1, b_2 \in \R$
              and $\bv_6, \bv_7
                \in \Span{\be_3, \dots, \be_N}$.
                
              \textbf{Step 3 of (2)}.
              In this step, we show that, if
                   (i) $\bv_7 - 2 \lambda \, \bv_6 \neq \bzero$ or
                   (ii) $\bv_7 - 2 \lambda \, \bv_6 = \bzero$,
                then $\varGamma$ is \Aqui-equivalent to 
                   (i) $\Cusp{3, 5, 7}$ or
                   (ii) $\Cusp{3, 5}$
                respectively.

                Suppose that $\varGamma$ is expressed in the form of \cref{eq:35.standard_form}.
              By an argument similar to that in the proof of \cref{thm:criteria.2.3.and.2.5.and.2.7}, there exist  $c_{i} \in \R$ ($i = 1, 2, 4$) and $p_1 \in \R$
              such that the parameter change $\phi \paren{s} = s + c_1 \, s^2 + c_2\, s^3 + c_4\, s^5$ and the coordinate transformation $\varPhi (x_1, x_2, \dots, x_N) = (x_1 + p_1 \, x_2, x_2, \dots, x_N)$ yield
              \begin{equation}
                \label{eq:357_and_35.transformed}%
                \paren{\varPhi \circ \varGamma \circ \phi} \paren{s}
                = \paren{s^3 + \tilde{a}_3 \, s^6, s^5 + \tilde{b}_1 \, s^6, 0, \dots, 0}
                  + s^6 \, \tilde{\bv}_6
                  + s^7 \, \tilde{\bv}_7
                  + \smallo{s^7}
              \end{equation}
              with constants $\tilde{a}_3, \tilde{b}_1 \in \R$ and $\tilde{\bv}_6, \tilde{\bv}_7 \in \Span{\be_3, \dots, \be_N}$.

                (i) If $\tilde{\bv}_7 = \bv_7 - 2 \lambda \, \bv_6 \neq \bzero$, then $(\bm{e}_1, \bm{e}_2, \tilde{\bm{v}}_7)$ is linearly independent.
                Thus, the matrix $T_4 \in \GL{N, \R}$ obtained from this tuple by \cref{lem:lin_change_to_standard_basis} gives
                  \begin{equation}
                    T_4 \, \paren{\varPhi \circ \varGamma \circ \phi} \paren{s}
                    = \paren{%
                          s^3 + \tilde{a}_3 \, s^6,
                          s^5 + \tilde{b}_1 \, s^6,
                          s^7,
                          0, \dots, 0
                        }
                      + T_4 \, \tilde{\bm{v}}_6 + \smallo{s^7},
                  \end{equation}
                  is \Aqui-equivalent to $\Cusp{3, 5, 7}$ by \cref{thm:A.equivalence.suff.cond.1} and \eqref{eq:NR.2.3,5.NR.2.3,5,7}. 
                  
                (ii) If $\tilde{\bv}_7 = \bv_7 - 2 \lambda \, \bv_6 = \bzero$, then the curve in \cref{eq:357_and_35.transformed} is \Aqui-equivalent to $\Cusp{3, 5}$ by \cref{thm:A.equivalence.suff.cond.1} and 
                  \eqref{eq:NR.2.3,5.NR.2.3,5,7}. 

              We note that 
              $\sderiv{\varGamma}{7} - 2 \lambda \, \sderiv{\varGamma}{6}
               = (a_3 - 2 \lambda \, a_4) \, \be_1 + 
                 (b_1 - 2 \lambda \, b_2) \, \be_2 +
                 \bv_6 - 2\lambda \, \be_7$.
            
                (i) The condition
                  $\sderiv{\varGamma}{7} - 2 \, \lambda \, \sderiv{\varGamma}{6}
                    \notin \Span{\sderiv{\varGamma}{3}, \sderiv{\varGamma}{5}}$
                  means $\bv_7 - 2 \lambda \, \bv_6 \neq \bzero$.
                  Hence, by \textbf{Step 2 of (2)},
                  the curve $\varGamma$ is \Aqui-equivalent to $\Cusp{3, 5, 7}$.
                  
                (ii) The condition
                  $\sderiv{\varGamma}{7} - 2 \, \lambda \, \sderiv{\varGamma}{6}
                    \in \Span{\sderiv{\varGamma}{3}, \sderiv{\varGamma}{5}}$
                  means $\bv_7 - 2 \lambda \, \bv_6 = \bzero$.
                  Hence, by \textbf{Step 2 of (2)},
                  the curve $\varGamma$ is \Aqui-equivalent to $\Cusp{3, 5}$.
                  This completes the proof of  the ``if'' part of (2).

            \textbf{Step 4 of (1) and (2)}.
            Finally, we prove the ``only if'' part of (1) and (2). Each of the standard singularities $\Cusp{3, 4, 5}$, $\Cusp{3, 4}$, $\Cusp{3, 5, 7}$, and $\Cusp{3, 5}$ satisfies its respective conditions.
              Hence, we only need to verify that the stated conditions are invariant under \A-equivalence.
              Most of the conditions stated in the claim are immediately seen to be invariant under \A-equivalence from \cref{prop:multi_and_lin_dep.invariant}.
              In particular, the criteria \ref{item:thm.criteria:3.4.5.and.3.4} is clear.
              Thus, under \cref{eq:criteria.3.5.7.and.3.5:conditions.1},
              it remains to verify the invariance of the condition
              \begin{equation}
                \label{eq:criteria.3.5.7.and.3.5:condition}%
                \sderiv{\gamma}{7}
                  - 2 \, \lambda \, \sderiv{\gamma}{6}
                \notin \Span{\sderiv{\gamma}{3}, \sderiv{\gamma}{5}}.
              \end{equation}

              Suppose that $\gamma$ satisfies
              \cref{eq:criteria.3.5.7.and.3.5:conditions.1}.

              Let us separately show the invariance under
              (a) parameter changes and (b) coordinate transformations.

              (a)
                  We take a parameter change $\phi$
                  and consider the curve $\tilde{\gamma} \coloneqq \gamma \circ \phi$.
                  We can compute the Taylor coefficients as follows:
                  \begin{align}
                    \sderiv{\tilde{\gamma}}{3}
                    & = \paren{\sderiv{\phi}{1}}^3 \, \sderiv{\gamma}{3}, \qquad
                    \sderiv{\tilde{\gamma}}{4}
                     = \Bigl( \lambda \paren{\sderiv{\phi}{1}}^4+3 \sderiv{\phi}{2} \paren{\sderiv{\phi}{1}}^2 \Bigr) \, \sderiv{\gamma}{3}, \\
                    \sderiv{\tilde{\gamma}}{5}
                    & = \paren{\sderiv{\phi}{1}}^5 \, \sderiv{\gamma}{5}
                      + \sderiv{\phi}{1} \left(4 \lambda \sderiv{\phi}{2} \paren{\sderiv{\phi}{1}}^2+3 \sderiv{\phi}{3} \sderiv{\phi}{1}+3 \paren{\sderiv{\phi}{2}}^2\right) \, \sderiv{\gamma}{3}, \\
                    \sderiv{\tilde{\gamma}}{6}
                    & = \paren{\sderiv{\phi}{1}}^6 \, \sderiv{\gamma}{5}
                      + 5 \paren{\sderiv{\phi}{1}}^4 \sderiv{\phi}{2} \, \sderiv{\gamma}{5} \\
                    & \qquad
                      + \biggl(
                          4 \lambda \sderiv{\phi}{3} \paren{\sderiv{\phi}{1}}^3+3 \paren{\sderiv{\phi}{1}}^2 \left(2 \lambda \paren{\sderiv{\phi}{2}}^2+\sderiv{\phi}{4}\right) \\
                          & \hspace{15em}
                          +6 \sderiv{\phi}{2} \sderiv{\phi}{3} \sderiv{\phi}{1}+\paren{\sderiv{\phi}{2}}^3
                      \biggr)
                      \, \sderiv{\gamma}{3},
                  \end{align}
                  \begin{align}
                    \sderiv{\tilde{\gamma}}{7}
                    & = \paren{\sderiv{\phi}{1}}^7 \, \sderiv{\gamma}{7}
                      + 6 \paren{\sderiv{\phi}{1}}^5 \sderiv{\phi}{2} \, \sderiv{\gamma}{6}
                      + 5 \paren{\sderiv{\phi}{1}}^3 \left(2 \paren{\sderiv{\phi}{2}}^2+\sderiv{\phi}{1} \sderiv{\phi}{3}\right) \, \sderiv{\gamma}{5} \\
                    & \qquad 
                      + \biggl( 
                          4 \lambda \sderiv{\phi}{4} \paren{\sderiv{\phi}{1}}^3+3 \paren{\sderiv{\phi}{1}}^2 
                          \left(4 \lambda \sderiv{\phi}{2} \sderiv{\phi}{3}+\sderiv{\phi}{5}\right) \\
                                & \qquad \qquad 
                                +\sderiv{\phi}{1} \left(4 \lambda \paren{\sderiv{\phi}{2}}^3+6 \sderiv{\phi}{4} \sderiv{\phi}{2}
                                +3 \paren{\sderiv{\phi}{3}}^2\right)+3 \paren{\sderiv{\phi}{2}}^2 \sderiv{\phi}{3}
                        \biggr)
                         \, \sderiv{\gamma}{3}.
                  \end{align}
                  Since $\sderiv{\phi}{1} \neq 0$,
                  we have
                  $\Span{\sderiv{\tilde{\gamma}}{3}, \sderiv{\tilde{\gamma}}{5}}
                    = \Span{\sderiv{\gamma}{3}, \sderiv{\gamma}{5}}$.
                  Also, since
                  $\sderiv{\gamma}{4} = \lambda \, \sderiv{\gamma}{3}$,
                  $
                    \tilde{\lambda}
                    \coloneqq \lambda \sderiv{\phi}{1} + \sfrac{3 \sderiv{\phi}{2}}{\sderiv{\phi}{1}}
                  $
                  satisfies
                  $\sderiv{\tilde{\gamma}}{4}
                    = \tilde{\lambda} \, \sderiv{\tilde{\gamma}}{3}$.
                  Moreover, we obtain
                  \begin{equation}
                    \sderiv{\tilde{\gamma}}{7}
                      - 2 \, \tilde{\lambda} \, \sderiv{\tilde{\gamma}}{6}
                    \in \paren{\sderiv{\phi}{1}}^7 \,
                        \paren{%
                          \sderiv{\gamma}{7}
                          - 2 \, \lambda \, \sderiv{\gamma}{6}
                        }
                      + \Span{\sderiv{\gamma}{3}, \sderiv{\gamma}{5}},
                  \end{equation}
                  and hence,
                  condition \cref{eq:criteria.3.5.7.and.3.5:condition}
                  is invariant under parameter changes.
                
                (b) We take a coordinate transformation $\varPhi$
                  and consider the curve $\hat{\gamma} \coloneqq \varPhi \circ \gamma$.
                  \cref{cor:Bruno_for_curves} \ref{item:cor_Bruno_curve.coord},
                  and \cref{rem:coord_transf.multi_m}
                  with $m = 3$, give
                  $\sderiv{\hat{\gamma}}{3}
                    = J_{\varPhi} \, \sderiv{\gamma}{3}$,
                  $\sderiv{\hat{\gamma}}{4}
                    = J_{\varPhi} \, \sderiv{\gamma}{4}
                    = \lambda \, J_{\varPhi} \, \sderiv{\gamma}{3}
                    = \lambda \, \sderiv{\hat{\gamma}}{3}$,
                  and
                  $\sderiv{\hat{\gamma}}{5}
                    = J_{\varPhi} \, \sderiv{\gamma}{5}$.
                  Thus,
                  \begin{equation}
                    \label{eq:pf.cri.3.5.7.and.3.5:coord:span.3.5.change}%
                    \Span{\sderiv{\hat{\gamma}}{3}, \sderiv{\hat{\gamma}}{5}}
                    = J_{\varPhi} \, \Span{\sderiv{\gamma}{3}, \sderiv{\gamma}{5}}.
                  \end{equation}
                  Moreover, since
                  $\sderiv{x}{4}_j = \lambda \, \sderiv{x}{3}_j$
                  for $j = 1, \dots, N$,
                  we get
                  \begin{equation}
                    \sderiv{\hat{\gamma}}{6}
                     = J_{\varPhi} \, \sderiv{\gamma}{6}
                        + \frac{1}{2}
                          \sum_{i, j = 1}^{N}
                          \pfrac{^2 \varPhi}{x_i \, \partial x_j} \,
                          \sderiv{x}{3}_i \,
                          \sderiv{x}{3}_j, \quad
                    \sderiv{\hat{\gamma}}{7}
                     = J_{\varPhi} \, \sderiv{\gamma}{7}
                        + \lambda
                          \sum_{i, j = 1}^{N}
                            \pfrac{^2 \varPhi}{x_i \, \partial x_j} \,
                            \sderiv{x}{3}_i \,
                            \sderiv{x}{3}_j.
                  \end{equation}
                  Therefore,
                  $
                    \sderiv{\hat{\gamma}}{7}
                        - 2 \, \lambda \, \sderiv{\hat{\gamma}}{6}
                     = J_{\varPhi}
                        \paren{%
                          \sderiv{\gamma}{7}
                          - 2 \, \lambda \, \sderiv{\gamma}{6}
                        }
                  $
                  gives
                  condition \cref{eq:criteria.3.5.7.and.3.5:condition}
                  is invariant under coordinate transformations
                  by \cref{eq:pf.cri.3.5.7.and.3.5:coord:span.3.5.change},
                  completes the proof of \cref{conj:criteria.3.4.5.and.3.4.and.3.5.7.and.3.5}.
    \end{proof}

    \begin{thm}
        \label{conj:criteria.3.7.etc}%
        Let $\sigma \in \cparen{0, 1}$.
        A curve $\gamma \colon \paren{\R, 0} \to \paren{\R^N, \bzero}$ in $\R^N$ is \A-equivalent to
        \begin{enumerate}[label={(\roman*)}]
            \item $\Cusp{3, 7, 8}  (t)
                = \paren{t^3, t^7, t^8, 0, \dots, 0}$,
            \item $\Cusp{3, 7_{8 \sigma}, 11}  (t)
                = \paren{t^3, t^7 + \sigma \, t^8, t^{11}, 0, \dots, 0}$,
            \item $\Cusp{3, 7_{8 \sigma}} (t)
                = \paren{t^3, t^7 + \sigma \, t^8, 0, \dots, 0}$
        \end{enumerate}
        if and only if there exist $\lambda, \mu \in \R$ such that
        \begin{equation}
            \label{eq:thm.3.7.etc:conditions.no.1-1}%
            \sderiv{\gamma}{1} = \sderiv{\gamma}{2} = \bzero \neq \sderiv{\gamma}{3},
            \quad
            \sderiv{\gamma}{4} = \lambda \, \sderiv{\gamma}{3},
            \quad
            \sderiv{\gamma}{5} = \mu \, \sderiv{\gamma}{3},
            \quad
            \bm{V}_7 \notin \Span{\sderiv{\gamma}{3}}
        \end{equation}
        and respectively,
        \begin{enumerate}[label={(\roman*)}]
            \item \label{item:thm.3.7.etc:type.1}%
                $\bm{V}_8 \notin \Span{\sderiv{\gamma}{3}, \bm{V}_7}$,
            \item\label{item:thm.3.7.etc:type.2}%
                there exist $A_8, B_8 \in \R$
                such that $\bm{V}_8 = A_8 \, \sderiv{\gamma}{3} + B_8 \, \bm{V}_7$,
                $\bm{V}_{11} \notin \Span{\sderiv{\gamma}{3}, \bm{V}_7}$,
                and
                \begin{equation}
                    \label{eq:thm.3.7.etc:conditions.no.1-2}
                    \begin{cases}
                        B_8 - \sfrac{7 \lambda}{3} = 0
                        & (\sigma = 0), \\
                        B_8 - \sfrac{7 \lambda}{3} \neq 0
                        & (\sigma = 1),
                    \end{cases}
                \end{equation}
            \item \label{item:thm.3.7.etc:type.3}%
                there exist $A_8, B_8 \in \R$
                such that $\bm{V}_8 = A_8 \, \sderiv{\gamma}{3} + B_8 \, \bm{V}_7$,
                $\bm{V}_{11} \in \Span{\sderiv{\gamma}{3}, \bm{V}_7}$,
                and \cref{eq:thm.3.7.etc:conditions.no.1-2},
        \end{enumerate}
        where we set
            $\bm{V}_7 \coloneqq \sderiv{\gamma}{7} - 2 \lambda \, \sderiv{\gamma}{6}$,
            $\bm{V}_8 \coloneqq \sderiv{\gamma}{8} - \left(\lambda^2 + 2 \mu \right) \, \sderiv{\gamma}{6}$,
            and
            \begin{align}
                \label{eq:thm.3.7.etc:conditions.no.3-1}
                \bm{V}_{11} \coloneqq
                    \sderiv{\gamma}{11}
                        & - \left( B_8 + \lambda \right) \, \sderiv{\gamma}{10}
                        + 3 \left( \lambda \, B_8 - \mu \right) \, \sderiv{\gamma}{9}
                        - \frac{4}{27} \left(
                            3 \left( 13 \lambda^2 - 6 \mu \right) B_8
                            - 35 \lambda^3
                        \right) \, \bm{V}_8 \\
                    & - \left(
                            2 A_8 + \mu \left( 6 \lambda^2 - \mu \right) B_8 - 7 \lambda \mu^2
                        \right) \, \sderiv{\gamma}{6}
            \end{align}
            in \cref{item:thm.3.7.etc:type.2,item:thm.3.7.etc:type.3}.
    \end{thm}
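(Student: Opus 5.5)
We follow the four-step procedure of \cref{sec:Introduction}, exactly as in the proofs of \cref{thm:criteria.2.3.and.2.5.and.2.7,conj:criteria.3.4.5.and.3.4.and.3.5.7.and.3.5}.

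\textbf{Step 1.} We compute the relevant non-representable sets. Since $\cR{1}{3,7} = \{3,6,7,9,10,12,13,14,\dots\}$, we get
\begin{equation}
\cNR{2}{3,7} = \{0,1,2,3,4,5,7,8,11\}.
\end{equation}
Adjoining $8$ or $11$ to the generators does not change this set, because $8$ and $11$ are never sums of two elements of $\cR{1}{A}$ with $3 \in A$ and $\min(A\setminus\{3\})\ge 7$. Hence
\begin{equation}
\cNR{2}{3,7,8} = \cNR{2}{3,7,11} = \cNR{2}{3,7} = \{0,1,2,3,4,5,7,8,11\}.
\end{equation}
The same set applies to $\Cusp{3,7_{+8}}$ via \cref{lem:suff.cond.1:cNR.containment}, writing it as $(t^3, t^7 + o(t^7), \dots)$ and using $\cNR{2}{\varGamma}\subset\cNR{2}{A}$. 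So by \cref{thm:A.equivalence.suff.cond.1} only the Taylor coefficients of orders $3,7,8,11$ modulo lower terms must be normalized. The coefficients of orders $4$ and $5$ are killed by the parameter change.

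\textbf{Step 2.} Apply $T_1$ from \cref{lem:lin_change_to_standard_basis} to get $\sderiv{\varGamma}{3}=\be_1$, $\sderiv{\varGamma}{4}=\lambda\be_1$, $\sderiv{\varGamma}{5}=\mu\be_1$. Then choose a parameter change $\phi(s)=s+\sum c_i s^{i+1}$ so that the first component becomes $s^3$ up to the orders that matter; solving for $c_1,c_2$ kills the $s^4,s^5$ terms. Add coordinate changes of the form $x_1\mapsto x_1+p\,x_1^2+\dots$ and $x_j\mapsto x_j+q_j x_1^2$ to remove the $s^6$ and $s^9,s^{10}$ contributions in the transverse directions, since $6,9,10\in\cR{2}{3,7}$.

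The transverse part at order $7$ is then $\bV_7$ modulo $\sderiv{\gamma}{3}$. After normalizing it to $\be_2$, the order-$8$ transverse part is $\bV_8$ modulo $\Span{\sderiv{\gamma}{3}}$. If $\bV_8\notin\Span{\sderiv{\gamma}{3},\bV_7}$, we send it to $\be_3$ and obtain $(s^3,s^7,s^8)+\cdots$, which is case (i).

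Otherwise the $\be_2$-component is $s^7+\beta s^8+\cdots$. A rescaling $s\mapsto as$ combined with the linear change $x_2\mapsto a^{-7}x_2$ turns $\beta$ into $\beta a$ rather than $\beta/a$. So $\beta$ can be normalized to $0$ or $1$, according to whether $B_8-7\lambda/3$ vanishes. The shift $7\lambda/3$ arises because the parameter change with $c_1=-\lambda/3$ alters the order-$8$ coefficient in the $\be_2$-direction. Finally, the remaining order-$11$ transverse component modulo $\Span{\be_1,\be_2}$ is $\bV_{11}$, which decides between (ii) and (iii).

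\textbf{Step 3.} We rewrite the Step 2 conditions in terms of Taylor coefficients of $\gamma$, exactly as in Step 3 of \cref{thm:criteria.2.3.and.2.5.and.2.7}. This amounts to expressing $\tilde{\bv}_7,\tilde{\bv}_8,\tilde{\bv}_{11}$ after the parameter change as the given combinations $\bV_7,\bV_8,\bV_{11}$, which is a direct Fa\`a di Bruno expansion via \cref{cor:Bruno_for_curves}.

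\textbf{Step 4.} For necessity, we check that the model curves satisfy the conditions and that each condition is invariant. Under a coordinate transformation, \cref{cor:Bruno_for_curves}\ref{item:cor_Bruno_curve.coord} with \cref{rem:coord_transf.multi_m} ($m=3$) gives $\hat{\bV}_j = J_\varPhi \bV_j$ modulo $J_\varPhi\Span{\sderiv{\gamma}{3},\bV_7}$ for $j=7,8,11$. The second- and third-order terms of $\varPhi$ contribute only at orders $6,9,10,\dots$ along $\sderiv{\gamma}{3}$-products, which the correction terms are designed to cancel.

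Under a parameter change we set $\tilde\lambda=\lambda\sderiv{\phi}{1}+3\sderiv{\phi}{2}/\sderiv{\phi}{1}$ and find suitable $\tilde\mu,\tilde A_8,\tilde B_8$. We then show $\tilde{\bV}_7 \in (\sderiv{\phi}{1})^7\bV_7+\Span{\sderiv{\gamma}{3}}$, $\tilde{\bV}_8$ is similar modulo $\Span{\sderiv{\gamma}{3},\bV_7}$, and the quantity $\tilde B_8-7\tilde\lambda/3=\sderiv{\phi}{1}(B_8-7\lambda/3)$ is invariant up to a nonzero factor. The last check is the $\bV_{11}$ invariance.

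The main obstacle is this order-$11$ bookkeeping in both Step 3 and Step 4. The coefficients of $\bV_{11}$ (including $\tfrac{4}{27}(\cdots)$) must come out exactly, so that $\tilde{\bV}_{11}\in(\sderiv{\phi}{1})^{11}\bV_{11}+\Span{\sderiv{\gamma}{3},\bV_7}$. I would first reduce to $\phi$ with $\sderiv{\phi}{1}=1$ and handle the scalings separately. I would then verify the identity one generator at a time: $\phi(s)=s+c s^k$ for $k=2,\dots,9$, since such substitutions compose. This lets us track the terms symbolically rather than all at once.
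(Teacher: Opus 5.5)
\textbf{Gap in Step~1.} Your Step~1 makes a false claim, and the whole sufficiency argument for $\sigma = 1$ depends on it. \cref{lem:suff.cond.1:cNR.containment} allows $o(t^{a_k})$ terms only in the coordinates $k+1, \dots, N$; the first $k$ coordinates must be exactly $t^{a_1}, \dots, t^{a_k}$. $\Cusp{3, 7_{+8}}$ is not of this form, and in fact $10 \in \cNR{2}{\Cusp{3, 7_{+8}}}$. To see this, take $J_f(\bzero) = O$. The only monomials of weighted order at most $11$ in $f(X_1, X_2, 0, \dots, 0)$ are $X_1^2$, $X_1^3$, and $X_1 X_2$. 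Since $X_1 X_2$ restricts to $t^{10} + t^{11}$, $f \circ \Cusp{3, 7_{+8}}$ always has equal $t^{10}$- and $t^{11}$-coefficients. So $\cNR{2}{\Cusp{3, 7_{+8}}} = \cparen{1, 2, 3, 4, 5, 7, 8, 10, 11}$, and the same holds for $\Cusp{3, 7_{+8}, 11}$. Hence \cref{thm:A.equivalence.suff.cond.1} does not apply to these models.

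Your claim also leads to a false conclusion. Take $\varGamma = \Cusp{3, 7_{+8}}$ and $M(t) = t^{10} \be_3$. Your claim would make $(t^3, t^7 + t^8, t^{10}, 0, \dots, 0)$ \cL-equivalent to $\Cusp{3, 7_{+8}}$. But $x_3 \mapsto x_3 - x_1 x_2$ turns it into $(t^3, t^7 + t^8, -t^{11}, 0, \dots, 0)$, which is a $(3, 7_{+8}, 11)$-cusp. Consistently, here $\lambda = \mu = A_8 = 0$, $B_8 = 1$, and $\bm{V}_{11} = -\be_3$.

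\textbf{What the fix requires.} The paper exhibits $f_3 = X_1 \in \Fcns{1}{3}{\Cusp{3, 7_{8\sigma}}}$ and explicit $f_{13} \in \Fcns{2}{13}{\Cusp{3, 7_{8\sigma}}}$, $f_{14} \in \Fcns{2}{14}{\Cusp{3, 7_{8\sigma}}}$; these are rational in $X_1, X_2$ with denominator $1 - \sigma X_1$. From these and \cref{prop:cR.and.cNR.of.curve} it gets $\cNR{2}{\Cusp{3, 7_{8\sigma}}}, \cNR{2}{\Cusp{3, 7_{8\sigma}, 11}} \subset \cparen{1, 2, 3, 4, 5, 7, 8, 10, 11}$. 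It then applies \cref{thm:general.property.A.equivalence} directly.

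Consequently, for $\sigma = 1$ your Step~2 must clear the order-$10$ coefficient in every direction, including $\be_1$ and $\be_2$, not just transversally. Your Step~1 says this is unnecessary, and your Step~2 never addresses it in the $\be_1, \be_2$ components.

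This bookkeeping is not cosmetic. Clearing the transverse order-$10$ term with an $x_1 x_2$-correction shifts the order-$11$ term by $\hat{b}_8 = B_8 - 7\lambda/3$ times that term. The parameter change also contributes $10 c_1 = -10\lambda/3$. Together these give exactly the coefficient $-(B_8 + \lambda)$ of $\sderiv{\gamma}{10}$ in $\bm{V}_{11}$.

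\textbf{Two smaller points.}
\begin{itemize}
\item $\cNR{2}{3, 7, 8} = \cparen{0, 1, 2, 3, 4, 5, 7, 8}$, because $11 = 3 + 8$. Your overestimate is harmless, but the reason you give is wrong once $8$ is a generator.
\item In Step~4 you need the exact identities $\hat{\bm{V}}_7 = J_{\varPhi} \bm{V}_7$ and $\hat{\bm{V}}_8 = J_{\varPhi} \bm{V}_8$, not merely equality modulo $J_{\varPhi} \Span{\sderiv{\gamma}{3}, \bm{V}_7}$. Otherwise $A_8$ and $B_8$ could change, and both \cref{eq:thm.3.7.etc:conditions.no.1-2} and $\bm{V}_{11}$ depend on them.
\end{itemize}
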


    \begin{proof}
        We prove the claim following the procedure we outlined in \cref{sec:Introduction}.
        First, we prove the ``if'' part.
        
        \textbf{Step 1}.
          For $\sigma \in \{ 0, 1 \}$,
          we note that 
              $3 \in \cR{1}{\Cusp{3, 7_{8 \sigma}}}$ 
              and
              $13, 14 \in \cR{2}{\Cusp{3, 7_{8 \sigma}}}$,
          since 
          $f_3 \in \Fcns{1}{3}{\Cusp{3, 7_{8 \sigma}}}$
          and 
          $f_{i} \in \Fcns{2}{i}{\Cusp{3, 7_{8 \sigma}}}$
          ($i = 13, 14$),
          where 
          \begin{align}
            f_3 \paren{X_1, X_2, \dots, X_N} & = X_1, \qquad
            f_{13} \paren{X_1, X_2, \dots, X_N}
             = 
            \frac{X_1^2 X_2 - \sigma X_2^2 + 2 \sigma X_1^5}{1 - \sigma X_1}, \\
            f_{14} \paren{X_1, X_2, \dots, X_N}
            & = \frac{X_2^2 - 2 \sigma X_1^5 - \sigma X_1^3 X_2}{1 - \sigma X_1}.
          \end{align}
          Hence, by \cref{prop:cR.and.cNR.of.curve}, we get
          \begin{gather}
            \label{eq:NR.3.7.8}
            \cNR{2}{\Cusp{3, 7, 8}}
            \subset \{ 1, 2, 3, 4, 5, 7, 8 \}, \\
            \label{eq:NR.3.7.etc}
            \cNR{2}{\Cusp{3, 7_{8 \sigma}}}, \,
            \cNR{2}{\Cusp{3, 7_{8 \sigma}, 11}}
             \subset \{ 1, 2, 3, 4, 5, 7, 8, 10, 11 \}.
          \end{gather}

        \textbf{Steps 2 and 3 of \cref{item:thm.3.7.etc:type.1}}.
            If there exist $\lambda, \mu \in \R$ such that \cref{eq:thm.3.7.etc:conditions.no.1-1} and if $\bm{V}_8 \notin \Span{\sderiv{\gamma}{3}, \bm{V}_7}$,
              then the curve $\varGamma \paren{t} = T_1 \, \gamma \paren{t}$ satisfies
            $\sderiv{\varGamma}{1}
                = \sderiv{\varGamma}{2}
                = \bm{0}$, 
                $\sderiv{\varGamma}{3} = \bm{e}_1$,
                $\sderiv{\varGamma}{4} = \lambda \, \bm{e}_1$,
                $\sderiv{\varGamma}{5} = \mu \, \bm{e}_1$,
                $T_1 \, \bm{V}_7 = \bm{e}_2$,
                and $T_1 \, \bm{V}_8 = \bm{e}_3$,
                where $T_1 \in \GL{N, \R}$ 
                is obtained in \cref{lem:lin_change_to_standard_basis}.
              Hence, the curve $\varGamma$ takes the form
              \begin{equation}
                \label{eq:3.7.etc.standard_form}%
                \varGamma \paren{t}
                = \left(%
                      t^3 + \lambda \, t^4 + \mu \, t^5 + \sum_{i = 6}^{11} a_{i} \, t^{i},
                      \sum_{i = 6}^{11} b_{i} \, t^{i},
                      \sum_{i = 6}^{11} c_{i} \, t^{i},
                      0, \dots, 0
                    \right)
                  + \sum_{i = 6}^{11} t^{i} \, \bm{v}_{i}
                  + \smallo{t^{11}}
              \end{equation}
              with constants $a_{i}, b_{i}, c_{i} \in \R$
              and $\bm{v}_{i} \in \Span{\be_4, \dots, \be_N}$
              ($i = 6, \dots, 11$).
              We note that
              \begin{gather}
                  a_7 - 2 \lambda \, a_6 = c_7 - 2 \lambda \, c_6 = a_8 - (\lambda^2 + 2 \mu) \, a_6 = b_8 - (\lambda^2 + 2 \mu) \, b_6 = 0, \\
                  b_7 - 2 \lambda \, b_6 = c_8 - (\lambda^2 + 2 \mu) \, c_6 = 1,
                  \quatext{and} \bm{v}_7 - 2 \lambda \, \bm{v}_6 = \bm{v}_8 - (\lambda^2 + 2 \mu) \, \bm{v}_6 = \bm{0},
              \end{gather}
              since
              $\bm{e}_2 = T_1 \, \bm{V}_7
                = \sderiv{\varGamma}{7} - 2 \, \lambda \, \sderiv{\varGamma}{6}
                = (a_7 - 2 \lambda \, a_6) \, \bm{e}_1
                + (b_7 - 2 \lambda \, b_6) \, \bm{e}_2
                + (c_7 - 2 \lambda \, c_6) \, \bm{e}_3
                + (\bm{v}_7 - 2 \lambda \, \bm{v}_6)$
              and
              $\bm{e}_3 = T_1 \, \bm{V}_8
                = \sderiv{\varGamma}{8} - (\lambda^2 + 2 \mu) \, \sderiv{\varGamma}{6}
                = (a_8 - (\lambda^2 + 2 \mu) \, a_6) \, \bm{e}_1
                + (b_8 - (\lambda^2 + 2 \mu) \, b_6) \, \bm{e}_2
                + (c_8 - (\lambda^2 + 2 \mu) \, c_6) \, \bm{e}_3
                + (\bm{v}_8 - (\lambda^2 + 2 \mu) \, \bm{v}_6)$.
              
              By an argument similar to that in the proof of \cref{thm:criteria.2.3.and.2.5.and.2.7}, there exist $q_1, q_2 \in \R$ and $p_1, p_2, p_3 \in \R$
              such that the parameter change $\phi \paren{s} = s + q_{1} \, s^{2} + q_2 \, s^3$ and the coordinate transformation $\varPhi (x_1, \dots, x_N) = (x_1 + p_1 \, x_2 + p_2 \, x_3, x_2 + p_3 \, x_3, x_3, \dots, x_N)$ yield
              \begin{equation}
                \label{eq:3.7.etc.transformed}%
                    \paren{\varPhi \circ \varGamma \circ \phi} \paren{s}
                    = \left( t^3, t^7, t^8, 0, \dots, 0 \right)
                      + \sum_{i = 6, 9, 10, 11} t^{i} \, \hat{\bm{v}}_{i}
                      + \smallo{t^{11}}
              \end{equation}
              with constants $\hat{\bm{v}}_{i} \in \R^N$ ($i = 6, 9, 10, 11$).
              This curve is \A-equivalent to $\Cusp{3, 7, 8}$ by \cref{thm:A.equivalence.suff.cond.1,eq:NR.3.7.8}.

        \textbf{Steps 2 and 3 of \cref{item:thm.3.7.etc:type.2,item:thm.3.7.etc:type.3}}.
            If there exist $\lambda, \mu, A_8, B_8 \in \R$ such that \cref{eq:thm.3.7.etc:conditions.no.1-1} and
            $\bm{V}_8 = A_8 \, \sderiv{\gamma}{3} + B_8 \, \bm{V}_7$,
              then the curve $\varGamma \paren{t} = T_2 \, \gamma \paren{t}$ satisfies
            $\sderiv{\varGamma}{1}
                = \sderiv{\varGamma}{2}
                = \bm{0}$, 
                $\sderiv{\varGamma}{3} = \bm{e}_1$,
                $\sderiv{\varGamma}{4} = \lambda \, \bm{e}_1$,
                $\sderiv{\varGamma}{5} = \mu \, \bm{e}_1$,
                $T_2 \, \bm{V}_7 = \bm{e}_2$,
                and $T_2 \, \bm{V}_8 = A_8 \, \bm{e}_1 + B_8 \, \bm{e}_2$,
                where $T_2 \in \GL{N, \R}$ 
                is obtained in \cref{lem:lin_change_to_standard_basis}.
              Hence, the curve $\varGamma$ takes the form
              \begin{equation}
                \label{eq:3.7.etc.standard_form}%
                \varGamma \paren{t}
                = \left(%
                      t^3 + \lambda \, t^4 + \mu \, t^5 + \sum_{i = 6}^{11} a_{i} \, t^{i},
                      \sum_{i = 6}^{11} b_{i} \, t^{i},
                      0, \dots, 0
                    \right)
                  + \sum_{i = 6}^{11} t^{i} \, \bm{v}_{i}
                  + \smallo{t^{11}}
              \end{equation}
              with constants $a_{i}, b_{i} \in \R$
              and $\bm{v}_{i} \in \Span{\be_3, \dots, \be_N}$
              ($i = 6, \dots, 11$).
              We note that
              \begin{equation}
                  a_7 - 2 \lambda \, a_6 = 0,
                  \quad b_7 - 2 \lambda \, b_6 = 1,
                  \quatext{and} \bm{v}_7 - 2 \lambda \, \bm{v}_6 = \bm{0},
              \end{equation}
              since $\bm{e}_2 = T_2 \, \bm{V}_7 = (a_7 - 2 \lambda \, a_6) \, \bm{e}_1 + (b_7 - 2 \lambda \, b_6) \, \bm{e}_2 + (\bm{v}_7 - 2 \lambda \, \bm{v}_6)$.
              Also, since $A_8 \, \bm{e}_1 + B_8 \, \bm{e}_2
                = T_2 \, \bm{V}_8
                = (a_8 - (\lambda^2 + 2 \mu) \, a_6) \, \bm{e}_1
                + (b_8 - (\lambda^2 + 2 \mu) \, b_6) \, \bm{e}_2
                + (\bm{v}_8 - (\lambda^2 + 2 \mu) \, \bm{v}_6)$,
              we get
              \begin{equation}
                  A_8 = a_8 - (\lambda^2 + 2 \mu) \, a_6,
                  \quad B_8 = b_8 - (\lambda^2 + 2 \mu) \, b_6,
                  \quatext{and} \bm{v}_8 - (\lambda^2 + 2 \mu) \, \bm{v}_6 = \bm{0}.
              \end{equation}
              
              By an argument similar to that in the proof of \cref{thm:criteria.2.3.and.2.5.and.2.7}, there exist $q_{i} \in \R$ ($i = 1, 2, 4, 5, 8$) and $p_1, \dots, p_4 \in \R$ and $\bm{w}_1 \in \Span{\be_3, \dots, \be_N}$
              such that the parameter change $\phi \paren{s} = s + \sum_{i = 1, 2, 4, 5, 8} q_{i} \, s^{i + 1}$ and the coordinate transformation $\varPhi (x_1, x_2, \dots, x_N) = (x_1 + p_1 \, x_2 + p_2 \, x_1 \, x_2, x_2 + p_3 \, x_1^2 + p_4 \, x_1 \, x_2, x_3, \dots, x_N) + x_1 \, x_2 \, \bm{w}_1$ yield
              \begin{align}
                \label{eq:3.7.etc.transformed}%
                    \paren{\varPhi \circ \varGamma \circ \phi} \paren{s}
                    & = \left(%
                          s^3 + \hat{a}_6 \, s^6 + \hat{a}_9 \, s^9,
                          s^7 + \hat{b}_8 \, s^8 + \hat{b}_9 \, s^9,
                          0, \dots, 0
                        \right) \\
                    & \qquad
                      + \sum_{i = 6, 8, 9, 11} s^{i} \, \hat{\bm{v}}_{i}
                      + \smallo{s^{11}}
              \end{align}
              with constants $\hat{a}_6, \hat{a}_9, \hat{b}_8, \hat{b}_9 \in \R$ and $\hat{\bm{v}}_{i} \in \Span{\be_3, \dots, \be_N}$ ($i = 6, 8, 9, 11$).
              Now, we get
              \begin{gather}
                  \hat{b}_8 = b_8 - (\lambda^2 + 2 \mu) \, b_6 - \frac{7}{3} \lambda
                  = B_8 - \frac{7}{3} \lambda,
                  \qquad \hat{\bm{v}}_8 = \bm{v}_8 - (\lambda^2 + 2 \mu) \, \bm{v}_6 = \bm{0}, \\
                  \hat{\bm{v}}_{11}
                  = \bm{v}_{11}
                        - \left( B_8 + \lambda \right) \, \bm{v}_{10}
                        + 3 \left( \lambda \, B_8 - \mu \right) \, \bm{v}_{9}
                        - \left(
                            2 A_8 + \mu \left( 6 \lambda^2 - \mu \right) B_8 - 7 \lambda \mu^2
                          \right) \, \bm{v}_{6}.
              \end{gather}
              Hence, since $T_2 \, \bm{V}_8 \in \Span{\bm{e}_1, \bm{e}_2}$ and $T_2 \, \bm{v}_i \in \hat{\bm{v}}_i + \Span{\bm{e}_1, \bm{e}_2}$ ($i = 6, 9, 10, 11$), we obtain
              \begin{align}
                  T_2 \, \bm{V}_{11}
                  & \in T_2 \,
                        \left(
                            \bm{V}_{11} + \frac{4}{27} \left(
                                3 \left( 13 \lambda^2 - 6 \mu \right) B_8
                                - 35 \lambda^3
                            \right) \, \bm{V}_8
                        \right)
                    + \Span{\bm{e}_1, \bm{e}_2} \\
                  & = \hat{\bm{v}}_{11} + \Span{\bm{e}_1, \bm{e}_2}.
              \end{align}
              We set
              \begin{gather}
                \varPsi \paren{x_1, x_2, \dots, x_N}
                \coloneqq
                  \begin{cases}
                    \paren{x_1, x_2, \dots, x_N}
                        & (\hat{b}_8 = 0), \\
                    \paren{\hat{b}_8^3 \, x_1, \hat{b}_8^7 \, x_2, \hat{b}_8^{11} \, x_3, x_4, \dots, x_N}
                        & (\hat{b}_8 \neq 0),
                  \end{cases}
                  \quad \text{and} \\
                  \psi \paren{u} 
                  \coloneqq
                  \begin{cases}
                    u & (\hat{b}_8 = 0), \\
                    {u} / {\hat{b}_8} & (\hat{b}_8 \neq 0).
                  \end{cases}
              \end{gather}
          
              (\rnum{2}) If $\bm{V}_{11} \notin \Span{\sderiv{\gamma}{3}, \bm{V}_7}$,
                  then $\hat{\bv}_{11} \neq \bzero$. 
                  Thus, the curve
                  $\hat{\varGamma} \paren{s} \coloneqq T_3 \, \paren{\varPhi \circ \varGamma \circ \phi} \paren{s}$
                  satisfies
                  $\sderiv{\hat{\varGamma}}{1}
                    = \sderiv{\hat{\varGamma}}{2}
                    = \sderiv{\hat{\varGamma}}{4}
                    = \sderiv{\hat{\varGamma}}{5}
                    = \sderiv{\hat{\varGamma}}{10}
                    = \bm{0}$,
                  $\sderiv{\hat{\varGamma}}{3} = \bm{e}_1$,
                  $\sderiv{\hat{\varGamma}}{7} = \bm{e}_2$,
                  and $\sderiv{\hat{\varGamma}}{11} = \bm{e}_3$,
                  where $T_3 \in \GL{N, \R}$ is as in \cref{lem:lin_change_to_standard_basis} from the tuple $(\bm{e}_1, \bm{e}_2, \hat{\bm{v}}_{11})$.
                  Hence, it takes the form
                  \begin{equation}
                      \hat{\varGamma} \paren{s}
                      = \paren{s^3, s^7 + \hat{b}_8 \, s^8, s^{11}, 0, \dots, 0}
                          + s^{6} \, \hat{\bm{w}}_{6}
                          + s^{9} \, \hat{\bm{w}}_{9}
                          + \smallo{s^{11}}
                  \end{equation}
                  with $\hat{\bm{w}}_{6}, \hat{\bm{w}}_{9} \in \R^N$.
                  Hence, the curve $\varPsi \circ \hat{\varGamma} \circ \psi$ takes the form
                  \begin{equation}
                      \paren{\varPsi \circ \hat{\varGamma} \circ \psi} \paren{u}
                      = \paren{u^3, u^7 + \sigma \, u^8, u^{11}, 0, \dots, 0}
                          + u^{6} \, \tilde{\bm{w}}_{6}
                          + u^{9} \, \tilde{\bm{w}}_{9}
                          + \smallo{u^{11}}
                  \end{equation}
                  with constants $\tilde{\bm{w}}_{6}, \tilde{\bm{w}}_{9} \in \R^N$ and
                  \begin{equation}
                      \label{eq:3.7.etc:pf.sigma}
                      \sigma =
                      \begin{cases}
                          0 & (\hat{b}_8 = 0), \\
                          1 & (\hat{b}_8 \neq 0).
                      \end{cases}
                  \end{equation}
                  This curve is \A-equivalent to $\Cusp{3, 7_{8 \sigma}, 11}$ by \cref{thm:general.property.A.equivalence} and \eqref{eq:NR.3.7.etc}.
              
              (\rnum{3}) If $\bm{V}_{11} \in \Span{\sderiv{\gamma}{3}, \bm{V}_7}$,
                  then $\hat{\bv}_{11} = \bzero$. 
                  Thus, the curve
                  $\hat{\varGamma} \paren{s} \coloneqq T_3 \, \paren{\varPhi \circ \varGamma \circ \phi} \paren{s}$ takes the form
                  \begin{equation}
                      \hat{\varGamma} \paren{s}
                      = \paren{s^3, s^7 + \hat{b}_8 \, s^8, 0, \dots, 0}
                          + s^{6} \, \hat{\bm{w}}_{6}
                          + s^{9} \, \hat{\bm{w}}_{9}
                          + \smallo{s^{11}}
                  \end{equation}
                  with $\hat{\bm{w}}_{6}, \hat{\bm{w}}_{9} \in \R^N$, 
                  where $T_3 \in \GL{N, \R}$ is as in \cref{lem:lin_change_to_standard_basis}.
                  Hence, the curve $\varPsi \circ \hat{\varGamma} \circ \psi$ takes the form
                  \begin{equation}
                      \paren{\varPsi \circ \hat{\varGamma} \circ \psi} \paren{u}
                      = \paren{u^3, u^7 + \sigma \, u^8, 0, \dots, 0}
                          + u^{6} \, \tilde{\bm{w}}_{6}
                          + u^{9} \, \tilde{\bm{w}}_{9}
                          + \smallo{u^{11}}
                  \end{equation}
                  with constants $\tilde{\bm{w}}_{6}, \tilde{\bm{w}}_{9} \in \R^N$ and $\sigma \in \{ 0, 1 \}$ as in \cref{eq:3.7.etc:pf.sigma}.
                  This curve is \A-equivalent to $\Cusp{3, 7_{8 \sigma}}$ by \cref{thm:general.property.A.equivalence} and \eqref{eq:NR.3.7.etc}.
                  This completes the proof of the ``if'' part of the theorem.
        
          \textbf{Step 4}.
            Finally, we prove the ``only if'' part. Each of the standard singularities $\Cusp{3, 7, 8}$, $\Cusp{3, 7_{8 \sigma}, 11}$, and $\Cusp{3, 7_{8 \sigma}}$ satisfies its respective condition.
              Hence, we only need to verify that the stated conditions are invariant under \A-equivalence.

              Let us separately show the invariance under
              (a) parameter changes and (b) coordinate transformations.

              (a) Suppose that a curve $\gamma$ satisfies \cref{eq:thm.3.7.etc:conditions.no.1-1}.
                  We take a parameter change $\phi$ and consider the curve $\tilde{\gamma} \coloneqq \gamma \circ \phi$.
                  As we computed in the proof of \cref{conj:criteria.3.4.5.and.3.4.and.3.5.7.and.3.5},
                  we have
                  $\sderiv{\tilde{\gamma}}{1} = \sderiv{\tilde{\gamma}}{2} = \bm{0}$,
                  $\sderiv{\tilde{\gamma}}{3} = \paren{\sderiv{\phi}{1}}^3 \, \sderiv{\gamma}{3} \neq \bm{0}$.
                  Also, since
                  $\sderiv{\gamma}{4} = \lambda \, \sderiv{\gamma}{3}$
                  and $\sderiv{\gamma}{5} = \mu \, \sderiv{\gamma}{3}$,
                  \begin{equation}
                    \tilde{\lambda}
                    \coloneqq \lambda \sderiv{\phi}{1} + \frac{3 \sderiv{\phi}{2}}{\sderiv{\phi}{1}}
                    \quatext{and}
                    \tilde{\mu}
                    \coloneqq \mu (\sderiv{\phi}{1})^2 + 4 \lambda \sderiv{\phi}{2} + \frac{3 \sderiv{\phi}{3}}{\sderiv{\phi}{1}} + 
                    \frac{3 (\sderiv{\phi}{2})^2}{(\sderiv{\phi}{1})^2}
                  \end{equation}
                  satisfy
                  $\sderiv{\tilde{\gamma}}{4} = \tilde{\lambda} \, \sderiv{\tilde{\gamma}}{3}$
                  and $\sderiv{\tilde{\gamma}}{5} = \tilde{\mu} \, \sderiv{\tilde{\gamma}}{3}$.
                  Moreover, $\tilde{\bm{V}}_7 \coloneqq 
                    \sderiv{\tilde{\gamma}}{7}
                      - 2 \, \tilde{\lambda} \, \sderiv{\tilde{\gamma}}{6}$
                  satisfies
                  \begin{equation}
                    \tilde{\bm{V}}_7
                    \in \paren{\sderiv{\phi}{1}}^7 \, \bm{V}_7
                      + \Span{\sderiv{\gamma}{3}},
                  \end{equation}
                  and hence, $\tilde{\gamma}$ also satisfies \cref{eq:thm.3.7.etc:conditions.no.1-1} and we get $\Span{\sderiv{\tilde{\gamma}}{3}, \tilde{\bm{V}}_7} = \Span{\sderiv{\gamma}{3}, \bm{V}_7}$.
                  Additionally, we see
                  $\tilde{\bm{V}}_8
                    \in (\sderiv{\phi}{1})^8 \, \bm{V}_8 + \Span{\sderiv{\gamma}{3}, \bm{V}_7}$,
                  where $\tilde{\bm{V}}_8 \coloneqq \sderiv{\tilde{\gamma}}{8} - (\tilde{\lambda}^2 + 2 \tilde{\mu}) \, \sderiv{\tilde{\gamma}}{6}$.

                  For (i), 
                  if $\bm{V}_8 \notin \Span{\sderiv{\gamma}{3}, \bm{V}_7}$, then $\tilde{\bm{V}}_8 \notin \Span{\sderiv{\tilde{\gamma}}{3}, \tilde{\bm{V}}_7}$.
                  
                  For (ii) and (iii), if there exist $A_8, B_8 \in \R$ such that $\bm{V}_8 = A_8 \, \sderiv{\gamma}{3} + B_8 \, \bm{V}_7$,
                  then there exist $\tilde{A}_8, \tilde{B}_8 \in \R$ such that $\tilde{\bm{V}}_8 = \tilde{A}_8 \, \sderiv{\tilde{\gamma}}{3} + \tilde{B}_8 \, \tilde{\bm{V}}_7$.
                  Then, we have $\tilde{B}_8 = B_8 \sderiv{\phi}{1} + {7 \sderiv{\phi}{2}} / {\sderiv{\phi}{1}}$, and thus, we get $\tilde{B}_8 - {7 \tilde{\lambda}} / {3} = (B_8 - {7 \lambda} / {3}) \, \sderiv{\phi}{1}$. Hence, the condition \cref{eq:thm.3.7.etc:conditions.no.1-2} is invariant under parameter changes.
                  Furthermore, by computing $\sderiv{\tilde{\gamma}}{i}$ ($i = 9, 10, 11$) similarly, we obtain
                  $
                      \tilde{\bm{V}}_{11}
                      \in (\sderiv{\phi}{1})^{11} \, \bm{V}_{11} + \Span{\sderiv{\gamma}{3}, \bm{V}_7},
                  $ 
                  where we denote by $\tilde{\bm{V}}_{11}$ the value of $\bm{V}_{11}$ for $\tilde{\gamma}$ with $\tilde{\lambda}$, $\tilde{\mu}$, $\tilde{A}_8$, and $\tilde{B}_8$. 
                  Therefore, all the stated conditions are invariant under parameter changes.

                (b) Suppose that $\gamma$ satisfies \cref{eq:thm.3.7.etc:conditions.no.1-1}.
                  We take a coordinate transformation $\varPhi$ and consider the curve $\hat{\gamma} \coloneqq \varPhi \circ \gamma$.
                  \cref{cor:Bruno_for_curves} \ref{item:cor_Bruno_curve.coord} and \cref{rem:coord_transf.multi_m} with $m = 3$ give
                  $\sderiv{\hat{\gamma}}{3}
                    = J_{\varPhi} \, \sderiv{\gamma}{3}$
                  $\sderiv{\hat{\gamma}}{4}
                    = J_{\varPhi} \, \sderiv{\gamma}{4}
                    = \lambda \, \sderiv{\hat{\gamma}}{3}$,
                  and
                  $\sderiv{\hat{\gamma}}{5}
                    = J_{\varPhi} \, \sderiv{\gamma}{5}
                    = \mu \, \sderiv{\hat{\gamma}}{3}$.
                  Moreover, since
                  $\sderiv{x}{4}_j = \lambda \, \sderiv{x}{3}_j$
                  and $\sderiv{x}{5}_j = \mu \, \sderiv{x}{3}_j$
                  for $j = 1, \dots, N$,
                  we get
                  \begin{gather}
                    \sderiv{\hat{\gamma}}{6}
                    = J_{\varPhi} \, \sderiv{\gamma}{6}
                        + \frac{1}{2}
                          \sum_{i, j = 1}^{N}
                          \pfrac{^2 \varPhi}{x_i \, \partial x_j} \,
                          \sderiv{x}{3}_i \, \sderiv{x}{3}_j,
                    \quad
                    \sderiv{\hat{\gamma}}{7}
                    = J_{\varPhi} \, \sderiv{\gamma}{7}
                        + \lambda
                          \sum_{i, j = 1}^{N}
                            \pfrac{^2 \varPhi}{x_i \, \partial x_j} \,
                            \sderiv{x}{3}_i \, \sderiv{x}{3}_j, \\
                    \sderiv{\hat{\gamma}}{8}
                    = J_{\varPhi} \, \sderiv{\gamma}{8}
                        + \left( \frac{1}{2} \lambda^2 + \mu \right)
                          \sum_{i, j = 1}^{N}
                            \pfrac{^2 \varPhi}{x_i \, \partial x_j} \,
                            \sderiv{x}{3}_i \, \sderiv{x}{3}_j,
                  \end{gather}
                  where $\gamma (t) = (x_1 (t), \dots, x_N (t))$.
                  Therefore, we get
                  $\hat{\bm{V}}_7 = J_{\varPhi} \bm{V}_7$
                  and $\hat{\bm{V}}_8 = J_{\varPhi} \bm{V}_8$,
                  where we denote by $\hat{\bm{V}}_{i}$ the values of $\bm{V}_{i}$ for $\hat{\gamma}$ ($i = 7, 8, 11$).
                  Hence, the conditions \cref{eq:thm.3.7.etc:conditions.no.1-1} and $\bm{V}_8 \notin \Span{\sderiv{\gamma}{3}, \bm{V}_7}$ are invariant under coordinate transformations.

                  Next, we suppose that there exist $A_8, B_8 \in \R$ such that $\bm{V}_8 = A_8 \, \sderiv{\gamma}{3} + B_8 \, \bm{V}_7$.
                  Then, we have $\hat{\bm{V}}_8 = A_8 \, \sderiv{\hat{\gamma}}{3} + B_8 \, \hat{\bm{V}}_7$.
                  Furthermore, we have
                  \begin{align}
                    \sderiv{\hat{\gamma}}{8}
                    & = B_8 \, J_{\varPhi} \, \sderiv{\gamma}{7}
                        + \left( \lambda^2 + 2 \mu - 2 \lambda B_8 \right) J_{\varPhi} \, \sderiv{\gamma}{6}
                        + A_8 \, J_{\varPhi} \, \sderiv{\gamma}{3} \\
                    & \hspace{12em}
                        + \left( \frac{1}{2} \lambda^2 + \mu \right)
                          \sum_{i, j = 1}^{N}
                            \pfrac{^2 \varPhi}{x_i \, \partial x_j} \,
                            \sderiv{x}{3}_i \, \sderiv{x}{3}_j, \\
                    \sderiv{\hat{\gamma}}{9}
                    & = J_{\varPhi} \, \sderiv{\gamma}{9}
                        + \sum_{i, j = 1}^{N}
                            \pfrac{^2 \varPhi}{x_i \, \partial x_j} \,
                            \sderiv{x}{3}_i \, \sderiv{x}{6}_j
                        + \lambda \, \mu
                          \sum_{i, j = 1}^{N}
                            \pfrac{^2 \varPhi}{x_i \, \partial x_j} \,
                            \sderiv{x}{3}_i \, \sderiv{x}{3}_j \\
                    & \hspace{12em}
                        + \frac{1}{6}
                          \sum_{i, j, k = 1}^{N}
                            \pfrac{^3 \varPhi}{x_i \, \partial x_j \, \partial x_k} \,
                            \sderiv{x}{3}_i \, \sderiv{x}{3}_j \, \sderiv{x}{3}_k, \\
                    \sderiv{\hat{\gamma}}{10}
                    & = J_{\varPhi} \, \sderiv{\gamma}{10}
                        + \sum_{i, j = 1}^{N}
                            \pfrac{^2 \varPhi}{x_i \, \partial x_j} \,
                            \sderiv{x}{3}_i \, \sderiv{x}{7}_j
                        + \lambda
                          \sum_{i, j = 1}^{N}
                            \pfrac{^2 \varPhi}{x_i \, \partial x_j} \,
                            \sderiv{x}{3}_i \, \sderiv{x}{6}_j \\
                    & \hspace{3em}
                        + \frac{1}{2} \mu^2
                          \sum_{i, j = 1}^{N}
                            \pfrac{^2 \varPhi}{x_i \, \partial x_j} \,
                            \sderiv{x}{3}_i \, \sderiv{x}{3}_j
                        + \frac{1}{2} \lambda
                          \sum_{i, j, k = 1}^{N}
                            \pfrac{^3 \varPhi}{x_i \, \partial x_j \, \partial x_k} \,
                            \sderiv{x}{3}_i \, \sderiv{x}{3}_j \, \sderiv{x}{3}_k, \\
                    \sderiv{\hat{\gamma}}{11}
                    & = J_{\varPhi} \, \sderiv{\gamma}{11}
                        + \left( B_8 + \lambda \right)
                          \sum_{i, j = 1}^{N}
                            \pfrac{^2 \varPhi}{x_i \, \partial x_j} \,
                            \sderiv{x}{3}_i \, \sderiv{x}{7}_j \\
                    & \hspace{3em}
                        + \left( \lambda^2 + 3 \mu - 2 \lambda \, B_8 \right)
                          \sum_{i, j = 1}^{N}
                            \pfrac{^2 \varPhi}{x_i \, \partial x_j} \,
                            \sderiv{x}{3}_i \, \sderiv{x}{6}_j
                        + A_8
                          \sum_{i, j = 1}^{N}
                            \pfrac{^2 \varPhi}{x_i \, \partial x_j} \,
                            \sderiv{x}{3}_i \, \sderiv{x}{3}_j \\
                    & \hspace{3em}
                        + \frac{1}{2} \left( \lambda^2 + \mu \right)
                          \sum_{i, j, k = 1}^{N}
                            \pfrac{^3 \varPhi}{x_i \, \partial x_j \, \partial x_k} \,
                            \sderiv{x}{3}_i \, \sderiv{x}{3}_j \, \sderiv{x}{3}_k,
                  \end{align}
                  and hence, $\hat{\bm{V}}_{11} = J_{\varPhi} \bm{V}_{11}$.
                  Therefore, all the stated conditions are invariant under coordinate transformations.
                  This completes the proof of \cref{conj:criteria.4.5.and.other}.
    \end{proof}

  \subsection{Singularities of Multiplicity $4$ in \Rpdf[N]}
  \label{sec:Singularities.multiplicity.4}%

    In this subsection,
    we construct criteria for classifying singularities of multiplicity $4$,
    in particular,
    $\paren{4, 5}$-cusps,
    $\paren{4, 5_{\pm 7}}$-cusps
    $\paren{4, 5, 6}$-cusps,
    $\paren{4, 5, 7}$-cusps,
    $\paren{4, 5, 11}$-cusps,    
    $\paren{4, 5_{\pm 7}, 11}$-cusps, and
    $\paren{4, 5, 6, 7}$-cusps.
    
    \begin{thm}
        \label{conj:criteria.4.5.and.other}%
        A curve $\gamma \colon \paren{\R, 0} \to \paren{\R^N, \bzero}$
        in $\R^N$ is \A-equivalent to
        \begin{enumerate}[label={(\arabic*)}]
            \item $\Cusp{4, 5, 6, 7} (t)
                = \paren{t^4, t^5, t^6, t^7, 0, \dots, 0}$,
            \item $\Cusp{4, 5, 6}  (t)
                = \paren{t^4, t^5, t^6, 0, \dots, 0}$,
            \item $\Cusp{4, 5, 7}  (t)
                = \paren{t^4, t^5, t^7, 0, \dots, 0}$,
            \item 
                \begin{enumerate}[label={(\roman*)}]
                    \item $\Cusp{4, 5_{7 \sigma}, 11} (t) = \paren{t^4, t^5 + \sigma \, t^7, t^{11}, 0, \dots, 0} \quad
                        (\sigma \in \cparen{0, \pm 1})$,
                    \item $\Cusp{4, 5_{7 \sigma}}(t) = \paren{t^4, t^5 + \sigma \, t^7, 0, \dots, 0} \quad
                        (\sigma \in \cparen{0, \pm 1})$,
                \end{enumerate}
        \end{enumerate}
        if and only if
        \begin{equation}
            \sderiv{\gamma}{1}
            = \sderiv{\gamma}{2}
            = \sderiv{\gamma}{3}
            = \bzero
            \neq \sderiv{\gamma}{4},
            \quad
            \sderiv{\gamma}{5}
            \notin \Span{\sderiv{\gamma}{4}},
            \label{eq:criteria.45like}
        \end{equation}
        and respectively,
        \begin{enumerate}[label={(\arabic*)}]
            \item %
                $\sderiv{\gamma}{6}
                    \notin \Span{\sderiv{\gamma}{4}, \sderiv{\gamma}{5}}$,
                and
                $\sderiv{\gamma}{7}
                    \notin \Span{\sderiv{\gamma}{4}, \sderiv{\gamma}{5}, \sderiv{\gamma}{6}}$,
            \item %
                $\sderiv{\gamma}{6}
                    \notin \Span{\sderiv{\gamma}{4}, \sderiv{\gamma}{5}}$,
                and
                $\sderiv{\gamma}{7}
                    \in \Span{\sderiv{\gamma}{4}, \sderiv{\gamma}{5}, \sderiv{\gamma}{6}}$,
            \item %
                $\sderiv{\gamma}{6}
                    \in \Span{\sderiv{\gamma}{4}, \sderiv{\gamma}{5}}
                    \notni \sderiv{\gamma}{7}$,
            \item %
                there exist
                $\lambda_2, \lambda_3, \mu_1, \mu_2
                    \in \R$
                such that
                \begin{equation}
                    \label{eq:thm.criteria.4.5.like:defn.lambda.mu}
                    \sderiv{\gamma}{6}
                    = \lambda_2 \, \sderiv{\gamma}{4}
                        + \mu_1 \, \sderiv{\gamma}{5},
                    \quad
                    \sderiv{\gamma}{7}
                    = \lambda_3 \, \sderiv{\gamma}{4}
                        + \mu_2 \, \sderiv{\gamma}{5},
                    \quad
                    \sigma_{\paren{4, 5_{\pm 7}}}
                    = \mathemph{\sigma},
                \end{equation}
                and respectively,
                \begin{enumerate*}[label={(\roman*)}]
                    \item $\V_{\paren{4, 5, 11}}
                        \notin \Span{\sderiv{\gamma}{4}, \sderiv{\gamma}{5}}$,
                    \item $\V_{\paren{4, 5, 11}}
                        \in \Span{\sderiv{\gamma}{4}, \sderiv{\gamma}{5}}$,
                \end{enumerate*}
          where
          \begin{equation}
            \label{eq:defn.45etc.Vector4511}%
            \V_{\paren{4, 5, 11}}
            \coloneqq
              \sderiv{\gamma}{11}
              - 2 \, \mu_1 \, \sderiv{\gamma}{10}
              + \paren{-\lambda_2 + 2 \, \mu_1^2 - \mu_2} \, \sderiv{\gamma}{9}
              + 2 \, \paren{2 \, \lambda_2 \, \mu_1 - \lambda_3} \, \sderiv{\gamma}{8}
          \end{equation}
          and
            $\sigma_{(4, 5_{\pm 7})}
            \coloneqq \sgn (\mu_2 - 5\lambda_2 / 4 - 11\mu_1^2 / 10)
            \in \cparen{0, \pm 1}$.
        \end{enumerate}
    \end{thm}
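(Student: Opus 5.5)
We follow the four-step procedure of \cref{sec:Introduction}, as in the proofs of \cref{thm:criteria.2.3.and.2.5.and.2.7,conj:criteria.3.4.5.and.3.4.and.3.5.7.and.3.5,conj:criteria.3.7.etc}.

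\textbf{Step~1.} We compute the relevant non-representable sets:
\begin{equation}
  \cNR{2}{4,5,6,7} = \cparen{0,\dots,7}, \qquad
  \cNR{2}{4,5,6} = \cNR{2}{4,5,7} = \cparen{0,\dots,7} \cup \cparen{11},
\end{equation}
together with $\cNR{2}{4,5} \cup \cNR{2}{4,5,11} = \cparen{0,\dots,7,11}$. Indeed, $\cR{2}{4,5}$ contains $8,9,10$ and every integer $\geq 12$. For $\Cusp{4, 5_{7\sigma}}$ and $\Cusp{4, 5_{7\sigma},11}$ we argue as in Step~1 of \cref{conj:criteria.3.7.etc}: we exhibit $f_4 = X_1$ together with explicit $f_i \in \Fcns{2}{i}{\cdot}$ for $i = 8,9,10,12,13,14,15$, built from products of $X_1, X_2$ divided by a unit. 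Then \cref{prop:cR.and.cNR.of.curve} gives $\cNR{2}{\cdot} \subset \cparen{1,\dots,7,11}$.

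\textbf{Steps~2 and~3.} Assume \cref{eq:criteria.45like}, and use \cref{lem:lin_change_to_standard_basis} to normalize $\sderiv{\gamma}{4} = \be_1$ and $\sderiv{\gamma}{5} = \be_2$.
\begin{itemize}
  \item Cases (1)--(3) are essentially linear-algebraic. In case (1) we additionally normalize $\sderiv{\gamma}{6} = \be_3$ and $\sderiv{\gamma}{7} = \be_4$, and \cref{thm:A.equivalence.suff.cond.1} applies at once.
  \item In case (2), $\sderiv{\gamma}{7} \in \Span{\be_1,\be_2,\be_3}$. We kill its components with a parameter change $s + c_1 s^2 + c_2 s^3$ and a linear coordinate change $x_1 \mapsto x_1 + p x_2 + q x_3$, $x_2 \mapsto x_2 + r x_3$. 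The $t^{11}$ term is then irrelevant by \cref{thm:A.equivalence.suff.cond.1}, since $11 \in \cR{2}{4,5,6}$.
  \item In case (3) we use $\sderiv{\gamma}{6} \in \Span{\be_1,\be_2}$ and normalize $\sderiv{\gamma}{7} = \be_3$. A parameter change and a linear change remove $\sderiv{\gamma}{6}$, leaving $(s^4, s^5, s^7, 0, \dots, 0) + \smallo{s^7}$ modulo terms of order $\geq 8$. Here $11 = 4+7 \in \cR{2}{4,5,7}$, so we conclude again.
\end{itemize}

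Case (4) is the delicate one. Here $\sderiv{\gamma}{6}, \sderiv{\gamma}{7} \in \Span{\be_1,\be_2}$. As in \cref{conj:criteria.3.7.etc}, we choose a parameter change $\phi(s) = s + \sum q_i s^{i+1}$ and a coordinate transformation with quadratic terms in $x_1, x_2$, plus a term $x_1 x_2 \bw$ (and possibly $x_1^2 \bw'$) with $\bw, \bw' \in \Span{\be_3,\dots,\be_N}$. With these we bring the curve to
\begin{equation}
  \paren{s^4, \; s^5 + \hat b_7 s^7, \; 0, \dots, 0} + s^{11} \hat{\bv}_{11} + (\text{terms of orders } 8,9,10,\ \geq 12) .
\end{equation}
We must show two things:
\begin{itemize}
  \item $\hat b_7$ is a positive multiple of $\mu_2 - \tfrac54 \lambda_2 - \tfrac{11}{10} \mu_1^2$, as in the known planar computation \cite{Matsushita_2024_classifications_4_5_cusps_arXiv};
  \item $\hat{\bv}_{11} \equiv \V_{\paren{4,5,11}} \pmod{\Span{\be_1,\be_2}}$, which follows by tracking the $\Span{\be_3,\dots,\be_N}$ components of the $t^8,\dots,t^{11}$ coefficients.
\end{itemize}
A final weighted scaling $(x_1,x_2,x_3) \mapsto (c^4 x_1, c^5 x_2, c^{11} x_3)$, $u \mapsto u/c$ with $c = \abs{\hat b_7}^{1/2}$ normalizes $\hat b_7$ to $\sigma$. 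We then invoke \cref{thm:general.property.A.equivalence} using the $\cNR{2}{\cdot}$ bound from Step~1.

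\textbf{Step~4.} Invariance of the span conditions in (1)--(3) is immediate from \cref{prop:multi_and_lin_dep.invariant} with $m = 4$, since $n \leq 7 = 2m-1$.

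For (4), we compute with \cref{cor:Bruno_for_curves} as in the previous proofs.
\begin{itemize}
  \item Under parameter changes, $(\tilde\lambda_2, \tilde\mu_1, \tilde\lambda_3, \tilde\mu_2)$ transform so that $\mu_2 - \tfrac54\lambda_2 - \tfrac{11}{10}\mu_1^2$ is multiplied by $(\sderiv{\phi}{1})^2 > 0$.
  \item Under parameter changes, $\tilde{\V}_{(4,5,11)} \in (\sderiv{\phi}{1})^{11} \V_{(4,5,11)} + \Span{\sderiv{\gamma}{4}, \sderiv{\gamma}{5}}$.
  \item Under coordinate transformations, the $\sderiv{\hat\gamma}{i}$ for $i \leq 7$ equal $J_{\varPhi} \sderiv{\gamma}{i}$, so the constants $\lambda_2, \lambda_3, \mu_1, \mu_2$ are unchanged.
  \item For $8 \leq i \leq 11$, the second- and third-order terms of $\varPhi$ contribute bilinear expressions in $\sderiv{x}{4}, \sderiv{x}{5}, \sderiv{x}{6}, \sderiv{x}{7}$. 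We verify that these cancel in $\V_{(4,5,11)}$ modulo $J_\varPhi \Span{\sderiv{\gamma}{4}, \sderiv{\gamma}{5}}$.
\end{itemize}

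The main obstacle is the order-11 bookkeeping in case (4), in both Step~2 and Step~4. That is where the specific coefficients of \cref{eq:defn.45etc.Vector4511} must emerge, and we would organize it by working modulo $\Span{\be_1,\be_2}$ throughout to discard most terms.
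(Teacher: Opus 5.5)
Your outline follows the paper's proof step for step: same normal forms, same weighted rescaling with $c = \abs{\hat{b}_7}^{1/2}$, same invariance computations. However, Step~1 makes a false claim for $\sigma = \pm 1$, and the error carries into case~(4). When $\sigma \neq 0$ there is no $f_9 \in \Fcns{2}{9}{\Cusp{4, 5_{7 \sigma}}}$. Suppose $J_f(\bzero) = O$, and let $c_{20} X_1^2 + c_{11} X_1 X_2 + c_{02} X_2^2$ be the quadratic part of $f(X_1, X_2, 0, \dots, 0)$. Everything else in $f$ contributes $O(t^{12})$ along the curve. So the coefficients of $t^8, t^9, t^{10}, t^{11}$ in $f \circ \Cusp{4, 5_{7 \sigma}}$ are $c_{20}, c_{11}, c_{02}, \sigma c_{11}$, and $t^9$ cannot be produced. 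The same holds for $\Cusp{4, 5_{7\sigma}, 11}$, because $X_3 = t^{11}$ enters only at order $\geq 15$. Hence $9 \in \cNR{2}{\Cusp{4, 5_{\pm 7}}} \cap \cNR{2}{\Cusp{4, 5_{\pm 7}, 11}}$, and the correct bound is $\{1, \dots, 7, 9, 11\}$, as in the paper.

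Consequently, in case~(4) you cannot leave order-$9$ terms to \cref{thm:general.property.A.equivalence}. The whole $s^9$ coefficient must be removed, and its $\Span{\be_3, \dots, \be_N}$-part can only be removed by the term $x_1 x_2 \bw$. Since $x_1 x_2$ composed with your normal form is $s^9 + \hat{b}_7 s^{11} + \cdots$, this removal shifts the transverse $s^{11}$ coefficient by $-\hat{b}_7$ times the transverse $s^9$ coefficient. So $\hat{\bv}_{11}$ is meaningful only after the order-$9$ normalization. This is exactly where the $\sderiv{\gamma}{9}$-term of $\V_{(4,5,11)}$ comes from. Its coefficient $-\lambda_2 + 2\mu_1^2 - \mu_2$ equals $-(\mu_2 - \tfrac54\lambda_2 - \tfrac{11}{10}\mu_1^2)$, which is $-\hat{b}_7$ before rescaling, plus the $s^{11}$-coefficient $9c_2 + 36c_1^2$ of $\phi(s)^9$. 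Here $c_1 = -\mu_1/5$ and $c_2 = -\lambda_2/4 - 3\mu_1^2/50$ are forced by killing the $s^6$ terms.

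A concrete failure: $\gamma(t) = (t^4, t^5 + t^7, t^9, 0, \dots, 0)$ is already in your normal form with $\hat{b}_7 = 1$ and $\hat{\bv}_{11} = \bzero$. Your argument would therefore call it a $(4, 5_{+7})$-cusp. But $\V_{(4,5,11)} = -\be_3$, and indeed $x_3 \mapsto x_3 - x_1 x_2$ turns it into $(t^4, t^5 + t^7, -t^{11}, 0, \dots, 0)$, a $(4, 5_{+7}, 11)$-cusp. So your congruence $\hat{\bv}_{11} \equiv \V_{(4,5,11)}$ fails for the normal form as you wrote it.

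The fix is what the paper does. Put $x_1 x_2$-terms in every component ($p_2 x_1 x_2$, $p_3 x_1 x_2$, $x_1 x_2 \bw_9$) and add $s^7, s^8$ terms to $\phi$. Then the normal form has no $s^9$ term and no $\be_1, \be_2$-components at order $11$.

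Two smaller slips:
\begin{enumerate}
\item The display $\cNR{2}{4,5,6} = \cNR{2}{4,5,7} = \{0, \dots, 7\} \cup \{11\}$ is wrong, since $11 = 5 + 6 = 4 + 7$. Both sets are $\{0, \dots, 7\}$, which is what you actually use later.
\item In case~(2) the parameter change needs a $c_3 s^4$ term. Once $c_1, c_2$ and the linear coefficients are fixed by the lower-order conditions, $x_1 \mapsto x_1 + p x_2 + q x_3$ contributes nothing to the $s^7$ coefficient of the first component. Try $(t^4 + t^7, t^5, t^6)$.
\end{enumerate}
Your factor $(\sderiv{\phi}{1})^{11}$ for $\V_{(4,5,11)}$ under parameter changes is the correct weight; the paper's exponent $14$ appears to be a misprint.
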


    \begin{proof}
        We prove the claim following the procedure we outlined in \cref{sec:Introduction}.
        First, we show the ``if'' part of the claim.
        
        \textbf{Step 1}.
        For $\sigma \in \{ 0, \pm 1 \}$,
        we note that 
              $4 \in \cR{1}{\Cusp{4, 5_{7 \sigma}}}$ 
              and
              $13, 10, 15 \in \cR{2}{\Cusp{4, 5_{7 \sigma}}}$,
          since 
          $f_4 \in \Fcns{1}{4}{\Cusp{4, 5_{7 \sigma}}}$
          and 
          $f_{i} \in \Fcns{2}{i}{\Cusp{4, 5_{7 \sigma}}}$
          ($i = 13, 10, 15$),
          where 
          \begin{align}
            f_4 \paren{X_1, X_2, \dots, X_N} & = X_1, \\
            f_{13} \paren{X_1, X_2, \dots, X_N}
            & = 
            \left(X_1^2 - \frac{\sigma \paren{X_2^2 - 3 \sigma X_1^3 - \sigma X_1^4}}{1 - \sigma^2 X_1^2}\right) X_2, \\
            f_{10} \paren{X_1, X_2, \dots, X_N}
            & = \frac{X_2^2 - 2 \sigma X_1^3}{1 + \sigma^2 X_1}, \\
            f_{15} \paren{X_1, X_2, \dots, X_N}
            & = \frac{\paren{X_2^2 - 3 \sigma X_1^3 - \sigma X_1^4} X_2}{1 - \sigma^2 X_1^2}.
          \end{align}
          Hence, by \cref{prop:cR.and.cNR.of.curve}, we get
          \begin{gather} \label{eq:NR.mult.4}
              \cNR{2}{\Cusp{4, 5, 6, 7}}, \,
                \cNR{2}{\Cusp{4, 5, 6}}, \,
                \cNR{2}{\Cusp{4, 5, 7}}
                 \subset \cparen{1, 2, 3, 4, 5, 6, 7}, \\
                \cNR{2}{\Cusp{4, 5, 11}}, \,
                \cNR{2}{\Cusp{4, 5}}
                 \subset \cparen{1, 2, 3, 4, 5, 6, 7, 11}, \quad \text{and}  \\
                \cNR{2}{\Cusp{4, 5_{\pm 7}, 11}}, \,
                \cNR{2}{\Cusp{4, 5_{\pm 7}}}
                 \subset \cparen{1, 2, 3, 4, 5, 6, 7, 9, 11}.
          \end{gather}
            
          \textbf{Steps 2 and 3}.
            (1) If $\gamma$ satisfies
            \begin{equation}
              \sderiv{\gamma}{6}
                \notin \Span{\sderiv{\gamma}{4}, \sderiv{\gamma}{5}}
              \quad \text{and} \quad \sderiv{\gamma}{7}
                \notin \Span{\sderiv{\gamma}{4}, \sderiv{\gamma}{5}, \sderiv{\gamma}{6}},
            \end{equation}
              then the curve
              $\varGamma \paren{t}
                = T_1 \, \gamma \paren{t}$,
              obtained by  $T_1 \in \GL{N, \R}$ defined in \cref{lem:lin_change_to_standard_basis},
              satisfies
                $\sderiv{\varGamma}{1}
                = \sderiv{\varGamma}{2}
                = \sderiv{\varGamma}{3}
                = \bzero
                \quatext{and}
                \sderiv{\varGamma}{i}
                = \be_{i - 3}
                \quad
                (4 \leq i \leq 7)$.
              Hence, the curve $\varGamma$ takes the form 
                $\varGamma \paren{t}
                = \paren{t^4, t^5, t^6, t^7, 0, \dots, 0}
                  + \smallo{t^7}$.
              This curve $\varGamma$ is \A-equivalent to $\Cusp{4, 5, 6, 7}$ by
              \cref{thm:A.equivalence.suff.cond.1} and \eqref{eq:NR.mult.4}.
            
            (2) If $\gamma$ satisfies
              $\sderiv{\gamma}{6}
                \notin \Span{\sderiv{\gamma}{4}, \sderiv{\gamma}{5}}$
              and $\sderiv{\gamma}{7}
                \in \Span{\sderiv{\gamma}{4}, \sderiv{\gamma}{5}, \sderiv{\gamma}{6}}$,
              then the curve
              $\varGamma \paren{t}
                = T_2 \, \gamma \paren{t}$
              satisfies
                $\sderiv{\varGamma}{1}
                = \sderiv{\varGamma}{2}
                = \sderiv{\varGamma}{3}
                = \bzero$, and 
                $\sderiv{\varGamma}{i}
                = \be_{i - 3}$ 
                $(i = 4, 5, 6)$, and
                $\sderiv{\varGamma}{7}
                \in \Span{\be_1, \be_2, \be_3}$,
                where $T_2 \in \GL{N,\R}$ is given in \cref{lem:lin_change_to_standard_basis}.
              Hence, the curve $\varGamma$ takes the form
              \begin{equation}
                \label{eq:456.standard_form}%
                \varGamma \paren{t}
                = \paren{t^4 + a \, t^7, t^5 + b \, t^7, t^6 + c \, t^7, 0, \dots, 0}
                  + \smallo{t^7}
              \end{equation}
              with constants $a, b, c \in \R$.
              By an argument similar to that in the proof of \cref{thm:criteria.2.3.and.2.5.and.2.7}, there exist $c_1, c_2, c_3, p_1, p_2, p_3 \in \R$ such that the parameter change
                $
                \phi \paren{s}
                = s + c_{1} \, s^2 + c_{2} \, s^3 + c_{3} \, s^4
              $
              and the coordinate transformation
              $
                \varPhi \paren{x_1, x_2, \dots, x_N}
                = \paren{%
                      x_1 + p_1 \, x_2 + p_2 \, x_3, \,
                      x_2 + p_3 \, x_3, \,
                      x_3, \, \dots, \, x_N
                    }
              $
              give
              $
                \paren{\varPhi \circ \varGamma \circ \phi} \paren{s}
                = \paren{s^4, s^5, s^6, 0, \dots, 0}
                  + \smallo{s^7}.
              $
              This curve is \A-equivalent to $\Cusp{4, 5, 6}$ by
              \cref{thm:A.equivalence.suff.cond.1} and \eqref{eq:NR.mult.4}.
            
            (3) If $\gamma$ satisfies
              $\sderiv{\gamma}{6}
                \in \Span{\sderiv{\gamma}{4}, \sderiv{\gamma}{6}}
                \notni \sderiv{\gamma}{7}$,
              then, by using a matrix $T_3 \in \GL{N,\R}$ in \cref{lem:lin_change_to_standard_basis},
              $\varGamma \paren{t}
                = T_3 \, \gamma \paren{t}$
              satisfies
                $\sderiv{\varGamma}{1}
                = \sderiv{\varGamma}{2}
                = \sderiv{\varGamma}{3}
                = \bzero$, 
                $\sderiv{\varGamma}{4} = \be_1$, 
                $\sderiv{\varGamma}{5} = \be_2$, 
                $\sderiv{\varGamma}{6} \in \Span{\be_1, \be_2}$, and
                $\sderiv{\varGamma}{7} = \be_3$.
              Hence, the curve $\varGamma$ takes the form
              \begin{equation}
                \label{eq:457.standard_form}%
                \varGamma \paren{t}
                = \paren{t^4 + a \, t^6, t^5 + b \, t^6, t^7, 0, \dots, 0}
                  + \smallo{t^7}
              \end{equation}
              with constants $a, b \in \R$.
              There exist $c_1, c_2, p_1, p_2, p_3 \in \R$ such that the parameter change
              $
                \phi \paren{s}
                = s + c_{1} \, s^2 + c_{2} \, s^3
              $
              and the coordinate transformation
              $
                \varPhi \paren{x_1, x_2, \dots, x_N}
                = \paren{%
                      x_1 + p_1 \, x_2 + p_2 \, x_3, \,
                      x_2 + p_3 \, x_3, \,
                      x_3, \, \dots, \, x_N
                    }
              $
              yield
              $
                \paren{\varPhi \circ \varGamma \circ \phi} \paren{s}
                = \paren{s^4, s^5, s^7, 0, \dots, 0}
                  + \smallo{s^7}.
              $
              This curve is \A-equivalent to $\Cusp{4, 5, 7}$ by
              \cref{thm:A.equivalence.suff.cond.1} and \eqref{eq:NR.mult.4}.
            
            (4) If there exist $\lambda_2, \lambda_3, \mu_1, \mu_2 \in \R$ such that
                $\sderiv{\gamma}{i} = \lambda_{i - 4} \, \sderiv{\gamma}{4} + \mu_{i - 5} \, \sderiv{\gamma}{5}$ 
                $(i = 6, 7)$,
              then the curve
              $\varGamma \paren{t} = T_4 \, \gamma \paren{t}$
              satisfies
                $\sderiv{\varGamma}{1}
                = \sderiv{\varGamma}{2}
                = \sderiv{\varGamma}{3}
                = \bm{0}$, 
                $\sderiv{\varGamma}{4} = \bm{e}_1$,
                $\sderiv{\varGamma}{5} = \bm{e}_2$,
                $\sderiv{\varGamma}{i}
                = \lambda_{i - 4} \, \be_1
                  + \mu_{i - 5} \, \be_2$ 
                $(i = 6, 7)$,
                where $T_4 \in \GL{N, \R}$ 
                is obtained in \cref{lem:lin_change_to_standard_basis}.
              Hence, the curve $\varGamma$ takes the form
              \begin{equation}
                \label{eq:45etc.standard_form}%
                \varGamma \paren{t}
                = \left(%
                      t^4 + \sum_{i = 6}^{11} a_{i - 4} \, t^{i},
                      t^5 + \sum_{i = 6}^{11} b_{i - 5} \, t^{i},
                      0, \dots, 0
                    \right)
                  + \sum_{i = 8}^{11} t^i \, \bv_i
                  + \smallo{t^{11}}
              \end{equation}
              with constants $a_2, \dots, a_7, b_1, \dots, b_6 \in \R$
              and $\bv_8, \dots, \bv_{11}
                \in \Span{\be_3, \dots, \be_N}$,
              where $a_2 = \lambda_2$, $a_3 = \lambda_3$,
              $b_1 = \mu_1$, and $b_2 = \mu_2$.
          By an argument similar to that in the proof of \cref{thm:criteria.2.3.and.2.5.and.2.7}, there exist $c_1, c_2, c_3, c_6, c_7, p_1, p_2, p_3 \in \R$ and $\bm{w}_9 \in \Span{\be_3, \dots, \be_N}$ such that the parameter change
            $
            \phi \paren{s}
            = s + \sum_{i = 1, 2, 3, 6, 7} c_{i} \, s^{i + 1}
          $
          and the coordinate transformation
          \begin{equation}
            \varPhi \paren{x_1, x_2, \dots, x_N}
            = \paren{%
                  x_1 + p_1 \, x_2 + p_2 \, x_1 x_2, \,
                  x_2 + p_3 \, x_1 x_2, \,
                  x_3, \, \dots, \, x_N
                }
                + x_1 x_2 \, \bm{w}_9
          \end{equation}
          yield
          \begin{equation}
            \paren{\varPhi \circ \varGamma \circ \phi} \paren{s}
            = \paren{s^4, s^5 + B_2 \, s^7, 0, \dots, 0}
              + s^{8} \, \bm{w}_{8}
              + s^{10} \, \bm{w}_{10}
              + s^{11} \, \bm{w}_{11}
              + \smallo{s^{11}}
          \end{equation}
          with $B_2 \in \R$, $\bm{w}_8, \bm{w}_{10} \in \R^N$, and $\bm{w}_{11} \in \Span{\be_3, \dots, \be_N}$.
          Then, we get
          $
              B_2 = \mu_2 - 5 \lambda_2/4 - 11 \mu_1^2/10
          $
          and
          \begin{equation}
              \bm{w}_{11}
              = \bm{v}_{11}
                  - 2 \, \mu_1 \, \bm{v}_{10}
                  + \paren{-\lambda_2 + 2 \, \mu_1^2 - \mu_2} \, \bm{v}_{9}
                  + 2 \, \paren{2 \, \lambda_2 \, \mu_1 - \lambda_3} \, \bm{v}_{8}.
          \end{equation}
          We note that $T_4 \, \V_{(4, 5, 11)} \in \bm{w}_{11} + \Span{\bm{e}_1, \bm{e}_2}$.
          We set $p \coloneqq \sqrt{\abs{B_2}}$, 
          \begin{gather}
            \varPsi \paren{x_1, x_2, \dots, x_N}
            \coloneqq
              \begin{cases}
                \paren{x_1, x_2, \dots, x_N}
                    & (B_2 = 0), \\
                \paren{p^4 \, x_1, p^5 \, x_2, p^{11} \, x_3, x_4, \dots, x_N}
                    & (B_2 \neq 0),
              \end{cases}
              \quad \text{and} \\
              \psi \paren{u} 
              \coloneqq
              \begin{cases}
                u & (B_2 = 0), \\
                \sfrac{u}{p} & (B_2 \neq 0).
              \end{cases}
          \end{gather}
          We consider two cases: (\rnum{1}) and (\rnum{2}).
          
              (\rnum{1}) If $\V_{\paren{4, 5, 11}}
                  \notin \Span{\sderiv{\gamma}{4}, \sderiv{\gamma}{5}}$,
                  then $\bm{w}_{11} \neq \bm{0}$.
                  Thus, the curve
                  $\hat{\varGamma} \paren{s} \coloneqq T_5 \, \paren{\varPhi \circ \varGamma \circ \phi} \paren{s}$ takes the form
                  \begin{equation}
                      \hat{\varGamma} \paren{s}
                      = \paren{s^4, s^5 + B_2 \, s^7, s^{11}, 0, \dots, 0}
                          + s^{8} \, \hat{\bm{w}}_{8}
                          + s^{10} \, \hat{\bm{w}}_{10}
                          + \smallo{s^{11}}
                  \end{equation}
                  with $\hat{\bm{w}}_{8}, \hat{\bm{w}}_{10} \in \R^N$, 
                  where $T_5 \in \GL{N, \R}$ is as in \cref{lem:lin_change_to_standard_basis}.
                  Hence, the curve $\paren{\varPsi \circ \hat{\varGamma} \circ \psi} \paren{u}$ takes the form
                  \begin{equation}
                      \paren{\varPsi \circ \hat{\varGamma} \circ \psi} \paren{u}
                      = \paren{u^4, u^5 + \paren{\sgn B_2} \, u^7, u^{11}, 0, \dots, 0}
                          + u^{8} \, \tilde{\bm{w}}_{8}
                          + u^{10} \, \tilde{\bm{w}}_{10}
                          + \smallo{u^{11}}
                  \end{equation}
                  with constants $\tilde{\bm{w}}_{8}, \tilde{\bm{w}}_{10} \in \R^N$.
                  This curve is \A-equivalent to the curve $t \mapsto \paren{t^4, t^5 + \sigma_{\paren{4, 5_{\pm 7}}} \, t^7, t^{11}, 0, \dots, 0}$ by \cref{thm:A.equivalence.suff.cond.1} and \eqref{eq:NR.mult.4}, 
                  since $\sgn B_2 = \sigma_{\paren{4, 5_{\pm 7}}}$.
              
              (\rnum{2}) If $\V_{\paren{4, 5, 11}}
                  \in \Span{\sderiv{\gamma}{4}, \sderiv{\gamma}{5}}$,
                  then $\bm{w}_{11} = \bm{0}$.
                  Thus, the curve
                  $\paren{
                  \varPsi \circ \varPhi \circ \varGamma \circ \phi \circ \psi
                  } \paren{u}$ 
                  takes the form
                  \begin{equation}
                      \paren{\varPsi \circ \varPhi \circ \varGamma \circ \phi \circ \psi} \paren{u}
                      = \paren{u^4, u^5 + \paren{\sgn B_2} \, u^7, \dots, 0}
                          + u^{8} \, \tilde{\bm{w}}_{8}
                          + u^{10} \, \tilde{\bm{w}}_{10}
                          + \smallo{u^{11}}
                  \end{equation}
                  with constants $\tilde{\bm{w}}_{8}, \tilde{\bm{w}}_{10} \in \R^N$.
                  This curve is \A-equivalent to the curve $t \mapsto \paren{t^4, t^5 + \sigma_{\paren{4, 5_{\pm 7}}} \, t^7, 0, \dots, 0}$ by 
                  \cref{thm:A.equivalence.suff.cond.1} and \eqref{eq:NR.mult.4}.
                  This completes the proof of the ``if'' part of the theorem.
        
          \textbf{Step 4}.
          Finally, we prove the ``only if'' part of the claim.
          Each of the standard singularities
          $\Cusp{4, 5, 6, 7}$, $\Cusp{4, 5, 6}$, $\Cusp{4, 5, 7}$,
          $\Cusp{4, 5, 11}$, $\Cusp{4, 5_{\pm 7}, 11}$,
          $\Cusp{4, 5}$, and $\Cusp{4, 5_{\pm 7}}$
          satisfies its respective conditions.
          Hence, we only need to verify that the stated conditions are invariant under \A-equivalence.
          Most of the conditions stated in \cref{conj:criteria.4.5.and.other} the claim are immediately seen to be invariant under \A-equivalence from \cref{prop:multi_and_lin_dep.invariant}.
          It remains to verify the invariance of the value of $\sigma_{\paren{4, 5_{\pm 7}}}$ and the condition $\V_{\paren{4, 5, 11}} \notin \Span{\sderiv{\gamma}{4}, \sderiv{\gamma}{5}}$ under \cref{eq:defn.45etc.Vector4511,eq:criteria.45like,eq:thm.criteria.4.5.like:defn.lambda.mu}.
          Suppose that $\gamma$ satisfies
          \cref{eq:criteria.45like,eq:thm.criteria.4.5.like:defn.lambda.mu}.
          Let us separately show the invariance under
          (a) parameter changes and (b) coordinate transformations.

        (a)   We take a parameter change $\phi$
              and consider the curve $\tilde{\gamma} \coloneqq \gamma \circ \phi$.
              We can compute as follows: 
              \begin{align}
                \sderiv{\tilde{\gamma}}{4}
                & = \paren{\sderiv{\phi}{1}}^4 \, \sderiv{\gamma}{4},
                \qquad
                \sderiv{\tilde{\gamma}}{5}
                = \paren{\sderiv{\phi}{1}}^5 \, \sderiv{\gamma}{5}
                  + 4 \sderiv{\phi}{2} \paren{\sderiv{\phi}{1}}^3 \, \sderiv{\gamma}{4}, \\
                \sderiv{\tilde{\gamma}}{6}
                & = \paren{\sderiv{\phi}{1}}^4\left(\mu_1 \paren{\sderiv{\phi}{1}}^2+5 \sderiv{\phi}{2}\right) \, \sderiv{\gamma}{5} \\
                & \qquad
                  + \paren{\sderiv{\phi}{1}}^2 \left(\lambda_2 \paren{\sderiv{\phi}{1}}^4+4 \sderiv{\phi}{3} \sderiv{\phi}{1}+6 \paren{\sderiv{\phi}{2}}^2\right) \, \sderiv{\gamma}{4}, \\
                \sderiv{\tilde{\gamma}}{7}
                & = \paren{\sderiv{\phi}{1}}^3 \left(\mu_2 \paren{\sderiv{\phi}{1}}^4+6 \mu_1 \sderiv{\phi}{2} \paren{\sderiv{\phi}{1}}^2+5 \sderiv{\phi}{3} \sderiv{\phi}{1}+10 \paren{\sderiv{\phi}{2}}^2\right) \, \sderiv{\gamma}{5} \\
                & \qquad
                    + \sderiv{\phi}{1}
                      \biggl(
                          \lambda_3 \paren{\sderiv{\phi}{1}}^6+6 \lambda_2 \sderiv{\phi}{2} \paren{\sderiv{\phi}{1}}^4 \\
                  & \qquad \qquad \qquad
                          +4 \left(\paren{\sderiv{\phi}{2}}^3+3 \sderiv{\phi}{1} \sderiv{\phi}{3}\sderiv{\phi}{2}+\paren{\sderiv{\phi}{1}}^2 \sderiv{\phi}{4}\right)
                      \biggr)
                      \, \sderiv{\gamma}{4}.
              \end{align}
              Since $\sderiv{\phi}{1} \neq 0$,
              we have
              $\Span{\sderiv{\tilde{\gamma}}{4}, \sderiv{\tilde{\gamma}}{5}}
                = \Span{\sderiv{\gamma}{4}, \sderiv{\gamma}{5}}$.
              Also, since
              $\sderiv{\gamma}{i}
                = \lambda_{i - 4} \, \sderiv{\gamma}{4}
                  + \mu_{i - 5} \, \sderiv{\gamma}{5}$
              for $i = 6, 7$,
              \begin{align}
                \tilde{\lambda}_2
                & \coloneqq \lambda_2 \paren{\sderiv{\phi}{1}}^2-4 \mu_1 \sderiv{\phi}{2}+\frac{2 \left(2 \sderiv{\phi}{1} \sderiv{\phi}{3}-7 \paren{\sderiv{\phi}{2}}^2\right)}{\paren{\sderiv{\phi}{1}}^2}, \\
                \tilde{\lambda}_3
                & \coloneqq \lambda_3 \paren{\sderiv{\phi}{1}}^3+6 \lambda_2 \sderiv{\phi}{2} \sderiv{\phi}{1}+\frac{4 \sderiv{\phi}{4}}{\sderiv{\phi}{1}} \\
                & \qquad
                  -\frac{4 \sderiv{\phi}{2} \left(\mu_2 \paren{\sderiv{\phi}{1}}^4+6 \mu_1 \sderiv{\phi}{2} \paren{\sderiv{\phi}{1}}^2+2 \sderiv{\phi}{3} \sderiv{\phi}{1}+9 \paren{\sderiv{\phi}{2}}^2\right)}{\paren{\sderiv{\phi}{1}}^3}, \\
                \tilde{\mu}_1
                & \coloneqq \mu_1 \sderiv{\phi}{1}+\frac{5 \sderiv{\phi}{2}}{\sderiv{\phi}{1}}, \qquad
                \tilde{\mu}_2
                 \coloneqq \mu_2 \paren{\sderiv{\phi}{1}}^2+6 \mu_1 \sderiv{\phi}{2}+\frac{5 \left(2 \paren{\sderiv{\phi}{2}}^2+\sderiv{\phi}{1} \sderiv{\phi}{3}\right)}{\paren{\sderiv{\phi}{1}}^2}
              \end{align}
              satisfy
              $\sderiv{\tilde{\gamma}}{i}
                = \tilde{\lambda}_{i - 4} \, \sderiv{\tilde{\gamma}}{4}
                  + \tilde{\mu}_{i - 5} \, \sderiv{\tilde{\gamma}}{5}$
              for $i = 6, 7$.
              
              Moreover, by computation, we obtain
              \begin{equation}
                \tilde{\mu}_2 - \frac{5}{4} \, \tilde{\lambda}_2 - \frac{11}{10} \, \tilde{\mu}_1^2
                = \paren{\sderiv{\phi}{1}}^2 \,
                    \left( \mu_2 - \frac{5}{4} \, \lambda_2 - \frac{11}{10} \, \mu_1^2 \right),
              \end{equation}
              and hence, the value of
              $\sigma_{\paren{4, 5_{\pm 7}}} \in \cparen{0, \pm 1}$
              is invariant under parameter changes.

              Furthermore, by computing $\sderiv{\tilde{\gamma}}{i}$ ($i = 8, 9, 10, 11$) similarly,
              we obtain
              \begin{equation}
                \tilde{\V}_{\paren{4, 5, 11}}
                \in \paren{\sderiv{\phi}{1}}^{14} \, \V_{\paren{4, 5, 11}}
                  + \Span{\sderiv{\gamma}{4}, \sderiv{\gamma}{5}},
              \end{equation}
              where we denote by $\tilde{\V}_{\paren{4, 5, 11}}$ the value of $\V_{\paren{4, 5, 11}}$ for $\tilde{\gamma}$.
              Hence, the condition
              $\V_{\paren{4, 5, 11}}
                \notin \Span{\sderiv{\gamma}{4}, \sderiv{\gamma}{5}}$
              is invariant under parameter changes.
            
            (b) We take a coordinate transformation $\varPhi$
              and consider the curve $\hat{\gamma} \coloneqq \varPhi \circ \gamma$.
              By 
              \cref{cor:Bruno_for_curves} \ref{item:cor_Bruno_curve.coord},
              and \cref{rem:coord_transf.multi_m}
              with $m = 4$, we have
              $
                \sderiv{\hat{\gamma}}{4}
                = J_{\varPhi} \, \sderiv{\gamma}{4}$,
              $\sderiv{\hat{\gamma}}{5}
                = J_{\varPhi} \, \sderiv{\gamma}{5},
              $
              and
              $
                \sderiv{\hat{\gamma}}{i}
                = \lambda_{i - 4} \, \sderiv{\hat{\gamma}}{4}
                  + \mu_{i - 5} \, \sderiv{\hat{\gamma}}{5}
              $
              for $i = 6, 7$.
              Hence, the value of
              $\sigma_{\paren{4, 5_{\pm 7}}} \in \cparen{0, \pm 1}$
              is invariant under coordinate transformations.
              Moreover, since
              $\sderiv{x}{i}_j
                = \lambda_{i - 4} \, \sderiv{x}{4}_j
                  + \mu_{i - 5} \, \sderiv{x}{5}_j$
              for $i = 6, 7$ and $j = 1, \dots, N$,
              we get
              \begin{align}
                \sderiv{\hat{\gamma}}{8}
                & = J_{\varPhi} \, \sderiv{\gamma}{8}
                    + \frac{1}{2}
                      \sum_{i, j = 1}^{N}
                      \pfrac{^2 \varPhi}{x_i \, \partial x_j} \,
                      \sderiv{x}{4}_i \,
                      \sderiv{x}{4}_j,
                \qquad
                \sderiv{\hat{\gamma}}{9}
                = J_{\varPhi} \, \sderiv{\gamma}{9}
                    + \sum_{i, j = 1}^{N}
                        \pfrac{^2 \varPhi}{x_i \, \partial x_j} \,
                        \sderiv{x}{4}_i \,
                        \sderiv{x}{5}_j, \\
                \sderiv{\hat{\gamma}}{10}
                & = J_{\varPhi} \, \sderiv{\gamma}{10}
                   + \sum_{i, j = 1}^{N}
                      \pfrac{^2 \varPhi}{x_i \, \partial x_j} \,
                      \left(%
                        \lambda_2 \, \sderiv{x}{4}_i \, \sderiv{x}{4}_j
                        + \mu_1 \, \sderiv{x}{4}_i \, \sderiv{x}{5}_j
                        + \frac{1}{2} \sderiv{x}{5}_i \, \sderiv{x}{5}_j
                      \right), \\
                \sderiv{\hat{\gamma}}{11}
                & = J_{\varPhi} \, \sderiv{\gamma}{11}
                  + \sum_{i, j = 1}^{N}
                      \pfrac{^2 \varPhi}{x_i \, \partial x_j} \,
                      \left(%
                        \lambda_3 \, \sderiv{x}{4}_i \, \sderiv{x}{4}_j
                        + \paren{\lambda_2 + \mu_2} \, \sderiv{x}{4}_i \, \sderiv{x}{5}_j
                        + \mu_1 \, \sderiv{x}{5}_i \, \sderiv{x}{5}_j
                      \right),
              \end{align}
              where $\gamma (t) = (x_1 (t), \dots, x_N (t))$.
            Therefore,
            $
              \hat{\V}_{\paren{4, 5, 11}}
              = J_{\varPhi} \, \V_{\paren{4, 5, 11}},
            $
            where we denote by $\hat{\V}_{\paren{4, 5, 11}}$ the value of $\V_{\paren{4, 5, 11}}$ for $\hat{\gamma}$.
            Hence, the condition
            $\V_{\paren{4, 5, 11}}
              \notin \Span{\sderiv{\gamma}{4}, \sderiv{\gamma}{5}}$
            is invariant under coordinate transformations.
            Hence, this completes the proof of \cref{conj:criteria.4.5.and.other}.
    \end{proof}

    \section{Curvatures of Curves in $\Rpdf[N]$ of Finite Multiplicities}
    \label{sec:curvature}
    \subsection{Review of Curvatures of Regular Curves in \Rpdf[N]}
     In this subsection, we recall some facts about regular curves in $\R^N$.
     Let $I \subset \R$ be an open interval and 
     $\gamma(t) : I \to \R^N$ be a regular curve in $\R^N$ parametrized by an arbitrary parameter $t$.
     Throughout this subsection, we assume that $\gamma^{(1)}(t), ..., \gamma^{(N-1)}(t)$ are linearly independent for all $t \in I$. 

     Let $e_1, ... , e_{N-1} : I \to \R^N$ be the vector fields along $\gamma$ obtained by applying an orthonormalization to $\gamma^{(1)},..., \gamma^{(N-1)}$, i.e., $e_1 ,..., e_{N-1}$ are defined as follows: 
     \begin{align}
         E_1
          &\coloneqq \gamma^{(1)}, \qquad
         E_2
          \coloneqq \gamma^{(2)} 
           - \left(\gamma^{(2)} \cdot \frac{E_1}{|E_1|}\right) \frac{E_1}{|E_1|}, \\
         E_3 
          &\coloneqq \gamma^{(3)} 
           - \left(\gamma^{(3)} \cdot \frac{E_1}{|E_1|}\right) \frac{E_1}{|E_1|} 
           - \left(\gamma^{(3)} \cdot \frac{E_2}{|E_2|}\right)\frac{E_2}{|E_2|}, \\
         &\hspace{4cm} \vdots \\
         E_{N-1} 
          &\coloneqq \gamma^{(N-1)}
           - \left(\gamma^{(N-1)} \cdot \frac{E_1}{|E_1|}\right)\frac{E_1}{|E_1|} 
           - \cdots 
           - \left(\gamma^{(N-1)} \cdot \frac{E_{N-2}}{|E_{N-2}|}\right)\frac{E_{N-2}}{|E_{N-2}|}, \\
         e_i &\coloneqq \frac{E_i}{|E_i|} \quad (i = 1, ..., N-1).
     \end{align}
     Moreover, we define $e_N : I \to \R^N$ by
     \begin{equation}
         e_N \coloneqq e_1 \times \cdots \times e_{N-1},
     \end{equation}
     where $e_1 \times \cdots \times e_{N-1}$ denotes the cross product of $e_1, ...., e_{N-1} \in \R^N$.
     Let $s = s(t)$ be the arclength parameter of $\gamma$.
     Then, we know that there uniquely exist smooth functions $\kappa_1 (s), ..., \kappa_{N-2} (s) : I \to \poR$ and $\kappa_{N-1} (s) : I \to \R$ such that 
     \begin{equation}
         \label{eq:Frenet}
        \frac{d}{ds}
          (e_1, ..., e_{N})
        = (e_1, ..., e_{N})
         \begin{pmatrix}
            0 & -\kappa_{1} & 0 & \cdots & 0 \\
            \kappa_{1} & 0 & -\kappa_{2} & \ddots & \vdots \\
            0 &  \kappa_{2} & 0 & \ddots & 0 \\
            \vdots & \ddots & \ddots & \ddots & -\kappa_{N - 1} \\
            0 & \cdots & 0 & \kappa_{N - 1} & 0
          \end{pmatrix}
          .
    \end{equation}
        
    Let $V_i(t) : I \to \R$ ($i = 0, ..., N$) be smooth functions defined by
    \begin{equation}
    \label{eq:def.of.Vi}
        V_{i} \paren{t}
        \coloneqq
          \begin{cases}
            1 & (i = 0 ), \\
            \det \left(
                \trans{\bigl( \deriv{\gamma}{1} \paren{t}, \dots, \deriv{\gamma}{i} \paren{t} \bigr)}
                 \,
                  \bigl( \deriv{\gamma}{1} \paren{t}, \dots, \deriv{\gamma}{i} \paren{t} \bigr)
              \right)^{1/2} & (i \ne 0, N), \\
            \det \paren{\deriv{\gamma}{1} \paren{t}, \dots, \deriv{\gamma}{N} \paren{t}} & (i = N).
          \end{cases}
      \end{equation}
      We note that $V_i (t) > 0$ for each $i = 1, ..., N-1$ since $\gamma^{(1)}(t), ..., \gamma^{(N-1)}(t)$ are linearly independent.
      Gluck showed the following relation between $\kappa_1, \dots, \kappa_{N-1}$ and 
      $V_1, \dots, V_N$. 
      For a detailed proof, see \cite{Fukui_2017_multiplicities}.

    \begin{fact}[\!\!{\cite{Gluck_2024_higher_curvatures},\cite{Fukui_2017_multiplicities}}]
    \label{thm:Fukui.curv.}
        The functions $\kappa_{i}$ $(i = 1,... ,N - 1)$ 
        in \eqref{eq:Frenet} are given by
        \begin{equation}
            \kappa_i (s (t)) = \frac{V_{i - 1}(t) \, V_{i + 1}(t)}{V_{1}(t) \, {V_{i}(t)}^2}.
        \end{equation}
    \end{fact}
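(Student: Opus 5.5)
The plan is to derive the formula from the Frenet equations \eqref{eq:Frenet} by comparing two expansions of $\gamma^{(i)}$ in the frame $(e_1, \dots, e_N)$. Write $v = ds/dt = |\gamma^{(1)}| = V_1$.

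\textbf{Triangular structure.} By the Gram--Schmidt construction, $\gamma^{(i)} = \sum_{j \le i} c_{ij} e_j$ with $c_{ii} = |E_i| > 0$ for $i \le N-1$. The Gram matrix $\trans{(\gamma^{(1)}, \dots, \gamma^{(i)})}(\gamma^{(1)}, \dots, \gamma^{(i)})$ has determinant $\prod_{j \le i} c_{jj}^2$. Hence
\begin{equation}
  V_i = \prod_{j \le i} c_{jj}, \qquad c_{ii} = V_i / V_{i-1}
\end{equation}
for $i \le N-1$.

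For $i = N$ we still have $\gamma^{(N)} = \sum_{j \le N} c_{Nj} e_j$. Then
\begin{equation}
  V_N = \det(\gamma^{(1)}, \dots, \gamma^{(N)}) = \Bigl(\prod_{j \le N} c_{jj}\Bigr) \det(e_1, \dots, e_N) = \prod_{j \le N} c_{jj},
\end{equation}
because $e_N = e_1 \times \cdots \times e_{N-1}$ makes the frame positively oriented. Thus $c_{NN} = V_N / V_{N-1}$ as well, and $c_{NN}$ may be signed.

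\textbf{Recursion for the diagonal.} Differentiate $\gamma^{(i)} = \sum_{j \le i} c_{ij} e_j$ in $t$, using $d e_j/dt = v(\kappa_{j-1} e_{j-1} \cdot(-1)\cdot\ldots)$ as dictated by \eqref{eq:Frenet}. In column form this reads $de_j/dt = v(-\kappa_{j-1} e_{j-1} + \kappa_j e_{j+1})$. The only contribution to the $e_{i+1}$-component of $\gamma^{(i+1)}$ comes from $c_{ii}\, de_i/dt$. Therefore
\begin{equation}
  c_{i+1,i+1} = v\, \kappa_i\, c_{ii}, \qquad i = 1, \dots, N-1.
\end{equation}

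\textbf{Conclusion.} Combining the two steps gives
\begin{equation}
  \kappa_i = \frac{c_{i+1,i+1}}{v\, c_{ii}} = \frac{V_{i+1} / V_i}{V_1 \cdot V_i / V_{i-1}} = \frac{V_{i-1} V_{i+1}}{V_1 V_i^2}.
\end{equation}
This is the claimed identity, with $V_0 = 1$ and $c_{11} = V_1 = v$.

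\textbf{Main obstacle.} The only delicate point is the index $N$: one must check the orientation and sign conventions so that $\det(e_1, \dots, e_N) = 1$ and $\kappa_{N-1}$ carries the sign of $V_N$. The other point to verify is that the Frenet matrix convention (action on the right of the row of frame vectors) gives the sign $+\kappa_i$ in the $e_{i+1}$-coefficient of $de_i/ds$. I would check both explicitly, for instance against a helix in $\R^3$.
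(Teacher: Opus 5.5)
Your argument is correct and is the standard proof behind the cited result; the paper gives no proof of its own and defers to Gluck and Fukui. The Gram determinant gives $c_{ii}=V_i/V_{i-1}$, and the Frenet equations give $c_{i+1,i+1}=V_1\kappa_i c_{ii}$, which is Gluck's observation that $\gamma^{(i)}\equiv V_1^{\,i}\kappa_1\cdots\kappa_{i-1}\,e_i$ modulo $\Span{e_1,\dots,e_{i-1}}$. The one convention you flag, $\det(e_1,\dots,e_N)=1$ for $e_N=e_1\times\cdots\times e_{N-1}$, is exactly what makes the formula hold with $V_N$ as a signed determinant, so nothing is missing.
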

    
     By \cref{thm:Fukui.curv.} and direct calculation, the invariance of $\kappa_i$ under parameter changes and isometries of $\R^N$ is summarized in \cref{tab:transf.rule.curv.fcn.}.
     For each $i = 1, ..., N-1$, the function $\kappa_i$ is called the \emph{$i$-th curvature function} of $\gamma$.

     \begin{table}[htbp]
        \centering
        \begin{tabular}{|l||r|r|} \hline
        & \makecell{$\kappa_{1}, ..., \kappa_{N-2}$} & \makecell{$\kappa_{N-1}$} \\ \hhline{|=||=|=|}
      \makecell{orientation-preserving \\ parameter changes} & \makecell{unchanging} & \makecell{unchanging} \\ \hline
      \makecell{orientation-reversing \\ parameter changes} & \makecell{unchanging} & \makecell{$\times (-1)^{{\frac{N (N + 1)}{2}}}$} \\ \hline
      \makecell{orientation-preserving \\ isometries of $\R^N$} & \makecell{unchanging} & \makecell{unchanging} \\ \hline
      \makecell{orientation-reversing \\ isometries of $\R^N$} & \makecell{unchanging} & \makecell{$\times (-1)$} \\ \hline
     \end{tabular}
        \caption{Transformation rules for $\kappa_{i}$}
        \label{tab:transf.rule.curv.fcn.}
    \end{table}
    
    The following fact is known as the fundamental theorem of regular curves in $\R^N$.
    
    \begin{fact}[see~\rcite{Theorem~8}{Sulanke_2020_funda_thm_curve}]
    \label{fact:fundamental.reg.curve}
        Let $N \in \geZ{2}$ and let $I \subset \R$ be an open interval. If $\kappa_1, ..., \kappa_{N-2} : I \to \poR$ and $\kappa_{N-1} : I \to \R$ are smooth functions, then there exists a regular curve $\gamma : I \to \R^N$ satisfying the following conditions:
        \begin{enumerate}[label={(\arabic*)}]
            \item 
            the curve $\gamma$ is parametrized by arclength.
            \item
            the vectors $\gamma^{(1)}, ..., \gamma^{(N-1)}$ are linearly independent on $I$.
            \item 
            the functions $\kappa_i$ coincides with $i$-th curvature function of $\gamma$ $(i = 1, ..., N-1)$.
        \end{enumerate}
    Moreover, such a regular curve is unique up to an orientation-preserving isometry of $\R^N$. 
    \end{fact}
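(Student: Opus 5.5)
The plan is to reduce everything to a linear ODE for the frame. Write $K(s)$ for the skew-symmetric tridiagonal matrix in \eqref{eq:Frenet}, built from the given functions $\kappa_1, \dots, \kappa_{N-1}$. Fix $s_0 \in I$ and solve the linear system $F'(s) = F(s)\, K(s)$ on $I$ with $F(s_0) = $ identity, for the matrix-valued function $F = (e_1, \dots, e_N)$. Linearity gives a unique smooth solution on all of $I$. Since $K^{\mathsf{T}} = -K$, we get $(F^{\mathsf{T}} F)' = K^{\mathsf{T}} F^{\mathsf{T}} F + F^{\mathsf{T}} F K$. Hence $F^{\mathsf{T}}F \equiv$ identity by uniqueness of solutions to this linear equation, and by continuity $\det F \equiv 1$, so $F(s) \in SO(N)$. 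Then set $\gamma(s) \coloneqq \int_{s_0}^{s} e_1(u)\,du$. This gives $\gamma' = e_1$, a unit vector, so $\gamma$ is regular and parametrized by arclength, which is property (1).

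Next, I will prove by induction on $k$, using $e_j' = \kappa_{j-1} e_{j-1} - \kappa_j e_{j+1}$, that for $1 \le k \le N$
\begin{equation}
  \gamma^{(k)} \in \kappa_1 \cdots \kappa_{k-1}\, e_k + \Span{e_1, \dots, e_{k-1}}.
\end{equation}
Since $\kappa_1, \dots, \kappa_{N-2} > 0$, the leading coefficients for $k \le N-1$ are positive. This gives linear independence of $\gamma^{(1)}, \dots, \gamma^{(N-1)}$, which is property (2). It also shows that Gram--Schmidt applied to $\gamma^{(1)}, \dots, \gamma^{(N-1)}$ returns exactly $e_1, \dots, e_{N-1}$. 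The triangular structure with positive diagonal is what makes the orthonormalization reproduce these vectors with the correct signs. Because $F \in SO(N)$, the vector $e_N$ coincides with the cross product $e_1 \times \cdots \times e_{N-1}$. So the Frenet frame of $\gamma$ is $F$, and its derivative matrix is $F^{\mathsf{T}} F' = K$. The entries of this matrix are $e_j' \cdot e_i$, which are uniquely determined by the frame, so the curvature functions of $\gamma$ are the given $\kappa_i$. This is property (3).

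For uniqueness, let $\gamma$ and $\hat\gamma$ both satisfy (1)--(3), with Frenet frames $F$ and $\hat F$. Both frames take values in $SO(N)$, by the cross-product convention for $e_N$. Put $A \coloneqq \hat F(s_0) F(s_0)^{-1} \in SO(N)$ and $b \coloneqq \hat\gamma(s_0) - A\gamma(s_0)$. The frame of $A\gamma + b$ is $AF$, using that rotations commute with the cross-product construction. Both $AF$ and $\hat F$ solve $X' = XK$ with the same value at $s_0$, so they coincide on $I$. Integrating the first columns then gives $\hat\gamma = A\gamma + b$.

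The main obstacle is the identification step in the second paragraph: checking that the frame produced by the ODE is literally the Gram--Schmidt/cross-product frame used to define the curvature functions. The delicate points are the positivity of the leading coefficients, which rests on $\kappa_i > 0$ for $i \le N-2$, and the sign of $e_N$, which controls $\kappa_{N-1}$. I expect to settle both by the explicit triangular expansion above, together with $\det F = 1$.
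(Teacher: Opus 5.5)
The paper gives no proof of its own here: the statement is quoted as a Fact from \cite[Theorem~8]{Sulanke_2020_funda_thm_curve}. Your argument is the standard proof of that theorem, and it is correct apart from one slip. You solve the Frenet system $F' = FK$ as a linear ODE, show $F^{\mathsf{T}}F \equiv I$ and $\det F \equiv 1$, and integrate $e_1$. You then identify $F$ with the Gram--Schmidt/cross-product frame through the triangular expansion of $\gamma^{(k)}$. Uniqueness comes from ODE uniqueness after matching the frames at $s_0$.

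The slip is the sign in the Frenet equations. With $F' = FK$ and $K$ as in \eqref{eq:Frenet}, the $j$-th column reads $e_j' = -\kappa_{j-1} e_{j-1} + \kappa_j e_{j+1}$, not $\kappa_{j-1} e_{j-1} - \kappa_j e_{j+1}$ as you wrote. Your displayed inclusion $\gamma^{(k)} \in \kappa_1 \cdots \kappa_{k-1} e_k + \operatorname{span}(e_1, \dots, e_{k-1})$ holds only with the correct sign. With the sign you stated, the leading coefficients would be $(-1)^{k-1}\kappa_1\cdots\kappa_{k-1}$, and Gram--Schmidt would return $(-1)^{k-1} e_k$. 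That is exactly the delicate point you singled out, so fix the formula.

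One further remark on the step ``$F \in SO(N)$ implies $e_N = e_1 \times \cdots \times e_{N-1}$''. It presupposes that the cross product completes an orthonormal $(N-1)$-frame to a positively oriented basis. This is the convention compatible with \cref{thm:Fukui.curv.}, where $V_N$ is a signed determinant, so it is the right one here.
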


    \subsection{Curvatures of Curves with a Singularity of Finite Multiplicity} 

    In this subsection, we define the curvatures of curves in $\R^N$ with 
    finite multiplicity and state their geometric properties.
    
    Let $\epsilon > 0$, and let 
    $\gamma(t) \colon (-\epsilon, \epsilon) \to \R^N$ be of multiplicity $m$ at $t = 0$. 
    \begin{rem}
        \label{rem:assumption(A)(B)}
        Throughout this subsection, we assume
        that
        \begin{enumerate}[label={(\Alph*)}] 
            \item \label{item:assumption_A}
              $\gamma |_{(-\epsilon,0) \cup (0,\epsilon)}$ is regular, and
            \item \label{item:assumption_B}
              $\gamma^{(1)}(t), ..., \gamma^{(N-1)}(t)$ are linearly independent for all $t \in (-\epsilon,0) \cup (0,\epsilon)$.
        \end{enumerate}
    \end{rem}
    The assumption \ref{item:assumption_A} implies that there exists a smooth map 
    $\bm{T} : (-\epsilon, \epsilon) \to \R^N \setminus \{\bzero\}$ such that $\gamma'(t) = t^{m-1} \, \bm{T}(t)$.

    \begin{defn}
        We call the regular curve $\hat{\gamma}(t) : (-\epsilon,\epsilon) \to \R^N$ defined by 
        \begin{equation}
          \hat{\gamma}(t) \coloneqq \int_0^t\bm{T}(v) \, dv
        \end{equation}
        the \emph{associated regular curve} of $\gamma(t)$.
    \end{defn}
            
    Fukui \cite{Fukui_2017_multiplicities} showed the existence of a parameter with good properties for a curve with a singularity of finite multiplicity.
    \begin{fact}[\nscite{Fukui_2017_multiplicities}]
        \label{fact:existence.of.1/m.arclength}
        There exists an orientation-preserving parameter change $u = u(t)$ mapping $0$ to $0$ such that
        \begin{equation}
          \left|\frac{d\gamma}{du}\right| = |u|^{m-1}.
        \end{equation}
    \end{fact}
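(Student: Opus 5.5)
The plan is to construct $u$ explicitly from the associated regular curve, by making the quantity $\sgn(u)\,\abs{u}^m/m$ coincide with a suitable primitive of $\abs{\gamma'}$. By assumption~\ref{item:assumption_A} and the multiplicity, we have $\gamma'(t) = t^{m-1}\,\bm{T}(t)$ with $\bm{T}$ smooth and nowhere zero. Hence $\abs{\bm{T}}$ is a smooth positive function on $(-\epsilon,\epsilon)$.

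For an orientation-preserving parameter change $u=u(t)$ we have $\abs{d\gamma/du} = \abs{t}^{m-1}\abs{\bm{T}(t)}/u'(t)$ away from $0$. So the required identity is equivalent to the ODE
\begin{equation}
  \abs{u}^{m-1}\,u'(t) = \abs{t}^{m-1}\,\abs{\bm{T}(t)}.
\end{equation}
The left-hand side is $\frac{d}{dt}\bigl(\sgn(u)\abs{u}^m/m\bigr)$. We therefore set $F(t) \coloneqq \int_0^t \abs{v}^{m-1}\abs{\bm{T}(v)}\,dv$ and look for $u$ with $\sgn(u)\abs{u}^m/m = F$.

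The key step is to extract an $m$-th root smoothly. Substituting $v = ts$ gives, for both signs of $t$,
\begin{equation}
  F(t) = \frac{\sgn(t)\,\abs{t}^m}{m}\, g(t),
  \qquad
  g(t) \coloneqq m\int_0^1 s^{m-1}\,\abs{\bm{T}(ts)}\,ds .
\end{equation}
Here $g$ is smooth by differentiation under the integral and strictly positive. Define
\begin{equation}
  u(t) \coloneqq t\, g(t)^{1/m}.
\end{equation}
This function is smooth, satisfies $u(0)=0$, and has $u'(0) = g(0)^{1/m} = \abs{\bm{T}(0)}^{1/m} > 0$. So, after shrinking $\epsilon$ if necessary, it is an orientation-preserving diffeomorphism germ. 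Since $g>0$, the sign of $u$ equals the sign of $t$, and so $\sgn(u)\abs{u}^m/m = \sgn(t)\abs{t}^m g(t)/m = F(t)$.

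Differentiating this identity gives the ODE above, hence $\abs{d\gamma/du} = \abs{u}^{m-1}$ for $t\neq 0$. The identity extends to $t=0$ by continuity, both sides being continuous with value $0$ there when $m\ge 2$, and trivially when $m=1$. The only delicate point is the smoothness of the $m$-th root. It is handled precisely by factoring out $t$ via the substitution $v=ts$, which leaves a positive smooth factor $g$, instead of taking a root of $F$ directly.
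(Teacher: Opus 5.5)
Your proof is correct. The paper does not prove this fact itself (it is quoted from Fukui), but your construction is the one implicit in the paper: your $F$ is the signed arclength $s$, your substitution $v = ts$ is, up to rescaling, the substitution in \cref{fact:division.lem.general.order} (applied to $\abs{\hat{\gamma}'} = \abs{\bm{T}}$ in the proof of \cref{prop:m.nom.curv.and.dual.curv.}), and your $g$ is $m$ times the $g$ there. Consequently your $u = t\,g(t)^{1/m}$ equals $m^{1/m}\tau$, where $\tau$ is the $1/m$-arclength parameter with $\abs{\tau} = \abs{s(\tau)}^{1/m}$ that the paper derives from this fact.
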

    
    \cref{fact:existence.of.1/m.arclength} implies the existence of the parameter $\tau$ satisfying 
    \begin{equation}
    \label{eq:1_m_arclength_para}
      \left|\frac{d\gamma}{d\tau}\right| = m|\tau|^{m-1}.
    \end{equation}
    This parameter $\tau$ satisfies $|\tau| = |s(\tau)|^{1/m}$ by using the signed arclength function 
        \begin{equation} \label{eq:singed_arclengrth}
            s(t) \coloneqq \int_0^t |\gamma'(v)| \, dv.   
        \end{equation} 
    Roughly speaking, $\tau$ is the $m$-th root of the arclength.
    The parameter in \eqref{eq:1_m_arclength_para} is called 
    the \emph{$1/m$-arclength parameter} of $\gamma$
    \cite{Martins_Saji_Santos_Teramoto_2024_bdd_geom_inv}.
    In the sequel,  
    we denote the $1/m$-arclength parameter by $\tau$ and a general parameter by $t$.
    We note that $\tau$ is the $1/m$-arclength parameter of $\gamma$ if and only if
        \begin{equation}
            \left|\frac{d\,\hat{\gamma}}{d\tau}\right|=m
        \end{equation}
    holds for the associated regular curve $\hat{\gamma}(\tau)$.
    
    The assumptions \ref{item:assumption_A} and \ref{item:assumption_B} ensure the existence of the curvature functions $\kappa_1, \dots, \kappa_{N-1} \colon (-\epsilon,0) \cup (0,\epsilon) \to \R$ of the regular curve $\gamma|_{(-\epsilon,0) \cup (0,\epsilon)}$. 

    \begin{defn}
        \label{defn:m.nom.curv.fcn}
        For each $i = 1, \dots, N -1$, we define the \emph{$i$-th $m$-normalized curvature functions} $\mu_{m, i} \colon (-\epsilon, 0) \cup (0, \epsilon) \to \R$ of $\gamma$ by
        \begin{equation}
            \mu_{m,i}(t) \coloneqq
            \begin{cases}
                \kappa_{i} \paren{t} \, \abs{s \paren{t}}^{\frac{m - 1}{m}} & (i = 1, ..., N-2), \\
                \paren{\sgn t}^{N \paren{m - 1}} \, \kappa_{N - 1} \paren{t} \, \abs{s \paren{t}}^{\frac{m - 1}{m}} & (i = N-1),
            \end{cases}
        \end{equation}
        where $s(t)$ is the signed arclength function in \eqref{eq:singed_arclengrth}.
    \end{defn}
    \noindent
    By direct calculation, the invariance of the $m$-normalized curvature functions under parameter changes and isometries of $\R^N$ is summarized in \cref{tab:transf.rule.kappa}.

     \begin{table}[htbp]
        \centering
        \begin{tabular}{|l||r|r|} \hline
        & \makecell{$\mu_{m,1}, ..., \mu_{m,N-2}$} & \makecell{$\mu_{m,N-1}$} \\ \hhline{|=||=|=|}
      \makecell{orientation-preserving \\ parameter changes} & \makecell{unchanging} & \makecell{unchanging} \\ \hline
      \makecell{orientation-reversing \\ parameter changes} & \makecell{unchanging} & \makecell{$\times (-1)^{{\frac{N (2 m + N - 1)}{2}}}$} \\ \hline
      \makecell{orientation-preserving \\ isometries of $\R^N$} & \makecell{unchanging} & \makecell{unchanging} \\ \hline
      \makecell{orientation-reversing \\ isometries of $\R^N$} & \makecell{unchanging} & \makecell{$\times (-1)$} \\ \hline
     \end{tabular}
        \caption{Transformation rules for $\mu_{m,i}$}
        \label{tab:transf.rule.kappa}
    \end{table}
    
    In view of the assumption \ref{item:assumption_B} in \cref{rem:assumption(A)(B)}, since 
        \begin{equation}
        \label{eq:transf.gamma.to.hatgamma}
             \left(\gamma^{(1)}(t), ..., \gamma^{(N-1)}(t)\right) 
            = \left(\hat{\gamma}^{(1)}(t), ..., \hat{\gamma}^{(N-1)}(t)\right) 
            \begin{pmatrix}
                t^{m-1} & \cdots & * \\
                    & \ddots & \vdots \\
                  0 &      & t^{m-1}    
            \end{pmatrix},
        \end{equation}
    we have that 
    $\hat{\gamma}^{(1)}(t), ..., \hat{\gamma}^{(N-1)}(t)$ 
    are linearly independent for all $t \in (-\epsilon,0) \cup (0,\epsilon)$.
    Hence, the curvature functions $\hat{\kappa}_1, \dots, \hat{\kappa}_{N-1}$ of the associated regular curve $\hat{\gamma}$ are defined on $(-\epsilon,0) \cup (0,\epsilon)$.
    The $m$-normalized curvature functions can be regarded as the curvature functions of the associated regular curve in the following sense:
    \begin{prop}
    \label{prop:m.nom.curv.and.dual.curv.}
        Let $\hat{\kappa}_{i}(t)$ $(i = 1, ..., N-1)$ be the 
        curvature functions of the associated regular curve $\hat{\gamma}(t)$. 
        Then, the smooth function $g \colon (-\epsilon, \epsilon) \to \R_{>0}$ given by
        \begin{equation} \label{eq:g}
        g(t) \coloneqq 
        \int_0^{\epsilon / 2} \left(\frac{2}{\epsilon}\right)^{m} v^{m-1} \, \left|\hat{\gamma}'\left(\frac{2tv}{\epsilon}\right)\right| \, dv
        \end{equation}
        satisfies  
        \begin{equation}
            g(0) = \frac{|\hat{\gamma}'(0)|}{m} \quad \text{and} \quad
            \mu_{m,i}(t) = \hat{\kappa}_{i}(t) \, g(t)^{\frac{m-1}{m}} 
            \quad (i = 1, ..., N-1)
        \end{equation}
        for all $t \ne 0$. Furthermore, if $\gamma$ is parameterized by the $1/m$-arclength parameter $\tau$, then
        \begin{equation}
        \label{eq:m.nom.curv.fcn.and.dual.curv.}
            \mu_{m,i}(\tau) = \hat{\kappa}_{i}(\tau) \quad
        (i = 1, ..., N-1)
        \end{equation}
        hold for all $\tau \ne 0$.
    \end{prop}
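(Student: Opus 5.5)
The plan is to compare the curvature formula of \cref{thm:Fukui.curv.} for $\gamma$ and for its associated regular curve $\hat{\gamma}$, working on $t \neq 0$. Since $\gamma' = t^{m-1}\hat{\gamma}'$, by \cref{eq:transf.gamma.to.hatgamma} the frame $(\gamma^{(1)},\dots,\gamma^{(i)})$ equals $(\hat{\gamma}^{(1)},\dots,\hat{\gamma}^{(i)})$ times an upper triangular matrix with diagonal $t^{m-1}$. Hence $V_i = |t|^{i(m-1)}\hat{V}_i$ for $1\le i\le N-1$ and $V_N = t^{N(m-1)}\hat{V}_N$. This uses the Gram determinant for $i<N$ and the plain determinant for $i=N$.

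Substituting into $\kappa_i = V_{i-1}V_{i+1}/(V_1V_i^2)$, the powers of $|t|$ give exponent $(i-1)+(i+1)-1-2i$ times $(m-1)$, that is, $-(m-1)$. Thus $\kappa_i = |t|^{-(m-1)}\hat{\kappa}_i$ for $i\le N-2$. For $i=N-1$ the factor $V_N$ contributes $t^{N(m-1)}$ rather than $|t|^{N(m-1)}$, so $\kappa_{N-1} = (\sgn t)^{N(m-1)}|t|^{-(m-1)}\hat{\kappa}_{N-1}$. The sign factor in \cref{defn:m.nom.curv.fcn} is designed exactly to cancel this, since $(\sgn t)^{2N(m-1)}=1$. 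Consequently $\mu_{m,i}(t) = \hat{\kappa}_i(t)\,\bigl(|s(t)|/|t|^m\bigr)^{(m-1)/m}$ for every $i$.

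It remains to identify $|s(t)|/|t|^m$ with $g(t)$. We have $s(t)=\int_0^t |v|^{m-1}|\hat{\gamma}'(v)|\,dv$. Substituting $v = 2tw/\epsilon$ with $w\in[0,\epsilon/2]$ gives
\begin{equation}
|s(t)| = |t|^m\int_0^{\epsilon/2}\left(\frac{2}{\epsilon}\right)^m w^{m-1}\left|\hat{\gamma}'\left(\frac{2tw}{\epsilon}\right)\right| dw = |t|^m g(t).
\end{equation}
Here one must check signs for $t<0$: $s(t)$ and $t^m\cdot(\text{positive})$ have the appropriate signs, and taking absolute values removes any discrepancy. The function $g$ is smooth by differentiation under the integral, since $\hat{\gamma}'$ is smooth and nonvanishing on $(-\epsilon,\epsilon)$ and $2tw/\epsilon$ stays in that interval. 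It is positive because $\hat{\gamma}'\neq\bzero$. Finally, $g(0)=|\hat{\gamma}'(0)|(2/\epsilon)^m(\epsilon/2)^m/m = |\hat{\gamma}'(0)|/m$.

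For the last claim, if $\tau$ is the $1/m$-arclength parameter then $|d\hat{\gamma}/d\tau| = m$ identically, so $g\equiv 1$ and \cref{eq:m.nom.curv.fcn.and.dual.curv.} follows. The main obstacle I expect is the sign bookkeeping in $V_N$ and $\kappa_{N-1}$: confirming that the determinant of the triangular factor is exactly $t^{N(m-1)}$, and that \cref{thm:Fukui.curv.} applies verbatim to $\gamma$ on each half-interval with $s$ the signed arclength. Everything else is routine.
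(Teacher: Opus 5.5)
Your proposal is correct and follows the paper's proof. You obtain $\kappa_i=|t|^{-(m-1)}\hat{\kappa}_i$ (with the factor $(\sgn t)^{N(m-1)}$ for $i=N-1$) from \cref{thm:Fukui.curv.} and \cref{eq:transf.gamma.to.hatgamma}, establish $|s(t)|=|t|^m g(t)$, and use $g\equiv 1$ in the $1/m$-arclength case; the only cosmetic difference is that you do the substitution $v=2tw/\epsilon$ by hand, whereas the paper cites \cref{fact:division.lem.general.order}, whose proof is exactly that substitution.
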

    \noindent 
    To prove \cref{prop:m.nom.curv.and.dual.curv.}, we use the following lemma.
    \begin{lem}[\rcite{Lemma A.4}{Saji_Umehara_Yamada_2022_singularities}]
    \label{fact:division.lem.general.order}
    Let $\phi \colon (-\epsilon, \epsilon) \to \R$ be a smooth function and let $\alpha \ge 0$. Then, there exists a smooth function $g \colon (-\epsilon,\epsilon) \to \R$ satisfying
    \begin{equation}
         g(0) = \frac{\phi(0)}{\alpha + 1} \quad \text{and} \quad
         \int_0^t|v|^{\alpha} \, \phi(v) \, dv = \paren{\sgn t} \, |t|^{\alpha + 1} \, g(t).
    \end{equation}
    \end{lem}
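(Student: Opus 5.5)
The lemma is the statement "Lemma A.4" (fact:division.lem.general.order): given smooth $\phi$ on $(-\epsilon,\epsilon)$ and $\alpha\ge 0$, find smooth $g$ with $g(0)=\phi(0)/(\alpha+1)$ and $\int_0^t |v|^\alpha\phi(v)\,dv = (\sgn t)|t|^{\alpha+1}g(t)$. The plan is to rescale the integration variable, which makes $g$ explicit as a parameter-dependent integral over a fixed interval.

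\textbf{Construction.} Substitute $v = t w$ with $w\in[0,1]$ in $\int_0^t |v|^\alpha \phi(v)\,dv$. Then $dv = t\,dw$ and $|v|^\alpha = |t|^\alpha w^\alpha$, so
\begin{equation}
  \int_0^t |v|^{\alpha}\phi(v)\,dv
  = t\,|t|^{\alpha}\int_0^1 w^{\alpha}\,\phi(tw)\,dw
  = (\sgn t)\,|t|^{\alpha+1}\int_0^1 w^{\alpha}\,\phi(tw)\,dw .
\end{equation}
This identity holds for every $t\in(-\epsilon,\epsilon)$. For $t<0$ the orientation of the integral is absorbed by the factor $t$. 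For $t=0$ both sides vanish. Accordingly, define
\begin{equation}
  g(t)\coloneqq \int_0^1 w^{\alpha}\,\phi(tw)\,dw .
\end{equation}

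\textbf{Smoothness.} The integrand $w^\alpha \phi(tw)$ is continuous on $[0,1]\times(-\epsilon,\epsilon)$, since $w^\alpha$ is continuous and bounded on $[0,1]$ for $\alpha\ge 0$. For every $k$, its $t$-derivative $w^{\alpha+k}\phi^{(k)}(tw)$ is also continuous there. On any compact subinterval $[-\epsilon',\epsilon']$ these derivatives are bounded uniformly in $w$, so differentiation under the integral sign applies to all orders. Hence $g$ is $C^\infty$, with $g^{(k)}(t)=\int_0^1 w^{\alpha+k}\phi^{(k)}(tw)\,dw$.

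\textbf{Value at the origin.} Evaluating at $t=0$ gives $g(0)=\phi(0)\int_0^1 w^\alpha\,dw=\phi(0)/(\alpha+1)$, which is the required value.

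\textbf{Main obstacle.} The only delicate point is justifying differentiation under the integral when $\alpha$ is non-integer and $w^\alpha$ is not smooth at $w=0$. This causes no difficulty, because differentiation is in $t$, not $w$, and $w^\alpha$ enters only as a bounded continuous weight. This is also where the substitution pays off: $g$ is not defined by dividing by $|t|^{\alpha+1}$, a division that would be hard to control at $t=0$.
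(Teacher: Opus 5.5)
Your proof is correct and is essentially the paper's argument. The paper defines $g(t)=\int_0^{\epsilon/2}(2/\epsilon)^{\alpha+1}v^{\alpha}\phi(2tv/\epsilon)\,dv$, which becomes your $g(t)=\int_0^1 w^{\alpha}\phi(tw)\,dw$ under $w=2v/\epsilon$, and it checks the identity and $g(0)$ by the same change of variables; your differentiation under the integral sign also supplies the smoothness of $g$, which the paper leaves implicit.
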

    
    \begin{proof}
        If we set 
        \begin{equation}
            g(t) \coloneqq \int_0^{\epsilon / 2} 
            \biggr(\frac{2}{\epsilon}\biggl)^{\alpha + 1} \, v^{\alpha} \, \phi \biggl (\frac{2tv}{\epsilon} \biggr) \, dv,
        \end{equation}
        then the following calculations give the conclusion.
        \begin{align}
            \int_0^t|v|^{\alpha} \, \phi(v) \, dv 
            &= \int_0^{\epsilon / 2} \frac{2t}{\epsilon} \left|\frac{2tv}{\epsilon}\right|^{\alpha} \, \phi\biggl(\frac{2tv}{\epsilon}\biggr) \, dv 
            = \paren{\sgn t} \, |t|^{\alpha + 1} \, g(t),\\
            g(0)
            &= \left(\frac{2}{\epsilon}\right)^{\alpha + 1} \, \phi(0) \int_0^{\epsilon/2} v^{\alpha} \, dv = \frac{\phi(0)}{\alpha + 1}.
        \end{align}
    \end{proof}

    \begin{proof}[Proof of \cref{prop:m.nom.curv.and.dual.curv.}]
        Let $\kappa_1(t), ..., \kappa_{N-1}(t)$ ($t \ne 0$) be the curvature functions 
         of $\gamma$. Then \cref{thm:Fukui.curv.}, \cref{eq:def.of.Vi,eq:transf.gamma.to.hatgamma} give
        \begin{equation}
            \kappa_{i}(t) = 
            \begin{cases}
                \dfrac{\hat{\kappa}_i(t)}{|t|^{m-1}} & (i = 1, ..., N-2), \\
                \paren{\sgn t}^{\vphantom{\big(}N(m-1)} \, 
                \dfrac{\hat{\kappa}_{N-1}(t)}{|t|^{m-1}} & (i = N-1).
            \end{cases}
        \end{equation}
        By applying \cref{fact:division.lem.general.order} to $|\hat{\gamma}'| \colon (-\epsilon, \epsilon) \to \R_{>0}$, the smooth function $g \colon (-\epsilon, \epsilon) \to \R_{>0}$ in \cref{eq:g} satisfies
        $g(0) = |\hat{\gamma}'(0)|/m$ and 
        $s(t) = \paren{\sgn t} \, |t|^m \, g(t)$.
        \cref{defn:m.nom.curv.fcn} gives the conclusion. Moreover, if $\gamma$ is parameterized by the $1/m$-arclength parameter $\tau$, then $|d\hat{\gamma}/d\tau| = m$, i.e., $g(\tau) = 1$. 
        Thus we get \cref{eq:m.nom.curv.fcn.and.dual.curv.}.
    \end{proof}

    \begin{rem} \label{rem:week.conditions.for.smoothness}
        Fix $i = 1, \dots, N-1$, and suppose that $\hat{\kappa}_i$ and $\mu_{m,i}$ extend to $t = 0$. 
        Then, by \cref{prop:m.nom.curv.and.dual.curv.}, $\hat{\kappa}_i$ is smooth at $t = 0$ if and only if $\mu_{m,i}$ is smooth at $t = 0$. 
        Combining this with
        \begin{equation} \label{eq:comparison.at.0}
            \hat{\gamma}^{\paren{i}}(0) 
            = \frac{(i - 1)!}{(m + i - 2)!} \, \gamma^{\paren{m + i - 1}}(0),
        \end{equation}
        which is obtained by repeatedly differentiating both sides of $\gamma^{\prime}(t) = t^{m-1} \, \hat{\gamma}^{\prime}(t)$, 
        we deduce that if $\gamma^{(m)}(0), \dots, \gamma^{(m+N-2)}(0)$ are linearly independent,
        then $\mu_{m, i}$ is smooth at $t = 0$.
        
        In particular, for $N = 2$, 
        the first $m$-normalized curvature function 
        $\mu_{m, 1}$ is always smooth at $t = 0$. 
        Specifically, it coincides with the normalized curvature function defined in 
        \cite{Shiba_Umehara_2012_curvature_functions} for $m = 2$, and with its generalization to plane curves with singularities of finite multiplicity introduced in 
        \cite{Martins_Saji_Santos_Teramoto_2024_bdd_geom_inv}
        for $m \ge 2$. 
        Furthermore, $\mu_{m, 1}$ is related to the smooth curvature 
        function $\kappa$ given by Fukui and Hoshino 
        \cite{Fukui_Hoshino_curvature_criteria} for such curves by the equation 
        $\mu_{m, 1}(\tau) = (m!)^{1 / m} \, \kappa ((m!)^{1 / m} \, \tau) / m$.

        If $\gamma$ is analytic, the smoothness of $\mu_{m,i}$ at $t = 0$ follows more straightforwardly by observing how it extends to $t = 0$ (see \ref{item:smoothness.of.norm.curv.} in \cref{thm:smoothness.of.nom.curv.fcn}).
    \end{rem}
    
    Let $v_{m,i} \in \R$ ($i = 0, ..., N$) be constants defined by
    \begin{equation}
        v_{m,i}
        \coloneqq
          \begin{cases}
            1 & (i = 0), \\
            \det \left( 
                {}^t
                  \bigl( \deriv{\gamma}{m} \paren{0}, ..., \deriv{\gamma}{m+i-1} \paren{0} \bigr)
                 \,
                  \bigl( \deriv{\gamma}{m} \paren{0}, ..., \deriv{\gamma}{m+i-1} \paren{0} \bigr)
              \right)
            ^{\sfrac{1}{2}} & (i \ne 0, N), \\
            \det \paren{\deriv{\gamma}{m} \paren{0}, ..., \deriv{\gamma}{m+N-1} \paren{0}} & (i = N).
          \end{cases}
    \end{equation}
    We note that for $i = 1, ..., N-1$, the linear independence of 
    $\gamma^{(m)}(0), ..., \gamma^{(m+i-1)}(0)$ is
    equivalent to $v_{m,i} > 0$. 
    
    \begin{thm}
     \label{thm:smoothness.of.nom.curv.fcn}
     \begin{enumerate}[label={(\Roman*)}]
       \item \label{item:existence.of.sing.curv.}
        For each $i = 1, ..., N-1$, 
        if $\gamma^{(m)}(0), ..., \gamma^{(m+i-1)}(0)$ are linearly independent, 
        then $\mu_{m,i}$ extends continuously to $t = 0$. 
        In particular, it holds that
        \begin{equation}
          \lim_{t \to 0} \mu_{m,i}(t)
          = \frac{i \, (m!)^{1/m}}{m \, \paren{m + i - 1}} \, 
          \frac{v_{m,i-1} \, v_{m,i+1}}{v_{m,1}^{1/m} \, v_{m,i}^2}.
        \end{equation}
        
        \item \label{item:smoothness.of.norm.curv.}
        Let the curve $\gamma$ be analytic.
        \begin{enumerate}[label={(\arabic*)}]
            \item \label{item:smoothness.N-2}
             For each $i = 1, ..., N-2$, if $\mu_{m,i}$ extends continuously to $t = 0$ with 
             $\mu_{m,i}(0) > 0$, then $\mu_{m,i}$ is smooth at $t = 0$.
             
             \item \label{item:smoothness.2,3}
             If $N = 2, 3$ and $\mu_{m,N-1}$ extends continuously to $t = 0$, then $\mu_{m,N-1}$ is smooth at $t = 0$.
             
             \item \label{item:smoothness.ge.4}
             If $N \ge 4$ and $\mu_{m,N-1}$ extends continuously to $t = 0$ with $\mu_{m,N-1}(0) > 0$, then $\mu_{m,N-1}$ is smooth at $t = 0$.

        \end{enumerate}
    \end{enumerate}
    \end{thm}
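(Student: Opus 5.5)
For part \ref{item:existence.of.sing.curv.}, I will work through the associated regular curve $\hat{\gamma}$ and \cref{prop:m.nom.curv.and.dual.curv.}. That proposition gives $\mu_{m,i}(t) = \hat{\kappa}_i(t)\, g(t)^{(m-1)/m}$ for $t \neq 0$, with $g$ smooth and $g(0) = |\hat{\gamma}'(0)|/m$. It therefore suffices to show that $\hat{\kappa}_i$ extends continuously to $t = 0$ and to compute $\hat{\kappa}_i(0)$.

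Let $\hat V_j$ denote the functions in \eqref{eq:def.of.Vi} built from $\hat{\gamma}$. By \eqref{eq:comparison.at.0}, $\hat{\gamma}^{(j)}(0) = c_j\, \gamma^{(m+j-1)}(0)$ with $c_j = (j-1)!/(m+j-2)!$. Since determinants are multilinear, this gives $\hat V_j(0) = \bigl(\prod_{l=1}^{j} c_l\bigr)\, v_{m,j}$. The assumed linear independence of $\gamma^{(m)}(0), \dots, \gamma^{(m+i-1)}(0)$ means $v_{m,i} > 0$, and hence $v_{m,1}, \dots, v_{m,i} > 0$, since these are Gram determinants of sub-tuples. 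So $\hat V_1, \dots, \hat V_i$ are positive near $0$, while $\hat V_{i-1}$ and $\hat V_{i+1}$ are continuous.

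\cref{thm:Fukui.curv.} gives $\hat{\kappa}_i = \hat V_{i-1}\hat V_{i+1}/(\hat V_1 \hat V_i^2)$. This formula still makes sense at $t = 0$, so $\hat{\kappa}_i$ extends continuously; for $i = N-1$ the factor $\hat V_N$ is a determinant, but it is still continuous. The constants at $t=0$ then collapse to
\[
\frac{c_{i+1}}{c_1 c_i} = \frac{i!\,(m+i-3)!}{(m+i-1)!\,(i-1)!} = \frac{i}{(m+i-1)(m+i-2)}.
\]
Here $c_1 = 1/(m-1)!$; I will recheck this against the stated coefficient $i(m!)^{1/m}/(m(m+i-1))$, and I expect the $c_1$ and $g(0)$ factors to recombine correctly. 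For that, $g(0)^{(m-1)/m} = \bigl(\hat V_1(0)/m\bigr)^{(m-1)/m}$, which brings in $v_{m,1}^{(m-1)/m}$ and powers of $(m-1)!$ and $m$. Multiplying by $1/\hat V_1(0)$ leaves $v_{m,1}^{-1/m}$. The remaining factorials should combine to $(m!)^{1/m}/m$ times the ratio of $c$'s. The only real work in part \ref{item:existence.of.sing.curv.} is this bookkeeping.

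For part \ref{item:smoothness.of.norm.curv.}, assume $\gamma$ is analytic. Then $\hat{\gamma}$ is analytic, so each $\hat V_j^2$ and $\hat V_N$ is analytic in $t$. I will write $\hat V_j^2(t) = t^{2k_j} a_j(t)$ with $a_j$ analytic and $a_j(0) > 0$. The exponent is even and the leading coefficient is positive because $\hat V_j^2 \geq 0$ and, by \ref{item:assumption_B}, vanishes only at $t=0$. Then $\hat V_j = |t|^{k_j}\sqrt{a_j}$. Hence, for $i \leq N-2$,
\[
\hat{\kappa}_i = |t|^{k_{i-1}+k_{i+1}-k_1-2k_i} \times (\text{analytic and positive}),
\]
while $\hat V_N = t^{k_N} b(t)$ with $b(0) \neq 0$.

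Continuity of the extension together with a positive limit forces the exponent of $|t|$ to be $0$, so $\hat{\kappa}_i$ is smooth. By \cref{rem:week.conditions.for.smoothness}, or directly since $g$ is smooth and positive, $\mu_{m,i}$ is then smooth; this gives \ref{item:smoothness.N-2}. For $i = N-1$ the same argument applies, with the exponent $e = k_{N-2}+k_N-k_1-2k_{N-1}$ now attached to $t^{k_N}$ and to the sign factors $|t|^{\cdots}$. This is where I expect the main obstacle: relating $|t|^{\cdots}$ and $t^{k_N}$. If the limit is nonzero ($N \geq 4$ hypothesis), the total exponent must vanish and the sign factor must be constant, which gives smoothness.

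For $N = 2, 3$ the limit may be $0$, so I need a parity argument. The plan is to show that, for $N \leq 3$, $k_{N-1}$ and $k_N$ are tied by the analytic structure; for example, for $N = 2$, $\hat V_1$ is nonvanishing and so $k_1 = 0$. As a result $\hat{\kappa}_{N-1}$ is $t^{e}\times$(analytic) with a genuine integer power $t^e$, $e \geq 0$, rather than $|t|^e$. Continuity then makes $\hat{\kappa}_{N-1}$ analytic, and the factor $(\sgn t)^{N(m-1)}$ in \cref{defn:m.nom.curv.fcn} is exactly what absorbs the sign discrepancy between $\kappa_{N-1}$ and $\hat{\kappa}_{N-1}$.
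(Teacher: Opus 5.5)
\textbf{Overall.} Your route is the paper's. You reduce to $\hat{\kappa}_i$ via \cref{prop:m.nom.curv.and.dual.curv.}, evaluate $\hat{V}_j(0)$ through \eqref{eq:comparison.at.0}, and in the analytic case write $\hat{V}_j^2=t^{2k_j}a_j$ and argue on the exponent, adding the parity of $k_{N-2}$ when $N\ge 4$. Two points need repair.

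\textbf{The constant in (I) is miscomputed.} With $c_j=(j-1)!/(m+j-2)!$ one has $c_{i+1}/c_i=i/(m+i-1)$ and $c_1=1/(m-1)!$. Hence
\[
\frac{c_{i+1}}{c_1c_i}=\frac{i\,(m-1)!}{m+i-1},
\]
not $i/((m+i-1)(m+i-2))$; you shifted the factorial in $c_i$ and dropped $c_1$. Only the correct value recombines to the stated formula. Use $\lim\mu_{m,i}=\hat{\kappa}_i(0)\,g(0)^{(m-1)/m}$ with $g(0)=\hat{V}_1(0)/m=v_{m,1}/m!$. Then $\frac{i(m-1)!}{m+i-1}\,(m!)^{-(m-1)/m}=\frac{i\,(m!)^{1/m}}{m(m+i-1)}$ and $v_{m,1}^{(m-1)/m}/v_{m,1}=v_{m,1}^{-1/m}$.

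\textbf{Your plan for (II)(2) misidentifies the mechanism.} Nothing ties $k_{N-1}$ to $k_N$, and no parity argument is involved; the non-negativity of $k_N-2k_{N-1}$ comes from continuity alone. In $\hat{\kappa}_{N-1}=\hat{V}_{N-2}\hat{V}_N/(\hat{V}_1\hat{V}_{N-1}^2)$, the factors $\hat{V}_{N-1}^2$ and $\hat{V}_N$ are analytic with no absolute values. The only possible odd power of $|t|$ comes from $\hat{V}_{N-2}=|t|^{k_{N-2}}\sqrt{a_{N-2}}$. For $N\le 3$ this factor is $\hat{V}_0=1$ or $\hat{V}_1=|\hat{\gamma}'|>0$, since $\hat{\gamma}$ is regular. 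So your observation $k_1=0$ is exactly what is needed, but it enters as $k_{N-2}$, not through $k_{N-1},k_N$.
\begin{itemize}
\item For $N=2$, $\hat{\kappa}_1=\hat{V}_2/\hat{V}_1^3$ is smooth outright.
\item For $N=3$, $\hat{V}_1$ cancels and $\hat{\kappa}_2=\hat{V}_3/\hat{V}_2^2=t^{k_3-2k_2}\,b/a_2$ with $b(0)\neq 0$, or $\hat{V}_3\equiv 0$. Continuity forces $k_3-2k_2\ge 0$, with no need for a nonzero limit.
\end{itemize}
This is the paper's argument. It also explains why the hypothesis $\mu_{m,N-1}(0)>0$ appears only for $N\ge 4$, where $k_{N-2}$ may be odd.
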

    
    \begin{rem}
        
        The converse of \ref{item:existence.of.sing.curv.} does not hold in general. In fact, for $\gamma(t) \coloneqq (t^2, t^4, t^5)$, the second $2$-normalized curvature function has a limit at $t=0$, whereas $\gamma''(0) = (2, 0, 0)$ and $\gamma'''(0) = (0, 0, 0)$ are linearly dependent.

        In addition, the assumption $\mu_{m,N-1}(0) > 0$ in \ref{item:smoothness.ge.4} cannot be dropped. Indeed, the $(N-1)$-th $2$-normalized curvature function of $\gamma(t) \coloneqq (t^2, t^3, ..., t^{N-2}, t^N, t^{N+1}, t^{N+3})$ is expressed as $\mu_{2,N-1}(t) = |t| \phi(t)$ by using some smooth function $\phi$.
    \end{rem}
    
    \begin{proof}[Proof of \cref{thm:smoothness.of.nom.curv.fcn}]
        First, we show \ref{item:existence.of.sing.curv.}. 
        Fix $i = 1, \dots N-1$ and 
        assume that $\gamma^{\paren{m}}(0), \dots, \gamma^{\paren{m+i-1}}(0)$ are linearly independent. Then, \eqref{eq:comparison.at.0} yields 
        \begin{equation}
            \hat{V}_i(0) = \frac{0! \, 1! \cdots (i-1)!}{(m-1)! \, m! \cdots (m+i-2)!}\,v_{m,i} \ne 0,
        \end{equation}
        where $\hat{V}_i$ is the function defined by replacing $\gamma$ with $\hat{\gamma}$ in \cref{eq:def.of.Vi}. This implies
        \begin{equation}
            \lim_{t \rightarrow 0} \hat{\kappa}_i(t) 
            = \frac{\hat{V}_{i-1}(0) \, \hat{V}_{i+1}(0)}{\hat{V}_{1}(0) \, \hat{V}_i(0)}
            = \frac{i (m-1)!}{m + i -1} \frac{v_{m,i-1 } \, v_{m,i+1}}{v_{m,1} \, v_{m,i}^2}.
        \end{equation}
        Therefore, \cref{prop:m.nom.curv.and.dual.curv.} gives the conclusion.

        To show \ref{item:smoothness.N-2}, we fix $i = 1, \dots, N-2$ and assume $\mu_{m,i}$ extends continuously to $t = 0$ with $\mu_{m,i}(0) > 0$.
        Since $\gamma$ is analytic, so is $\hat{V}_i(t)^2$.
        Moreover, because $\hat{V}_i(t)^2$ is positive for all $t \ne 0$, 
        there exist non-negative integers $l_{i-1}, l_i, l_{i+1}$ and  
        smooth functions $\psi_{i-1}, \psi_i, \psi_{i+1} \colon (-\epsilon, \epsilon) \to \R_{>0}$ such that $\hat{V}_j(t)^2 = t^{2l_j}\psi_j(t)$ ($j = i-1, i, i+1$).
        Hence, we have 
        \begin{equation}
            \hat{\kappa}_i(t) = |t|^{l_{i-1}+l_{i+1}-2l_i} \frac{\psi_{i-1}(t) \, \psi_{i+1}(t)}{\hat{V}_1(t) \, \psi_i(t)}.
        \end{equation}
        If $l_{i-1}+l_{i+1}-2l_i < 0$, then $\mu_{m,i}(t) \rightarrow \infty \ (t \rightarrow 0)$. This contradicts the existence of a continuous extension to $t=0$. On the other hand, if $l_{i-1}+l_{i+1}-2l_i > 0$ then $\mu_{m,i}(t) \rightarrow 0 \ (t \rightarrow 0)$. This contradicts $\mu_{m,i}(0) > 0$. Therefore, $l_{i-1}+l_{i+1}-2l_i = 0$, and $\mu_{m,i}(t)$ is smooth at $t=0$. 

        Next, we show \ref{item:smoothness.2,3}. 
        For the case $N = 2$, we discuss the smoothness of $\mu_{m, N-1}$ in \cref{rem:week.conditions.for.smoothness}.
       Let $N =3$ and let $\mu_{m,N-1}(t)$ extend continuously to $t =0$. 
       By 
       the analyticity of $\hat{V}_2(t)^2$ and $\hat{V}_3(t)$,
       there exist non-negative integers $l_2, l_3$, and smooth functions $\psi_2 \colon (-\epsilon, \epsilon) \to \R_{>0}$, $\psi_3 \colon (-\epsilon, \epsilon) \to \R$ such that
        \begin{equation}
            \hat{V}_2(t)^2 = t^{2l_2} \, \psi_2(t), \quad 
            \hat{V}_3(t) = t^{l_3} \, \psi_3(t), \quad \psi_3(0) \ne 0.
        \end{equation}
        Hence the $(N-1)$-th curvature function $\hat{\kappa}_{N-1}(t)$ of $\hat{\gamma}(t)$ takes the form
        \begin{equation}
            \hat{\kappa}_{N-1}(t) = \frac{\hat{V}_3(t)}{\hat{V}_2(t)^2} = t^{l_3-2l_2}\frac{\psi_3(t)}{\psi_2(t)}.
        \end{equation}
        The same argument as in the proof of \ref{item:smoothness.N-2} yields $l_3 - 2l_2 \ge 0$, and thus $\mu_{m,N-1}(t)$ is smooth at $t=0$.

        Let us show \ref{item:smoothness.ge.4}. 
        The analyticity of $\hat{V}_{N - 2}(t)^2, \hat{V}_{N-1}(t)^2$, and $\hat{V}_N(t)$
        implies that there exist non-negative integers $l_{i}$ ($i= N-2, N-1, N$) and smooth functions $\psi_i \colon (-\epsilon, \epsilon) \to \R_{>0}$ ($i= N-2, N-1$), $\psi_N \colon (-\epsilon, \epsilon) \to \R$ such that
        \begin{align}
            \hat{V}_i(t)^2 &= t^{2l_i}\, \psi_i(t) \quad (i = N-2, N-1), \\
            \hat{V}_N(t) & = t^{l_N} \, \psi_N(t), \quad \psi_N(0) \ne 0.
        \end{align}
        Using the smooth function $g$ obtained by \cref{prop:m.nom.curv.and.dual.curv.}, we can write $\mu_{m,N-1}$ as 
        \begin{equation}
            \mu_{m, N-1}(t) = \paren{\sgn t}^{l_{N-2}} \, t^{l_{N-2}+l_N-2l_{N-1}} \, \frac{\sqrt{\psi_{N-2}(t)} \, \psi_N(t)}{\hat{V}_1(t) \, \psi_{N-1}(t)} \, g(t)^{\frac{m-1}{m}}.
        \end{equation}
        The same argument as in the proof of \ref{item:smoothness.N-2} yields $l_{N-2} + l_N - 2l_{N-1} = 0$. If $l_{N-2}$ is an odd number, then 
        \begin{equation}
            \lim_{t \rightarrow +0} \mu_{m,N-1}(t) \ne \lim_{t \rightarrow -0} \mu_{m,N-1}(t).
        \end{equation}
        Therefore, $l_{N-2}$ must be even, and thus $\mu_{m,N-1}(t)$ is smooth at $t=0$.
    \end{proof}

     \begin{defn}
        \label{defn:kappa.m.i}
         For fixed $i = 1, ..., N-1$, let $\gamma^{(m)}(0), ..., \gamma^{(m+i-1)}(0)$ be linearly independent. Then we call
         \begin{equation}
             \sigma_{m,i} \coloneqq
             \frac{v_{m,i-1} \, v_{m,i+1}}{v_{m,1}^{1/m} \, v_{m,i}^2}
         \end{equation}
         the \emph{$i$-th $m$-singular curvature}.
     \end{defn}
     
    \noindent
    By \ref{item:existence.of.sing.curv.} in \cref{thm:smoothness.of.nom.curv.fcn}, we know that the same invariance under parameter changes and isometries of $\R^N$ as those of the $m$-normalized curvature functions holds for $\sigma_{m,1}, ..., \sigma_{m,N-1}$.

    When $N = m = 2$, the first $2$-singular curvature $\sigma_{2, 1}$ is equal to the cuspidal curvature introduced in \cite{Umehara_2011_simplification}. 
    Moreover, the statement of \ref{item:existence.of.sing.curv.} in \cref{thm:smoothness.of.nom.curv.fcn} is a generalization of the relation between the normalized curvature function 
    and the cuspidal curvature given in \cite{Shiba_Umehara_2012_curvature_functions}.
    In the case of $N = 2$ and $m \ge 2$, the first $m$-singular curvature $\sigma_{m, 1}$ of the $(m, m + 1)$-cusp coincides with the $(m, m + 1)$-cuspidal curvature in 
    \cite{Martins_Saji_Santos_Teramoto_2024_bdd_geom_inv}.

    The existence and uniqueness of curves with a singularity of finite multiplicity are shown in \nscite{Fukui_2017_multiplicities}. Reinterpreting this result in terms of the $m$-normalized curvature functions and the $1/m$-arclength parameter, we obtain the following.
    \begin{thm}
    \label{thm:existence.uniqueness.with.sing.}
        Let $N \in \geZ{2}$ and $\epsilon >0$. 
        If $\mu_1(\tau), \dots, \mu_{N-2}(\tau) \colon (-\epsilon,\epsilon) \to \poR$ and $\mu_{N-1}(\tau) : (-\epsilon,\epsilon) \to \R$ are smooth functions, then there exists a curve $\gamma(\tau) : (-\epsilon,\epsilon) \to \R^N$ of multiplicity $m$ at $\tau = 0$ satisfying the following:
        \begin{itemize}
            \item $\tau$ is the $1/m$-arclength parameter of $\gamma$.
            \item $\gamma^{(1)}(\tau), ..., \gamma^{(N-1)}(\tau)$ are linearly independent for all $\tau \in (-\epsilon,0) \cup (0, \epsilon)$.
            \item The $m$-normalized curvature functions of $\gamma$ are smooth 
            at $\tau = 0$, and
            $\mu_{i}$ coincides with the $i$-th 
            $m$-normalized curvature function 
            $(i = 1, ..., N-1)$.
        \end{itemize}
        Moreover, such a curve is unique up to an orientation-preserving isometry of $\R^N$.
    \end{thm}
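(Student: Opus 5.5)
The plan is to reduce the statement to \cref{fact:fundamental.reg.curve}, applied to the associated regular curve, using \cref{prop:m.nom.curv.and.dual.curv.}.

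\textbf{Existence.} Given smooth $\mu_1,\dots,\mu_{N-1}$ on $(-\epsilon,\epsilon)$ with $\mu_1,\dots,\mu_{N-2}>0$, we want a regular curve $\hat{\gamma}$ whose speed is $m$ and whose curvatures are $\hat{\kappa}_i=\mu_i$. Rescaling arclength $\sigma = m\tau$ shows that prescribing curvatures $\mu_i(\sigma/m)$ in the arclength $\sigma$ is equivalent: the Frenet equation \cref{eq:Frenet} in $\sigma$ becomes $\frac{d}{d\tau}(e_1,\dots,e_N)=m(e_1,\dots,e_N)K$. \cref{fact:fundamental.reg.curve} then produces a regular curve $\hat{\gamma}(\tau)$ with $\abs{\hat{\gamma}'}=m$, with $\hat{\gamma}^{(1)},\dots,\hat{\gamma}^{(N-1)}$ linearly independent on the whole interval, and with curvatures $\mu_i(\tau)$. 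Define
\begin{equation}
  \gamma(\tau)\coloneqq\int_0^\tau v^{m-1}\,\hat{\gamma}'(v)\,dv .
\end{equation}
\cref{fact:division.lem.general.order} (with $\alpha = m-1$) gives $\gamma(\tau)=\frac{\tau^m}{m}\overline{\gamma}(\tau)$ with $\overline{\gamma}(0)=\hat{\gamma}'(0)\neq\bzero$, so $\gamma$ has multiplicity $m$. Moreover $\abs{\gamma'}=m\abs{\tau}^{m-1}$, so $\tau$ is the $1/m$-arclength parameter, and the associated regular curve of $\gamma$ is $\hat{\gamma}$ (up to the translation $\hat{\gamma}(0)$, which we normalize to $\bzero$). \cref{eq:transf.gamma.to.hatgamma} shows that $\gamma^{(1)},\dots,\gamma^{(N-1)}$ are independent for $\tau\neq0$. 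Then \cref{prop:m.nom.curv.and.dual.curv.} (the equality \cref{eq:m.nom.curv.fcn.and.dual.curv.}) gives $\mu_{m,i}(\tau)=\hat{\kappa}_i(\tau)=\mu_i(\tau)$ for $\tau\ne0$. So the normalized curvatures extend smoothly to $0$, and they coincide with $\mu_i$.

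\textbf{Uniqueness.} Let $\gamma_1,\gamma_2$ be two such curves. Their associated regular curves $\hat{\gamma}_1,\hat{\gamma}_2$ have speed $m$. By \cref{prop:m.nom.curv.and.dual.curv.} their curvatures agree with $\mu_i$ on $\tau\neq0$, hence everywhere by continuity wherever defined. We then need uniqueness in \cref{fact:fundamental.reg.curve} on all of $(-\epsilon,\epsilon)$, which is the main obstacle: the hypothesis only guarantees independence of $\hat{\gamma}_j^{(1)},\dots,\hat{\gamma}_j^{(N-1)}$ for $\tau\ne0$.

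I would handle this by applying \cref{fact:fundamental.reg.curve} separately on $(0,\epsilon)$ and $(-\epsilon,0)$. This gives orientation-preserving isometries $A_\pm$ with $\hat{\gamma}_2=A_\pm\circ\hat{\gamma}_1$ on each half. It remains to show $A_+=A_-$. For this I would use that the Frenet frames $(e_1,\dots,e_N)$ of $\hat{\gamma}_j$ extend smoothly across $0$ as solutions of the linear ODE with smooth coefficient $mK(\mu)$. Indeed, the $\mu_i$ are smooth on the whole interval and positivity of $\mu_1,\dots,\mu_{N-2}$ at $0$ keeps the frame defined through the Gram--Schmidt construction. Linear ODE uniqueness through $\tau=0$ then forces the frame, and hence $\hat{\gamma}_2$, to be $A\circ\hat{\gamma}_1$ for a single isometry $A$; if this turns out delicate, one argues directly by continuity of $\hat{\gamma}_j'$ and its derivatives at $0$.

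Finally, $\gamma_j'=\tau^{m-1}\hat{\gamma}_j'$ and $\gamma_j(0)=\bzero$. Integrating then transfers the relation to $\gamma_2=A\circ\gamma_1$, with $A$ an orientation-preserving isometry, which completes the uniqueness.
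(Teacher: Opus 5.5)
Your existence argument is the paper's, up to a harmless variant. You reparametrize by $\sigma=m\tau$, whereas the paper prescribes curvatures $m\mu_i$ in arclength and then scales the curve by $m$. One small slip: \cref{fact:division.lem.general.order} is stated for the weight $|v|^{m-1}$, so for even $m$ it does not literally give $\gamma(\tau)=\frac{\tau^m}{m}\overline{\gamma}(\tau)$. The substitution $v=\tau u$, giving $\gamma(\tau)=\tau^m\int_0^1u^{m-1}\hat{\gamma}'(\tau u)\,du$, does.

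For uniqueness you are more careful than the paper, whose one-line appeal to \cref{fact:fundamental.reg.curve} tacitly needs $\hat{\gamma}^{(1)}(0),\dots,\hat{\gamma}^{(N-1)}(0)$ to be independent. The hypotheses only give independence for $\tau\neq0$, and you correctly flag this. But your resolution stalls at exactly that point. Extending each one-sided Frenet frame as a solution of $E'=mEK(\mu)$ does not identify the two frames. The extension from $\tau>0$ and the one from $\tau<0$ solve the same linear equation, so they differ by a constant left factor $P\in\mathrm{SO}(N)$, and $P=I$ is just $A_+=A_-$ restated. Your sentence that positivity of $\mu_1,\dots,\mu_{N-2}$ ``keeps the frame defined through Gram--Schmidt'' is the whole content of the step, and it is asserted, not shown.

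The claim is true, and the missing argument is short. On $(0,\epsilon)$ the Frenet equations give
\begin{equation}
(\hat{\gamma}^{(1)},\dots,\hat{\gamma}^{(N-1)})=(e_1,\dots,e_{N-1})\,U(\tau).
\end{equation}
Here $U$ is upper triangular with diagonal entries $m^k\mu_1\cdots\mu_{k-1}$, and all its entries are polynomials in $\mu_1,\dots,\mu_{N-2}$ and their derivatives. Hence $U$ is smooth on all of $(-\epsilon,\epsilon)$ and $U(0)$ is invertible. Letting $\tau\to0^+$ gives $(e_1,\dots,e_{N-1})\to(\hat{\gamma}^{(1)}(0),\dots,\hat{\gamma}^{(N-1)}(0))\,U(0)^{-1}$, whose columns are orthonormal. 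So $\hat{\gamma}^{(1)}(0),\dots,\hat{\gamma}^{(N-1)}(0)$ are independent, and \cref{fact:fundamental.reg.curve} applies on the whole interval; the splitting becomes unnecessary.

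If you prefer to keep the splitting, take limits of $\hat{\gamma}_2^{(k)}=L_\pm\hat{\gamma}_1^{(k)}$ at $0$. This shows the linear parts $L_+$ and $L_-$ agree on the hyperplane spanned by the $\hat{\gamma}_1^{(k)}(0)$, and $\det L_\pm=1$ then forces $L_+=L_-$. Your final integration step, $\gamma_2=L\gamma_1$, is fine.
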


    \begin{rem}
        Setting $N = 2$ in \cref{thm:existence.uniqueness.with.sing.} reduces to Proposition 3.2 in \cite{Martins_Saji_Santos_Teramoto_2024_bdd_geom_inv}. 
        In this case, the resulting curve $\gamma(\tau)$ is given by
        \begin{equation}
            \gamma(\tau) = 
            \int_0^{\tau} m \, v^{m-1} \paren{\cos\theta(v), \sin\theta(v)} \, dv, \quad
            \theta(\tau) = 
            \int_0^{\tau} m \, \mu_1(v) \, dv.
        \end{equation}
        Furthermore, the unit normal vector field of $\gamma(\tau)$ is given by
        $(-\sin \theta(\tau), \cos\theta(\tau))$.
    \end{rem}
    
    \begin{proof}[Proof of \cref{thm:existence.uniqueness.with.sing.}]
        By \cref{fact:fundamental.reg.curve}, there exists a regular curve $\gamma_1(\tau) : (-\epsilon, \epsilon) \to \R^N$ such that
        \begin{itemize}
            \item $\tau$ is the arclength parameter of $\gamma_1$,
            \item $\gamma_1^{(1)}(\tau), ..., \gamma_1^{(N-1)}(\tau)$ are linearly independent for all $\tau \in (-\epsilon, \epsilon)$, and
            \item $m\mu_{i}(\tau)$ is the $i$-th curvature function of $\gamma_1(\tau)$ ($i = 1, ..., N-1$).
        \end{itemize}
        We set $\gamma_2 = m \gamma_1$. Then $\gamma_2$ is a regular curve such that
        \begin{itemize}
            \item $\left|\dfrac{d\gamma_2}{d\tau}\right| = m$,
            \item $\gamma_2^{(1)}(\tau), ..., \gamma_2^{(N-1)}(\tau)$ are linearly independent for all $\tau \in (-\epsilon, \epsilon)$, and
            \item $\mu_{i}(\tau)$ is the $i$-th curvature function of $\gamma_2(\tau)$ ($i = 1, ..., N-1$).
        \end{itemize}
        Let $\bm{T} \coloneqq d\gamma_2 / d\tau$. 
        The desired curve $\gamma(\tau) : (-\epsilon, \epsilon) \to \R^N$ is given by
        \begin{equation}
            \gamma(\tau) \coloneqq \int_0^{\tau}v^{m-1}\, \bm{T}(v) \, dv.
        \end{equation}
        It is clear that $\gamma(\tau)$ is a curve of multiplicity $m$ at $\tau = 0$ parameterized by the $1/m$-arclength parameter $\tau$, and its associated regular curve is $\gamma_2$. 
        Thus,
        the linear independence of $\gamma_2^{(1)}(0), ..., \gamma_2^{(N-1)}(0)$ and \cref{rem:week.conditions.for.smoothness}
        guarantee the smoothness of the $m$-normalized curvatures $\mu_{m,1}(\tau), ..., \mu_{m,N-1}(\tau)$ of $\gamma(\tau)$ at $\tau = 0$.
        Moreover, \cref{prop:m.nom.curv.and.dual.curv.} implies that 
        \begin{equation}
            \mu_{m,i}(\tau) = \mu_{i}(\tau) \quad (i = 1, ..., N-1)
        \end{equation}
        for all $\tau \ne 0$. Since both $\mu_{i,m}$ and $\mu_{i}$ are continuous, they also coincide at $\tau = 0$.
        The uniqueness of $\gamma$ follows from that of $\gamma_1$.
    \end{proof}

\bibliographystyle{alpha}   

\begin{thebibliography}{99}
  \bibitem{BG82_simple_sing}
    J.~W. Bruce and T.~Gaffney, Simple singularities of mappings ${\bf C},0\rightarrow {\bf C}\sp{2},0$, J. London Math. Soc. (2) \textbf{26} (1982), 465--474.
  \bibitem{Bruce_1987_determinacy_and_unipotency}
    J.~W.~Bruce, A.~A.~du~Plessis, and C.~T.~C.~Wall, Determinacy and unipotency, Inventiones mathematicae \textbf{88} (1987), 521--554.
  \bibitem{Fukui_2017_multiplicities}
    T.~Fukui, Local differential geometry of singular curves with finite multiplicities, Saitama Mathematical Journal \textbf{31} (2017), 79--88.
  \bibitem{Fukui_Hoshino_curvature_criteria}
    T.~Fukui and S.~Hoshino,
    Curvature criteria of $\mathcal A$-simple singularities 
    $\mathbb R, 0 \to \mathbb R^2, 0$ and their parallel curves,
    2025, arXiv:\href{https://arxiv.org/abs/2512.23293}{2512.23293}.
  \bibitem{Gluck_2024_higher_curvatures}
    H.~Gluck, Higher Curvatures of Curves in Euclidean Space, The American Mathematical Monthly \textbf{73} (1966), 699--704.
  \bibitem{Hattori_Honda_Morimoto_2024_Bours_theorem_singularities_arXiv}
    Y.~Hattori, A.~Honda and T.~Morimoto, Bour's theorem for helicoidal surfaces with singularities, Differential Geom. Appl. \textbf{99} (2025).
  \bibitem{Honda_Saji_25_invariant}
    A.~Honda and K.~Saji, Geometric invariants of $5/2$-cuspidal edges, Kodai Math. J. \textbf{42} (2019), 496--525.
  \bibitem{Liu_Xin_2023_Frobenius_numbers}
    F.~Liu and G.~Xin, A combinatorial approach to Frobenius numbers of some special sequences, Adv. in Appl. Math. \textbf{158} (2024).
  \bibitem{Matsushita_2024_classifications_4_5_cusps_arXiv}
    Y.~Matsushita, Classifications of cusps appearing on plane curves, 2024, arXiv:\href{https://arxiv.org/abs/2402.12166}{2402.12166}.
  \bibitem{Mishkov_2000_Generalization_Bruno}
    R.~L.~Mishkov, Generalization of the formula of Faa di Bruno for a composite function with a vector argument, International Journal of Mathematics and Mathematical Sciences \textbf{24} (2000), 481--491.
  \bibitem{Martins_Saji_Santos_Teramoto_2019_singular_surfaces}
    L.~F.~Martins, K.~Saji, S.~P.~dos~Santos, and K.~Teramoto, Singular surfaces of revolution with prescribed unbounded mean curvature, Anais da Academia Brasileira de Ci{\^e}ncias \textbf{91} (2019).
  \bibitem{Martins_Saji_Santos_Teramoto_2024_bdd_geom_inv}
    L.~F.~Martins, K.~Saji, S.~P.~dos~Santos, and K.~Teramoto, Boundedness of geometric invariants near a singularity which is a suspension of a singular curve, Rev. Un. Mat. Argentina \textbf{67} (2024), 475--502.
  \bibitem{Okano_2001_Bruno_applications}
      T.~Okano, Y.~Okuto, A.~Shimizu, Y.~Niikura, Y.~Hashimoto, and H.~Yamada, The formula of {Faà di Bruno} and its applications I, Annual review (Nagoya City University), in Japanese, \textbf{5} (2001), 35--44.
  \bibitem{Porteous_2001_curves_and_surfaces}
    I.~R.~Porteous, Geometric differentiation: for the intelligence of curves and surfaces, Cambridge University Press, 1994.
  \bibitem{Shiba_Umehara_2012_curvature_functions}
    S.~Shiba and M.~Umehara, The behavior of curvature functions at cusps and inflection points, Differential Geometry and its Applications \textbf{30} (2012), 285--299.
  \bibitem{Subwattanachai_2024_generalized_Frobenius_number}
    K.~Subwattanachai, Generalized {Frobenius} number of three variables, 2024, arXiv:\href{https://arxiv.org/abs/2309.09149v2}{2309.09149v2}.
  \bibitem{Sulanke_2020_funda_thm_curve}
    R.~Sulanke, The fundamental theorem for curves in the $n$-dimensional Euclidean space, 2020, \url{https://www2.mathematik.hu-berlin.de/~sulanke/diffgeo/euklid/ECTh.pdf}.
  \bibitem{Sylvester_1857_partition_of_numbers}
    J.~J.~Sylvester, On the partition of numbers, Quarterly Journal of Pure and Applied Mathematics \textbf{1} (1857), 141--152.
  \bibitem{Sylvester_1882_subvariants}
    J.~J.~Sylvester, On subvariants, i.e., semi-invariants to binary quantics of an unlimited order, American Journal of Mathematics \textbf{5} (1882), 119--136.
  \bibitem{Sylvester_1884_mathematical_questions}
    J.~J.~Sylvester, Mathematical questions with their solutions, Educational Times \textbf{41} (1884), 21.
  \bibitem{Umehara_2011_simplification}
    M.~Umehara, A simplification of the proof of Bol's conjecture on sextactic points, Proceedings of the Japan Academy, Series A, Mathematical Sciences \textbf{87} (2011), 10--12.
  \bibitem{Saji_Umehara_Yamada_2022_singularities}
    M.~Umehara, K.~Saji and K.~Yamada, Differential geometry of curves and surfaces with singularities, translated from the 2017 Japanese original by Wayne Rossman, 
    Series in Algebraic and Differential Geometry, 1, World Sci. Publ., Hackensack, NJ, 2022.
  \bibitem{Whitney_1943_Diff_even_fcn}
    H.~Whitney, Differentiable even functions, Duke Mathematical Journal \textbf{10} (1943), 159--160.
  \bibitem{Zhang_Pei_oprik}
    C.~Zhang and D.~Pei, Evolutes of $(n, m)$-cusp curves and application in optical system, Optik, \textbf{162} (2018), 42--53.
\end{thebibliography}

\end{document}